\tikzstyle{every node}=[circle, draw, fill=black!0, inner sep=1pt, minimum width=4pt]
\tikzstyle{label node}=[draw=none,fill=none]
\newcommand{\al}[1]{
  \begin{align}
  #1
  \end{align}
}
\def\AND{\text{ and }}
\def\OR{\text{ or }}
\def\LRG{\mathbb{L}}
\def\SML{\mathbb{S}}
\newcommand{\ind}{\mathbf{1}}
\newcommand{\E}[1]{\mathbb{E} \left[#1\right]}
\newcommand{\Bin}[2]{\text{Bin}\left(#1, #2\right)}
\newcommand{\Var}[1]{\text{Var}\left(#1\right)}
\newcommand{\fl}[1]{\left\lfloor #1\right\rfloor}
\newcommand{\cl}[1]{\left\lceil #1\right\rceil}
\def\dist{\mathrm{dist}}
\def\alt{\mathrm{alt}}
\def\mix{\mathrm{mix}}
\def\odd{\mathrm{odd}}
\def\Po{\mathrm{Po}}
\def\rot{\mathrm{rot}}
\def\AW{\mathrm{AW}}
\def\BW{\mathrm{BW}}
\def\RM{\mathrm{RM}}
\def\EP{\mathrm{EP}}
\def\IP{\mathrm{IP}}
\def\SE{\mathrm{SE}}
\def\XY{\mathrm{XY}}
\def\AA{\mathcal{A}}
\def\BB{\mathcal{B}}
\def\CC{\mathcal{C}}
\def\DD{\mathcal{D}}
\def\EE{\mathcal{E}}
\def\FF{\mathcal{F}}
\def\GG{\mathcal{G}}
\def\LL{\mathcal{L}}
\def\MM{\mathcal{M}}
\def\QQ{\mathcal{Q}}
\def\RR{\mathcal{R}}
\def\TT{\mathcal{T}}
\def\WW{\mathcal{W}}
\def\XX{\mathcal{X}}
\def\YY{\mathcal{Y}}
\def\ol{\overline}
\def\wh{\widehat}
\def\R{\mathbb{R}}
\def\a{\alpha}
\def\b{\beta}
\def\d{\delta}
\def\D{\Delta}
\def\e{\varepsilon}
\def\f{\phi}
\def\g{\gamma}
\def\G{\Gamma}
\def\k{\kappa}
\def\th{\theta}
\def\lam{\lambda}
\def\m{\mu}
\def\r{\rho}
\def\s{\sigma}
\def\t{\tau}
\def\OM{\Omega}
\newcommand\Prob[1]{{\mbox{Pr}\left\{#1\right\}}}
\newtheorem{theorem}{Theorem}
\numberwithin{theorem}{section}
\newtheorem{proposition}[theorem]{Proposition}
\newtheorem{lemma}[theorem]{Lemma}
\newtheorem*{lemma*}{Lemma}
\newtheorem*{theorem*}{Theorem}
\newtheorem{definition}[theorem]{Definition}
\newtheorem{claim}{Claim}
\numberwithin{claim}{section}
\newcommand{\bfrac}[2]{\left({\frac{#1}{#2}}\right)}
\title{Hamilton cycles in weighted Erd\H{o}s-R\'enyi graphs}
\author{Tony Johansson\thanks{Funded by the Swedish Research Council (grant 2015-05015).} \\[.2cm]
  {\small Stockholm University} \\
  {\small Stockholm, Sweden}
}
\begin{document}
\maketitle

\begin{abstract}
Given a symmetric $n\times n$ matrix $P$ with $0 \le P(u, v)\le 1$, we define a random graph $G_{n, P}$ on $[n]$ by independently including any edge $\{u, v\}$ with probability $P(u, v)$. For $k\ge 1$ let $\AA_k$ be the property of containing $\fl{k/2}$ Hamilton cycles, and one perfect matching if $k$ is odd, all edge-disjoint. With an eigenvalue condition on $P$, and conditions on its row sums, $G_{n, P}\in \AA_k$ happens with high probability if and only if $G_{n, P}$ has minimum degree $k$ whp. We also provide a hitting time version. As a special case, the random graph process on pseudorandom $(n, d, \m)$-graphs with $\m \le d(d/n)^\a$ for some constant $\a > 0$ has property $\AA_k$ as soon as it acquires minimum degree $k$ with high probability.
\end{abstract}


\section{Introduction}

The problem of determining whether a random graph contains a Hamilton cycle, i.e. a cycle passing through every vertex exactly once, dates back to the inception of the study of random graphs. In 1960, Erd\H{o}s and R\'enyi asked whether their eponymous random graph $G_{n, p}$, obtained by including any edge independently with probability $p$, contains a Hamilton path \cite{ErdosRenyi60}. The problem was settled by Koml\'os and Szemer\'edi \cite{KomlosSzemeredi}, who showed that
$$
\lim \Prob{G_{n, p} \text{ is Hamiltonian}} = \lim \Prob{\d(G_{n, p})\ge 2},
$$
where $\d$ denotes the minimum degree of a graph. This was strengthened to a hitting time result, independently by Bollob\'as \cite{bollobas84} and by Ajtai, Koml\'os and Szemer\'edi \cite{AjtaiKomlosSzemeredi}, stated as follows. Suppose $G_{n, m}$ is an increasing sequence of random graphs, where $G_{n, m}$ is obtained by adding a uniformly chosen edge to $G_{n, m-1}$. Let $\t_2$ be the smallest $m$ for which $\d(G_{n, m}) \ge 2$. Then with high probability, $G_{n, \t_2}$ is Hamiltonian. For integers $k\ge 1$, let $\AA_k$ denote the graph property of containing $\fl{k/2}$ Hamilton cycle, as well as a matching of size $\fl{n/2}$ when $k$ is odd. Bollob\'as and Frieze~\cite{BollobasFrieze} strengthened the hitting time result above to showing that $G_{n, \t_k}\in \AA_k$ with high probability. For a more thorough history, see Frieze's recent survey on Hamilton cycles~\cite{Frieze19}.

In recent years, some attention has been turned to random subgraphs of a host graph $\G_n$. The random subgraph $\G_{n, p}$ is obtained by including any edge of $\G_n$ independently with probability $p$, and a the random graph process $\G_{n, m}$ on $\G_n$ is obtained by ordering the edges of $\G_n$ uniformly at random. An early example is the random bipartite graph $G_{n, n, p}$, obtained by letting $\G_n = K_{n, n}$. Frieze~\cite{Frieze85} determined the threshold in this case. Frieze and Krivelevich~\cite{FriezeKrivelevich02} showed that if $\G_n$ is in a certain class of pseudorandom graphs (specified below) then $\G_{n, \t_2} \in \AA_2$ whp. The author~\cite{Johansson20} showed that the same holds when $\d(\G_n) \ge (1/2+\e)n$ for some constant $\e > 0$ (and that it need not hold when $\d(\G_n) = n/2$). Alon and Krivelevich~\cite{AlonKrivelevich20} showed that $\G_{n, \t_{2k}} \in \AA_{2k}$ whp for any $k=O(1)$ in three dense classes of host graphs, which include some pseudorandom graphs and graphs with $\d(\G_n) \ge (1/2+\e)n$. 

Both \cite{FriezeKrivelevich02} and \cite{AlonKrivelevich20} consider pseudorandom graphs known as $(n, d, \m)$-graphs. A graph $\G$ is an $(n,d,\m)$-graph if it has $n$ vertices, every vertex has degree $d$, and the second largest eigenvalue of its adjacency matrix is at most $\m$ in absolute value. We strengthen both results in the following special case of our main theorem. Let $\t_{\AA_k}$ be the smallest $m$ for which $\G_{n, m}\in \AA_k$.
\begin{theorem*}[Theorem~\ref{thm:1}, pseudorandom graph case]
Let $k = O(1)$. Suppose $\G_n$ is an $(n, d, \m)$-graph with $\m \le d(d/n)^\a$ for some constant $\a > 0$. Then the random graph process on $\G_n$ has $\t_{\AA_k} = \t_k$ with high probability.
\end{theorem*}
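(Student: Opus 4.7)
The direction $\tau_{\AA_k} \ge \tau_k$ is deterministic: membership in $\AA_k$ forces $\d \ge k$, so the process cannot enter $\AA_k$ before acquiring minimum degree $k$. For the reverse inequality, the plan is to deduce the result from Theorem~\ref{thm:1} by realizing $\G_{n,p}$ inside the $G_{n,P}$ framework.

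Given an $(n,d,\m)$-graph $\G_n$ with $\m \le d(d/n)^\a$, I would set $P = p \cdot A(\G_n)$, where $A(\G_n)$ is the adjacency matrix, so that $G_{n,P}$ is distributed as $\G_{n,p}$. The two hypotheses of Theorem~\ref{thm:1} then translate cleanly to this $P$. Every row of $P$ sums to exactly $pd$ because $\G_n$ is $d$-regular, so the row-sum condition is immediate and sharp. The eigenvalues of $P$ are $p$ times those of $A(\G_n)$, so $\lam_1(P) = pd$ while $|\lam_i(P)| \le p\m$ for $i \ge 2$, giving a spectral ratio $\lam_2(P)/\lam_1(P) \le \m/d \le (d/n)^\a$ that is polynomially small in $n/d$. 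Invoking Theorem~\ref{thm:1} then yields $G_{n,P}\in \AA_k$ whp exactly when $\d(G_{n,P}) \ge k$ whp, and the hitting-time version transfers to $\G_{n,m}$ via the standard coupling between the edge-probability and edge-count models, delivering $\tau_{\AA_k}=\tau_k$ whp.

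The main obstacle is to confirm that the bound $\m \le d(d/n)^\a$ fits the quantitative eigenvalue hypothesis of Theorem~\ref{thm:1} for all $p$ in the critical window $pd \approx \log n + (k-1)\log\log n + O(1)$. Since Theorem~\ref{thm:1} is stated with this corollary in mind, this should amount to careful bookkeeping: checking that whatever spectral-gap condition Theorem~\ref{thm:1} demands, stated in terms of the common row sum $pd$, is satisfied by $p\m \le pd \cdot (d/n)^\a$ across the relevant $p$. This is where the constant $\a > 0$ earns its keep, since $(d/n)^\a$ is the slack that must dominate any polylogarithmic factors coming from the threshold window, and it also implicitly forces $d$ to be large enough relative to $n$ for the random subgraph $\G_{n,p}$ to reach minimum degree $k$ at all.
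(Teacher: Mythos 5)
Your core idea---realize the process on $\G_n$ inside the paper's framework by feeding the (scaled) adjacency matrix into the main theorem---is exactly the paper's route. But two steps of your write-up do not go through as stated. First, the hitting-time transfer. Theorem~\ref{thm:1} is already a hitting-time statement about the \emph{process} $G_{n,R}(t)$, so the correct move is to set $R = A(\G_n)$ (a rate matrix) and observe that the resulting continuous-time process orders the edges of $\G_n$ uniformly at random, hence visits the same sequence of graphs as $\G_{n,m}$; the conclusion $\t_{\AA_k}=\t_k$ then transfers verbatim. Your version instead fixes $p$, applies the theorem to $P = pA(\G_n)$ to get ``$G_{n,P}\in\AA_k$ whp iff $\d(G_{n,P})\ge k$ whp'' (which is really Theorem~\ref{thm:pi}, not Theorem~\ref{thm:1}), and then claims a ``standard coupling'' upgrades this to $\t_{\AA_k}=\t_k$. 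That last step is a genuine gap: a per-$p$ equivalence of the two properties yields only a threshold statement (the two thresholds lie in the same window), not the hitting-time identity, and closing that gap is precisely the hard content of hitting-time theorems (cf.\ the two-round exposure argument in Section~\ref{sec:highlevel}). No routine coupling does it for you.

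Second, the verification that $R\in\RM$ is incomplete. For a $d$-regular $\G_n$ the transition matrix is $M = A/d$, so $\|M\| = 1/d$, the stationary distribution is uniform (hence condition~(\ref{item:powerlaw}) holds with $\a'=0$, $b=1$), and $\lam(R) = \m/d \le (d/n)^\a = (n\|M\|)^{-\a}$, so the eigenvalue condition \eqref{eq:lamcond} holds with exponent $0+\a$. Note this condition is scale-invariant---it concerns $M$, not $P$---so your worry about the critical window of $p$ is moot; $p$ never enters. What you do not check is condition~(c), $\|R\| \le d n^{-\g}$: for the adjacency matrix $\|R\|=1$, so this demands $d \ge n^{\g}$. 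This is where the implicit degree bound comes from: since $\m = \OM(d^{1/2})$ for $(n,d,\m)$-graphs, the hypothesis $\m \le d(d/n)^\a$ forces $d = n^{\OM(1)}$, which is what makes condition~(c) satisfiable. Your closing remark attributes the implicit largeness of $d$ to $\G_{n,p}$ needing to reach minimum degree $k$, which is not the operative reason.
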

In~\cite{FriezeKrivelevich02} it was required that $\m = o(d^{5/2} / (n\ln n)^{3/2})$, which only holds if $d \gg n^{3/4}(\ln n)^3$, while~\cite{AlonKrivelevich20} asked that $\m = O(d^2/n)$ and $d = \OM(n(\ln\ln n) / \ln n)$. This result strengthens both, with an implicit degree bound of $d = n^{\OM(1)}$ owing to the fact that $\m = \OM(d^{1/2})$ (see e.g.~\cite{Nilli91}).

Our full result concerns a more general inhomogeneous random graph. Suppose $P$ is a symmetric $n\times n$ matrix with entries $P(u, v)\in [0, 1]$. We then define a random graph $G_{n, P}$ by independently including each edge $\{u, v\}$ with probability $P(u, v)$.  If $R$ is a symmetric $n\times n$ matrix with $R(u, v) \ge 0$ for all $uv$, we also define a random graph process $G_{n, R}(t)$ as follows. Each pair $\{u, v\}$ is independently assigned a random value $E(u, v)$, exponentially distributed with rate $R(u, v)$, taken to equal $\infty$ if $R(u, v) = 0$. We let
$$
G_{n, R}(t) = ([n], \{uv : E(u, v) \le t\}).
$$
Note that $G_{n, R}(t)$ equals $G_{n, P}$ in distribution when $P(u, v) = 1-e^{-R(u, v)t}$. Note that this framework generalizes $\G_{n, p}$ and $\G_{n, m}$ (now in continuous time), re-obtained by letting $R$ be the adjacency matrix of $\G_n$. Anastos, Frieze and Gao~\cite{AnastosFriezeGao19} considered Hamiltonicity in the stochastic block model, which is $G_{n, P}$ with $P$ in a specific class of block matrices.

For vertex sets $A, B$ we let $R(A, B) = \sum_{u\in A, v\in B} R(u, v)$. Let $d_R(u) = R(u, V)$ and $d_R(A) = R(A, V)$. A key tool in our proof is the random walk induced by $R$, which jumps from $u$ to $v$ with probability $M(u, v) = R(u, v) / d_R(u)$. The transition matrix $M$ has real eigenvalues $1 = \lam_1 \ge \lam_2 \ge\dots\ge \lam_n\ge -1$ (see e.g.~\cite{LevinPeres17}), and we let $\lam(R) = \max\{|\lam_2|, |\lam_n|\}$. Let $\s(u) = d_R(u) / d_R(V)$ be the stationary distribution of the random walk, and define $\|M\| = \max_{u, v} M(u, v)$.

\begin{definition}\label{def:degrees}
Let $\RM$ be the set of rate matrices $R$ with transition matrix $M$ such that there exist constants $\a \in [0, 1/2)$, $\g > 0$, $b > 0$ such that $\lam(R) = o(1)$ and
\begin{equation}\label{eq:lamcond}
\lam(R) \le (n\|M\|)^{-\a - \g},
\end{equation}
and $R$ satisfies the following:
\begin{enumerate}[(a)]
\item\label{item:scale} there exists a $d = d(n)$ such that $d_R(u) \ge d$ for all $u$, and $d_R(V) \le bdn$.
\item\label{item:powerlaw} for any $A\subseteq V$,
\begin{equation}\label{eq:powerlaw}
\s(A) = \frac{d_R(A)}{d_R(V)} \le b\bfrac{|A|}{n}^{1-2\a},
\end{equation}
\item $\|R\| \le dn^{-\g}$.
\end{enumerate}
\end{definition}

We state our main result.
\begin{theorem}\label{thm:1}
Let $k = O(1)$. If $R\in \RM$, then whp $G_{n, R}(t)$ satisfies
$$
\t_{k} = \t_{\AA_{k}}.
$$
\end{theorem}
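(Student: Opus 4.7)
Since $\AA_k$ requires minimum degree at least $k$, we trivially have $\t_{\AA_k} \ge \t_k$; it therefore suffices to show $G := G_{n,R}(\t_k) \in \AA_k$ whp. My plan is to follow the hitting-time framework of \cite{BollobasFrieze}, lifted to the weighted/spectral setting along the lines of \cite{FriezeKrivelevich02}. I would choose a time $t^* < \t_k$ with $\t_k - t^*$ a carefully chosen small fraction of $\t_k$, so that $G_0 := G_{n,R}(t^*)$ already carries most of the edges of $G$, yet the ``sprinkling layer'' $G_1 := G \setminus G_0$ is likely to intersect any suitably large collection of $R$-positive vertex pairs.

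The technical core is establishing the following pseudorandom structure on $G_0$ whp:
\begin{enumerate}[(i)]
\item the set $\SML$ of vertices with $G_0$-degree at most $(\log n)^{1/2}$ satisfies $|\SML| \le n^{1 - \OM(1)}$, and any two elements of $\SML$ lie at $G_0$-distance at least $3$;
\item every $S \subseteq V$ with $|S| \le n/\log n$ and $|S \cap \SML| \le 1$ has external neighborhood $|N_{G_0}(S) \setminus S| \ge 3k\,|S|$;
\item every vertex outside $\SML$ has $G_0$-degree of order $d\,t^*$.
\end{enumerate}
Property (iii) is a routine Bernstein/Chernoff calculation using the uniform entry bound (c). For (ii), I would apply a weighted expander mixing lemma of the form
\al{
\left| R(A,B) - d_R(V)\,\s(A)\s(B) \right| \le \lam(R)\sqrt{d_R(A)\,d_R(B)},
}
valid for disjoint $A,B$. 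Combined with (a), (b) and $\lam(R) \le (n\|M\|)^{-\a-\g}$, this forces $R(S, V \setminus (S \cup T))$ to be of the correct order $\OM(d|S|)$ whenever $|T| < 3k|S|$, and Bernstein's inequality (once more through (c)) transfers this to edge counts in $G_0$. Property (i) comes from a union bound: conditions (a) and (c) make the low-degree event rare for any single vertex, and (b) prevents clustering of such events.

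Given (i)--(iii), I would apply P\'osa rotation-extension to $G_0$ to produce, from any longest path in a non-Hamiltonian residual graph, a set of $\OM(n^2)$ \emph{boosters}---non-edges whose addition strictly lengthens the longest path or closes it into a Hamilton cycle. Because (ii) delivers expansion through $R$-positive edges, the boosters can be arranged among pairs $(u,v)$ of nonnegligible rate, so $G_1$ meets a booster whp. Iterating $\fl{k/2}$ times (properties (i)--(iii) persist under the removal of $O(n)$ edges per round, as this negligibly perturbs $R$ and its spectrum) yields the desired edge-disjoint Hamilton cycles; for odd $k$ an analogous rotation/sprinkling argument on the residual graph, which retains minimum degree $1$ precisely on $\SML$, produces the required matching of size $\fl{n/2}$. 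The main obstacle is property~(ii) in the presence of $\SML$: expansion must fail on sets containing two $\SML$-vertices, so it is the scattering in (i) that permits P\'osa's rotations to avoid multi-$\SML$ subsets; (i) itself relies delicately on (b), and quantifying this interplay between the spectral and stationary bounds is where the most careful technical work will lie.
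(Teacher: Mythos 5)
There is a genuine gap at the heart of your plan, namely the step ``the boosters can be arranged among pairs $(u,v)$ of nonnegligible rate.'' P\'osa rotation--extension does produce $\OM(n^2)$ booster \emph{vertex pairs} (the set $\EP_2(U)$ of endpoint pairs of longest paths), but under the hypotheses of the theorem this set can carry negligible --- even zero --- total $R$-weight. The expander mixing lemma only controls $R(A,B)$ for \emph{product} sets $A\times B$ of linear size, whereas $\EP_2(U)$ is a union $\bigcup_x \{x\}\times Y_x$ with the linear-sized sets $Y_x$ depending on $x$ and determined combinatorially by $G_0$, not by $R$. In the motivating case where $R$ is the adjacency matrix of an $(n,d,\m)$-graph with $d=n^{\OM(1)}$, $d \ll n$, all $\OM(n^2)$ P\'osa pairs may simply be non-edges of the host graph, so no amount of sprinkling hits a booster. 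This is precisely the obstacle that confined \cite{FriezeKrivelevich02} and \cite{AlonKrivelevich20} to dense regimes. The paper's central new idea, which your proposal is missing, is to replace single booster edges by booster \emph{sets} of constant size $r\le 2\mix(R)+1$: one runs $(R,P)$-alternating random walks of length $2\mix(R)$ from each of the two endpoints, shows (Lemma~\ref{lem:weightmix}) that after $\mix(R)$ alternations the walk endpoints spread near-uniformly over linear-sized sets $U_x, U_y$, and only \emph{then} applies the mixing lemma to $R(U_x,U_y)$ to glue the two walks. The resulting family $\TT_r^\th$ of $r$-edge boosters has $R_G[\TT_r^\th]=\OM(nd^r)$ (Propositions~\ref{prop:augmentingwalks} and~\ref{prop:boosterwalk}), and a dedicated sprinkling lemma for $r$-paths (Lemma~\ref{lem:sprinkling}) finishes the job. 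Making the rotation endpoints compatible with the walk structure (the permutation bookkeeping of Section~\ref{sec:posa}) is a substantial piece of work that has no analogue in the single-edge approach.

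Two further points. First, your thin sprinkling window $[t^*,\t_k]$ compounds the problem: even if a booster family of adequate total rate existed, a window of length $o(1)$ multiplied across $\fl{k/2}$ rounds and $O(n)$ extension steps leaves too little rate per round; the paper instead sprinkles a \emph{constant} fraction of the process ($t_1=1/2$ to $t_2=3/4$) and pays for the resulting lack of conditioning with the Bayes inversion \eqref{eq:bayes}, which costs a factor $2^{kn/2}$ and is only affordable because the failure probability in Lemma~\ref{lem:thebiglemma} is $e^{-\OM(n\sqrt{\ln n})}$. Second, your claim that properties (i)--(iii) ``persist under the removal of $O(n)$ edges per round, as this negligibly perturbs $R$ and its spectrum'' conflates two different objects: the spectrum of $R$ is never touched, and removing a Hamilton cycle (two edges per vertex) from the \emph{revealed graph} can destroy expansion outright unless the expansion factor was built in from the start; the paper handles this by working with a bounded-degree subgraph $H\in\SE_k$ expanding by a factor $k$, so that deleting a subgraph of maximum degree $\ell<k$ leaves $\SE_{k-\ell}$. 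Your factor-$3k$ expansion in (ii) is in the right spirit, but the bounded-degree construction of $H$ is also needed for the alternating-walk mixing estimates, so it cannot be dispensed with.
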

For any symmetric non-negative matrix $R$ on $V$, let
$$
\g_k(R) = \sum_{u\in V} d_R(u)^{k-1} e^{-d_R(u)}.
$$
Let $\RM(1)$ be the set of $R\in \RM$ with $\g_1(R) = 1$. Note that for any $x > 0$, the graphs $G_{n, R}(\t_k)$ and $G_{n, xR}(\t_k)$ are equal in distribution, so it is enough to prove Theorem~\ref{thm:1} for $R\in \RM(1)$.
\begin{theorem}\label{thm:pi}
Let $k = O(1)$. Suppose $P\in \RM$ has $\g_k(P) \to \g_k \in [0,\infty]$. Then
$$
\lim_{n\to\infty} \Prob{G_{n, P}\in \AA_{k}} = e^{-\g_k}.
$$
\end{theorem}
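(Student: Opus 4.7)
The plan is to deduce Theorem~\ref{thm:pi} from Theorem~\ref{thm:1} by coupling $G_{n, P}$ to the process $G_{n, R}(t)$ at time $t = 1$, reducing the problem to estimating $\Prob{\d(G_{n, P}) \ge k}$, and then completing this via Poisson approximation on the number of low-degree vertices.

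\emph{Reduction to minimum degree.} Given $P \in \RM$, define a rate matrix $R$ by $R(u, v) = -\ln(1 - P(u, v))$, so that $G_{n, R}(1)$ has the same distribution as $G_{n, P}$. Since $\|R\| \le d n^{-\g} = o(1)$, we have $R(u, v) = P(u, v)(1 + o(1))$ entrywise, and one checks that $R \in \RM$ with the same parameters. Theorem~\ref{thm:1} applied to $R$ yields $\t_k = \t_{\AA_k}$ whp in the process. Since $\t_k \le \t_{\AA_k}$ always, and $\{G_{n, P} \in \AA_k\} = \{\t_{\AA_k} \le 1\}$ while $\{\d(G_{n, P}) \ge k\} = \{\t_k \le 1\}$, these events coincide up to probability $\Prob{\t_k \ne \t_{\AA_k}} = o(1)$, giving
\[
\Prob{G_{n, P} \in \AA_k} = \Prob{\d(G_{n, P}) \ge k} + o(1).
\]

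\emph{Poisson approximation.} Let $X = |\{u \in V : d(u) \le k - 1\}|$, where $d(u)$ is the degree of $u$ in $G_{n, P}$. The goal is $\Prob{X = 0} \to e^{-\g_k}$, which I approach via the method of factorial moments. Each $d(u)$ is a sum of independent Bernoulli variables with means $P(u, v)$ totalling $d_R(u)(1 + o(1))$; Le Cam's inequality gives total-variation distance at most $\sum_v P(u, v)^2 \le \|R\| \cdot d_R(u)$ between $d(u)$ and $\Po(d_R(u))$, so $\Prob{d(u) = j} = \frac{d_R(u)^j}{j!} e^{-d_R(u)} + o(e^{-d_R(u)})$ uniformly in $u$. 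Summing, and using that $\g_j(P) = o(1)$ for $j < k$ (which follows from the degree bounds in Definition~\ref{def:degrees} combined with $\g_k(P) \to \g_k < \infty$), we obtain $\E{X} \to \g_k$. For $r \ge 2$,
\[
\E{X(X-1)\cdots(X-r+1)} = \sum_{u_1, \ldots, u_r \text{ distinct}} \Prob{\bigcap_{i=1}^r \{d(u_i) \le k - 1\}}.
\]
After excising the at most $\binom{r}{2}$ edges among $u_1, \ldots, u_r$, whose contribution to each $d(u_i)$ is $O(1)$ and whose collective probability is $o(1)$, the degrees $d(u_i)$ become independent, and the $r$-th factorial moment tends to $\g_k^r$. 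Standard Poisson approximation then gives $\Prob{X = 0} \to e^{-\g_k}$.

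\emph{Main obstacle.} The principal difficulty is controlling the Poisson approximation uniformly across vertices, since $d_R(u)$ may vary substantially under Definition~\ref{def:degrees}. Vertices with $d_R(u) = o(\ln n)$ could in principle contribute heavily to $\Prob{d(u) \le k-1}$ -- a single such vertex already forces $X \ge 1$ with nontrivial probability -- but the power-law condition \eqref{eq:powerlaw} bounds their number and the hypothesis $\g_k(P) \to \g_k$ forces their total contribution to $\E{X}$ to remain bounded. Verifying that the factorial moments converge to exactly $\g_k^r$ (rather than something strictly larger due to correlations among atypical vertices) will require careful use of the eigenvalue bound \eqref{eq:lamcond}, plausibly via a concentration estimate tying the distribution of $d(u)$ to its Poisson approximation even in the low-$d_R(u)$ regime.
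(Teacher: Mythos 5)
Your proposal follows essentially the same route as the paper: reduce $\AA_k$ to the minimum-degree event by coupling $G_{n,P}\stackrel{d}{=}G_{n,R}(1)$ and invoking Theorem~\ref{thm:1}, then apply the method of factorial moments to the count of vertices of degree below $k$, which is precisely the content and proof of the paper's Lemma~\ref{lem:threshold}. The one point to tighten is that Le Cam's additive total-variation bound does not yield the relative error $o(e^{-d_R(u)})$ you assert; one needs a multiplicative lower-tail Poisson estimate as in \eqref{eq:Pxl}, which is standard.
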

Note that if $P$ has constant row sums $d_P(u) = d = \ln n + (k-1)\ln\ln n + c_{n, k}$, then $\g_k = \exp\{-\lim_n c_{n, k}\}$.

\section{Proof outline}\label{sec:Hdef}

Let us first discuss the overarching proof idea. Traditionally, many proofs of Hamiltonicity in random graphs rely on finding so-called {\em booster edges} for a graph $G$, which are edge $e\notin G$ such that $G\cup \{e\}$ is closer to being Hamiltonian, typically meaning it contains a longer path than $G$ does. If random edges are added to $G$, one argues that some booster is likely to be added.

Montgomery~\cite{Montgomery19} more generally defined boosters as sets $T$ of edges whose addition gets $G$ closer to Hamiltonicity, and used sets $|T| \le 2$. A similar idea was used earlier in~\cite{FriezeJohansson17}. Booster pairs were also used by Alon and Krivelevich in their recent paper~\cite{AlonKrivelevich20}. In this paper we move to general boosters, i.e. edge sets $T$ of any (constant) size whose addition improves $G$. These are found using random alternating walks.

\subsection{Random subgraphs}\label{sec:Hintro}

For proving Hamiltonicity, the most important property of $G_{n, R}(\t_k)$ is expansion (see Lemma~\ref{lem:H} below for the definition). A major drawback of $G_{n, R}(\t_k)$ for our purposes is its high average degree, with most vertices having degree $\OM(\ln n)$. We therefore define a random subgraph $H\subseteq G_{n, R}(\t_k)$, which retains expansion and connectivity with high probability while containing only $O(n)$ edges. We do not show that $H$ itself is Hamiltonian, but the graph will be important to our proof.

Recall the construction of $G_{n, R}(t)$, in which each edge $\{u, v\}$ is included at a random time $E(u, v)$. Let $D\ge k$ be a constant integer and let
$$
T_D(u) = \inf\{t > 0 : d_t(u) \ge D\}
$$
be the random time at which $u$ attains degree $D$. Define $H(t)\subseteq G_{n, R}(t)$ by including any edge $\{u, v\}$ with $E(u, v) \le t$ and $E(u, v) \le \max\{T_D(u), T_D(v)\}$. Let $H = H(\t_k)$. In other words, an edge is included in $H$ if it is among the first $D$ edges attached to one of its endpoints in $G_{n, R}(\t_k)$.

For a graph $G = (V, E)$ and $A\subseteq V$ we let $N(A) = \{u\notin A : \{u, v\}\in E, \text{ some $v$}\}$ and $\wh N(A) = A\cup N(A)$. The following lemma is proved in Section~\ref{sec:Hprops}.
\begin{lemma}\label{lem:H}
Suppose $R\in \RM$, and let $k = O(1)$. For a large enough constant $D$, the following holds.
\begin{enumerate}[(i)]
\item (Light tail) Let $\th$ tend to infinity with $n$ and let $S_{\th}$ be the set of $u$ with $d_H(u) \ge \th$ or $d_R(u) \ge \th d$. Then with high probability, $|\wh N_H(S_{\th})| = o(n)$.
\item (Expansion) There exists a constant $\b = \b(R, k) > 0$ such that with high probability, every vertex set $A$ with $|A| < \b n$ satisfies $|N_{H}(A)| \ge k|A|$.
\end{enumerate}
\end{lemma}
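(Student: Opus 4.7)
The two parts are proved separately: the light tail via a first-moment argument that invokes the spread hypothesis~\eqref{eq:powerlaw}, and the expansion via the standard union bound over pairs $(A, B)$ combined with an estimate that treats the first $D$ edges at each vertex as approximate i.i.d.\ draws from $M(u,\cdot)$.

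For part~(i), decompose $S_\th = S_\th^R \cup S_\th^H$ where $S_\th^R = \{u : d_R(u) \ge \th d\}$ and $S_\th^H = \{u : d_H(u) \ge \th\}$. Markov applied to condition~(a) gives $|S_\th^R| \le bn/\th$. For the neighborhood, I bound $\E{\sum_{u \in S_\th^R} d_H(u)}$: an edge $uv$ lies in $H$ either as one of $u$'s first $D$ arrivals (contributing at most $D$ per $u$) or via $v$, with the latter probability $O(DR(u,v)/d_R(v)) = O(DR(u,v)/d)$ since $d_R(v) \ge d$. Summing, $\E{d_H(u)} = O(Dd_R(u)/d)$, and then summing over $u \in S_\th^R$ and using condition~(b),
$$
\E{\sum_{u \in S_\th^R} d_H(u)} = O\!\left(\frac{D}{d}\,d_R(S_\th^R)\right) = O(Dn)\,\s(S_\th^R) \le O(Dn)\bigl(|S_\th^R|/n\bigr)^{1-2\a}.
$$
Since $|S_\th^R|/n \le b/\th$ and $\a < 1/2$, this is $o(n)$ as $\th \to \infty$, so Markov yields $|\wh N_H(S_\th^R)| = o(n)$ whp. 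For $S_\th^H$, bound $d_H(u) \le D + \sum_v \mathbf{1}[E(uv) \le T_D(v) \wedge \t_k]$, an independent Bernoulli sum of mean $O(Dd_R(u)/d) = O(D)$ for $u \notin S_\th^R$; a Chernoff bound gives $\Prob{d_H(u) \ge \th} \le e^{-\OM(\th)}$, so both $|S_\th^H \setminus S_\th^R|$ and $\sum_{u \in S_\th^H \setminus S_\th^R} d_H(u)$ have expectation $o(n)$.

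For part~(ii), take a union bound: the failure is contained in the union over $A \subseteq B \subseteq V$ with $|A| = a \le \b n$ and $|B| = (k+1)a$ of the event $\EE_{A, B}$ that every $H$-edge at $A$ has its other endpoint in $B$. Each $u \in A$ places $\min(D, d_{\t_k}(u)) \ge k$ edges in $H$ from its own side, so $\EE_{A, B}$ forces these $D$ (or all) arrivals at $u$ to lie in $B$. Arrivals at distinct vertices become conditionally independent after conditioning on edges within $A$, and sampling without replacement differs from i.i.d.\ samples of $M(u, \cdot)$ by a factor $1+o(1)$ by condition~(c), giving
$$
\Prob{\EE_{A, B}} \le (1+o(1))\prod_{u \in A} M(u, B)^D.
$$
The expander mixing lemma for the reversible chain $M$, combined with~\eqref{eq:lamcond} and~(b), yields $|A|^{-1}\sum_{u \in A} M(u, B) = O(\s(B) + \lam(R)) = O((|B|/n)^{1-2\a})$. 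AM--GM then gives $\prod_{u \in A} M(u, B)^D \le O((|B|/n)^{1-2\a})^{D|A|}$, and combining with $\binom{n}{a}\binom{n}{ka} \le (en/a)^{(k+1)a}$ and summing over $a$ is $o(1)$ provided $D(1-2\a) > k+1$ and $\b$ is sufficiently small. Very small $a$ (say $a \le n^{1-\e}$) is handled separately by the crude bound $M(u, B) \le |B|\|M\| \le (k+1)a \cdot n^{-\g}$ from~(c).

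The main obstacles are: in~(i), avoiding the naive $O(n)$ bound on $\sum_{u \in S_\th^R} d_H(u)$ by invoking~\eqref{eq:powerlaw}, which is essential to push the total to $o(n)$; in~(ii), rigorously justifying the near-factorization $\Prob{\EE_{A, B}} \le \prod_{u} M(u, B)^D$ in the presence of shared edges within $A$, and extracting a quantitatively useful expander-mixing bound on $\sum_{u \in A} M(u, B)$ with the correct dependence on $\lam(R)$ and $\s(B)$.
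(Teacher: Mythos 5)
Your part~(i) follows the same decomposition as the paper ($S_\th$ split into the $d_R$-heavy set and the $d_H$-heavy set, first moment plus the power-law condition~\eqref{eq:powerlaw}), but it contains an error at the Chernoff step: for $u\notin S_\th^R$ you only know $d_R(u) < \th d$, so the mean of $\sum_v \mathbf{1}[E(uv)\le T_D(v)\wedge \t_k]$ is $O(Dd_R(u)/d) = O(D\th)$, not $O(D)$. Hence $\Prob{d_H(u)\ge\th}\le e^{-\OM(\th)}$ fails for vertices with $d_R(u)$ between, say, $\th d/(20D)$ and $\th d$, and the conclusion $\E{|S_\th^H\setminus S_\th^R|}=o(n)$ does not follow as written. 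The paper repairs exactly this by proving the tail bound $\Prob{d_H(u)\ge\ell}\le 2(1/5)^{\ell/2}$ only for $u\notin B_{c\ell}$ with $c=1/(20D)$, and then absorbing the intermediate sets $B_{c\ell}$ via the power law, summing over $\ell\ge\th$. Your argument can be patched the same way (apply your first-moment bound to $\{u: d_R(u)\ge Cd\}$ for some $C=C(\th)\to\infty$ slowly, and Chernoff only below that), but as stated the step is wrong.

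Part~(ii) has a more serious gap. Your union bound over $(A,B)$ rests on $\Prob{\EE_{A,B}}\le(1+o(1))\prod_{u\in A}M(u,B)^D$, i.e.\ on each $u\in A$ contributing exponent $D$. But a vertex $u$ that never reaches degree $D$ by time $\t_k$ (the set $\SML$) places only $d_{\t_k}(u)\ge k$ edges from its own side, so the honest exponent for such $u$ is $k$, and then the per-vertex saving $M(u,B)^k = O(((k+1)a/n)^{(1-2\a)k})$ cannot beat the $(en/a)^{(k+1)}$ entropy cost of choosing $B$ for any $D$ — the union bound is structurally unable to handle low-degree vertices. This is precisely why the paper splits $A$ into $A_1=A\cap\SML$ and $A_2=A\setminus\SML$ and handles $A_1$ by a separate structural argument (whp $\SML$ spans no edge and no vertex has two neighbours in $\SML$, so minimum degree $k$ alone gives $|N_H(A_1)|\ge k|A_1|$), together with a bound on $e_H(A_2,\wh N(\SML))$; none of this appears in your proposal. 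Two further points need more care even for the full-degree vertices: the expander mixing lemma gives $M(A,B)/|A|=\s(B)+O(\lam\sqrt{n\s(B)/|A|})$, and the error term is not $O(\lam)$ when $|A|$ is small relative to $n\s(B)$ (this is the content of the paper's Lemma~\ref{lem:EML}~(\ref{item:kappa}), which requires a genuine argument); and your crude fallback $M(u,B)\le|B|\,\|M\|\le (k+1)a\,n^{-\g}$ exceeds $1$ once $a\gg n^{\g}$, so it does not cover the range up to $n^{1-\e}$ as claimed. For the expansion of the full-degree part, the paper avoids the $(A,B)$ union bound altogether and instead proves expansion of the $D$-out graph by a sequential vertex-revealing procedure using Lemma~\ref{lem:EML}~(\ref{item:kappa}); your route could in principle be made to work there, but the $\SML$ issue must be resolved by a separate argument regardless.
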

Let $\SE_k(R)$ (``sparse expanders'') be the set of graphs satisfying (i) -- (ii). Note that if $G\in \SE_k$ and $\D(F) \le \ell < k$ then $G\setminus F \in \SE_{k-\ell}$ and $G\cup F\in \SE_k$. Note also that $\SE_k \subseteq \SE_\ell$.

For $t > 0$ let
\begin{equation}\label{eq:SMLdef}
\SML(t) = \{u : T_D(u) > t\} = \{u : d_t(u) < D\},
\end{equation}
and $\LRG(t) = V\setminus \SML(t)$. For $0\le t_0 \le t_1$ define a random graph $G_{n, R}^*(t_0, t_1)$ by including any edge $\{u, v\}$ with $E(u, v)\le t_1$, and any $\{u, v\}$ with one endpoint in $\SML(t_0)$ and $E(u, v) \le \t_k$. Then $G_{n, R}(t_1)\subseteq G_{n, R}^*(t_0, t_1) \subseteq G_{n, R}(\t_k)$ whenever $t_1 \le \t_k$, and $H\subseteq G_{n, R}^*(t_0, t_1)$ for any $0 < t_0\le t_1$. Suppose $t_0 < t_1 < t_2$ and define
$$
G_0 = G_{n, R}^*\left(t_0, t_0\right), \quad G_1 = G_{n, R}^*\left(t_0, t_1\right),\quad G_2 = G_{n, R}^*\left(t_0, t_2\right).
$$
For $i=0,1,2$ let $\FF_i$ denote the $\s$-algebra generated by $G_{n, R}^*(t_0, t_0)$ (including $E(u, v)$ for all edges included) and $G_{n, R}^*(t_0, t_i)$ (excluding $E(u, v)$). Then $H$ is $\FF_i$-measurable for all $i$, and the following lemma lets us jump between $G_1$ and $G_2$.

\begin{lemma}\label{lem:Gstar}
Suppose $R\in \RM(1)$. Let $0 < t_0 < t_1 < t_2$ and $G_i = G_{n, R}^*(t_0, t_i)$ for $i=0,1,2$. Let $\LL_2\in \FF_2$. Then for any set $F$ of edges
\al{
  \Prob{F\subseteq G_2 \mid \FF_1} = \prod_{\substack{\{u, v\}\in F\setminus G_1 \\ u,v\in \LRG(t_0)}} \left(1 - e^{-R(u, v)(t_2-t_1)}\right), \label{eq:add} \\
\Prob{F\subseteq G_1 \mid \{F\subseteq G_2\}\cap \LL_2} \ge \left(\frac{t_1-t_0}{t_2-t_0}\right)^{|F|}. \label{eq:remove}
}
\end{lemma}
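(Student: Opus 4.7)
The plan is to exploit the product structure of the exponential edge times $\{E(u,v)\}$: each is an independent exponential, and once I condition on an appropriate $\sigma$-algebra memorylessness handles \eqref{eq:add} while a truncated-exponential lower bound handles \eqref{eq:remove}, edge-by-edge.

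For \eqref{eq:add}, I would first observe that edges in $F \cap G_1$ lie in $G_2$ automatically since $G_1 \subseteq G_2$. For each $\{u,v\} \in F \setminus G_1$ with both endpoints in $\LRG(t_0)$, the $\SML$-clause in the definition of $G_i$ is inoperative, so $\{\{u,v\} \in G_1\} = \{E(u,v) \le t_1\}$ and $\{\{u,v\} \in G_2\} = \{E(u,v) \le t_2\}$. The only information $\FF_1$ reveals about $E(u,v)$ is $E(u,v) > t_1$; the remainder of $\FF_1$ concerns other edges and is independent of $E(u,v)$. Memorylessness gives $\Prob{E(u,v) \le t_2 \mid E(u,v) > t_1} = 1 - e^{-R(u,v)(t_2-t_1)}$, and mutual independence of the edge times multiplies these factors across the relevant edges to yield \eqref{eq:add}.

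For \eqref{eq:remove}, I would condition on $\FF_2$ and argue pointwise on $\{F \subseteq G_2\}$. For each $\LRG$-$\LRG$ edge $e \in F \setminus G_0$, the conditional distribution of $E(e)$ given $\FF_2$ is a truncated exponential on $(t_0, t_2]$: the event $e \notin G_0$ forces $E(e) > t_0$, the event $e \in G_2$ forces $E(e) \le t_2$, and no other piece of $\FF_2$ involves $E(e)$. These random variables are conditionally independent across such $e$, so
\[
\Prob{F \subseteq G_1 \mid \FF_2} \;\ge\; \prod_{e \in F \setminus G_0} \frac{1-e^{-R(e)(t_1-t_0)}}{1-e^{-R(e)(t_2-t_0)}}.
\]
I would establish $(1-e^{-r(t_1-t_0)})/(1-e^{-r(t_2-t_0)}) \ge (t_1-t_0)/(t_2-t_0)$ by rewriting it as $f(t_1-t_0) \ge f(t_2-t_0)$ with $f(t) = (1-e^{-rt})/t$, a function that is decreasing on $(0,\infty)$ (one computes $t^2 f'(t) = e^{-rt}(rt+1)-1$, whose derivative $-r^2 t e^{-rt}$ is negative on $(0,\infty)$, so the numerator of $f'$ is negative). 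Since each per-edge factor is at most $1$ and $|F \setminus G_0| \le |F|$, the product is at least $\left((t_1-t_0)/(t_2-t_0)\right)^{|F|}$ pointwise, and averaging against $\LL_2 \cap \{F \subseteq G_2\} \in \FF_2$ transfers this to the desired conditional probability.

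The main obstacle is treating edges in $F$ with an endpoint in $\SML(t_0)$, because the clause $\{E(u,v) \le \tau_k\}$ ties their membership in $G_i$ to the global random time $\tau_k$, which depends on many other edge times and breaks the pointwise independence used above. I would handle this by enlarging the conditioning to include all edge times of edges incident to $\SML(t_0)$: this renders $\tau_k$ measurable, leaves the $\LRG$-$\LRG$ edge times as conditionally independent truncated exponentials, and for each $\SML$-incident edge in $F$ reduces the event $\{e \in G_1\}$ to a truncated-exponential event with thresholds $\max(t_i, \tau_k)$ in place of $t_i$. A short case analysis on the position of $\tau_k$ relative to $t_1, t_2$ shows the per-edge lower bound $(t_1-t_0)/(t_2-t_0)$ still holds (becoming $1$ once $\tau_k \ge t_2$), so the multiplicative bound propagates across all edges of $F$.
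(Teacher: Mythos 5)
Your treatment of the $\LRG(t_0)$--$\LRG(t_0)$ edges is correct and is essentially the paper's argument: condition so that the $\SML$-incident data and $\t_k$ are fixed, use memorylessness of the exponential clocks for \eqref{eq:add}, and the truncated-exponential ratio for \eqref{eq:remove}. Your monotonicity proof that $(1-e^{-rs})/(1-e^{-rS})\ge s/S$ is in fact cleaner than the paper, which passes through this step as an approximate equality justified by $\|R\|=o(1)$; the exact inequality is the right way to do it, and your reduction from the pointwise bound given $\FF_2$ to the conditional probability given $\{F\subseteq G_2\}\cap\LL_2$ is also handled correctly.

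The gap is in your final paragraph. The claim that, after conditioning on the $\SML(t_0)$-incident edge times, ``a short case analysis on the position of $\t_k$'' yields the per-edge bound $(t_1-t_0)/(t_2-t_0)$ is false in the intermediate cases. If $t_1\le \t_k<t_2$, an edge $e$ incident to $\SML(t_0)$ with $e\notin G_0$ has $E(e)>\max(t_0,\t_k)=\t_k$, so $e\in G_1$ would require $E(e)\le\max(t_1,\t_k)=\t_k$, which is impossible, while $e\in G_2$ (i.e.\ $E(e)\le t_2$) can still occur: the per-edge conditional probability is $0$. Similarly, for $t_0<\t_k<t_1$ the ratio degrades to $(t_1-\t_k)/(t_2-\t_k)$, which is \emph{smaller} than $(t_1-t_0)/(t_2-t_0)$. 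The way out --- and the paper's entire treatment of these edges, in one sentence --- is structural rather than computational: by definition, $G^*(t_0,\cdot)$ front-loads every $\SML(t_0)$-incident edge with $E\le\t_k$ into $G_0$, so in the regime $\t_k\ge t_2$ (the only one in which the lemma is applied, and the problematic cases above show it can genuinely fail outside it) the $\SML$-incident edge sets of $G_0,G_1,G_2$ coincide and are $\FF_0$-measurable. Hence every $\SML$-incident edge of $F$ lying in $G_2$ already lies in $G_0\subseteq G_1$ and contributes a factor $1$ to both products; equivalently, $G_2\setminus G_1$ and $(F\setminus G_0)\cap G_2$ contain only $\LRG(t_0)$--$\LRG(t_0)$ edges, and no case analysis on $\t_k$ is needed.
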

\begin{proof}
Any edge in $G_2\setminus G_1$ is fully contained in $\LRG(t_0)$. Conditional on $G_0$ and $\SML(t_0)$, the edges $\{u, v\}\notin G_0$ with $u, v\in \LRG(t_0)$ are independent exponential random variables with individual rates $R(u, v)$, conditioned to be at least $t_0$. Then \eqref{eq:add} follows from the memoryless property of exponential random variables. Since $\|R\| = o(1)$, we have
\begin{multline}
\Prob{F\subseteq G_1 \mid \{F\subseteq G_2\}\cap \LL_2} = \prod_{\{u, v\}\in F\setminus G_0}\frac{e^{-R(u, v)t_0} - e^{-R(u, v)t_1}}{e^{-R(u, v)t_0} - e^{-R(u, v)t_2}}  \\
= \left(\frac{t_1-t_0}{t_2-t_0}\right)^{|F\setminus G_0|} \ge \left(\frac{t_1-t_0}{t_2-t_0}\right)^{|F|}.
\end{multline}
\end{proof}

\subsection{The high-level argument}\label{sec:highlevel}

Suppose $R\in \RM(1)$ and let $t_0 = 1/4, t_1 = 1/2, t_2 = 3/4$. Define $G_0, G_1, G_2$ as in the previous section. We will show that $G_2 \in \AA_k$ whp. In Section~\ref{sec:degrees} we show that $\t_k > 3/4$ whp, and since $G_2 \subseteq G_{n, R}(\t_k)$ when $\t_k > 3/4$, it follows that
\begin{equation}\label{eq:actualpunchline}
\Prob{G_{n, R}(\t_k) \in \AA_k} \ge \Prob{G_2 \in \AA_k\AND \t_k > 3/4} = 1-o(1).
\end{equation}

Say that $F$ is a $k$-graph if it can be written as the disjoint union of $F_1, \dots, F_{\fl{k/2}}$ where $F_i$ is a path or a cycle for all $i$, as well as a matching $F_0$ when $k$ is odd. Define
$$
s_k(G) = \max\{|F| : F\subseteq G \text{ a $k$-graph}\},
$$
so that $G\in \AA_k$ if and only if $s_k(G) = \fl{kn / 2}$.

For $i=1,2$ let $\MM_i^\ell$ be the event that $s_k(G_i) = \ell$. Recall the definition of $\SML(t)$ from \eqref{eq:SMLdef}, and let
$$
\LL = \{H\in \SE_k\} \cap \{|\SML(t_0)| = o(n)\},
$$
and note that $\LL$ is $\FF_0$-measurable. Then $\MM_i^\ell\cap \LL \in \FF_i$ for $i=1,2$, and Lemma~\ref{lem:Gstar} gives
$$
\Prob{\MM_1^\ell \mid \MM_2^\ell \cap \LL} \ge \min_{\substack{F \text{ $k$-graph}}} \Prob{F\subseteq G_1\mid \LL\cap \{F\subseteq G_2\}} \ge \bfrac12^{kn/2}.
$$
Then for any $\ell < \fl{kn / 2}$,
\al{
  \Prob{\MM_2^\ell \cap \LL} & = \frac{\Prob{\MM_1^\ell \cap \MM_2^\ell\cap \LL}}{\Prob{\MM_1^\ell \mid\MM_2^\ell\cap\LL}} \le \frac{\Prob{\MM_2^\ell \mid \MM_1^\ell\cap \LL}}{2^{-kn/2}}. \label{eq:bayes}
}
Suppose we are able to prove that for any $\ell < \fl{kn/2}$,
\begin{equation}\label{eq:numeratorbound}
\Prob{\MM_2^\ell \ \middle|\ \MM_1^\ell\cap \LL} \le e^{-\OM(n\sqrt{\ln n})}. 
\end{equation}
Then plugging \eqref{eq:numeratorbound} into \eqref{eq:bayes} gives
\al{
  \Prob{G_2 \notin \AA_k} & \le \Prob{\ol\LL} + \sum_{\ell < \fl{kn/2}} \Prob{\MM_2^\ell\cap\LL} \\
  & \le \Prob{\ol\LL} + kne^{-\OM(n\sqrt{\ln n}) + O(n)} = \Prob{\ol \LL} + o(1).
}
In Lemma~\ref{lem:degreebound} we will show that $|\SML(t_0)| = o(n)$ whp. Together with Lemma~\ref{lem:H}, this shows that $\LL$ occurs whp. As noted in \eqref{eq:actualpunchline}, this together with a proof that $\t_k > 3/4$ whp (Lemma~\ref{lem:threshold} below) shows that $G_{n, R}(\t_k) \in \AA_k$ whp.

It remains to prove \eqref{eq:numeratorbound}. The graph $G_1$ contains $H$ by construction. If $\MM_1^\ell\cap \LL$ holds then $G_1$ contains some $k$-graph $F$ of size $\ell$. If $k$ is even, suppose $F = F_1\cup \dots F_{k/2}$ for some disjoint paths and Hamilton cycles $F_i$. Suppose without loss of generality that $|F_1| < n$. Then $G_1$ contains the graph $G = H\cup F_1 \setminus (F_2\cup \dots \cup F_{k/2}) \in \SE_2$. Let $G^p$ be the graph obtained by independently adding any edge $\{u, v\}$ to $G$ with probability $p(u, v)$, where $p$ is some symmetric function. Letting$$
p(u, v) = \left\{\begin{array}{ll}
0, & u\in \SML(t_0)\OR v\in \SML(t_0), \\
0, & \{u, v\} \in F_2\cup\dots\cup F_{k/2}, \\
\frac13R(u, v), & \text{ otherwise},
\end{array}\right.
$$
we have $G^p\subseteq G_2$ by Lemma~\ref{lem:Gstar}. If $s_2(G^p) > s_2(G)$ then $s_k(G_2) \ge s_k(G_1\cup G^p)  > s_k(G_1)$, since $G^p$ is disjoint from $F_2\cup\dots\cup F_{k/2} \subseteq G_1$.

If $k$ is odd and $F = F_1\cup\dots\cup F_{(k+1)/2}$, assume without loss of generality that $\D(F_1) \le i$ and $|F_1| < in/2$ for some $i\in \{1, 2\}$. Then $G = H\cup F_1 \setminus (F_2\cup\dots\cup F_{(k+1)/2}) \in \SE_1$, and $s_i(G^p) > s_i(G)$ implies $s_k(G_2) > s_k(G_1)$.

So, \eqref{eq:numeratorbound} follows from the following lemma.
\begin{lemma}\label{lem:thebiglemma}
Let $i\in \{1, 2\}$. Suppose $R\in \RM(1)$ and $G\in \SE_i(R)$ with $s_i(G) < in/2$, and suppose $p(u, v) \ge R(u, v) / 3$ for all $\{u, v\}\notin E$ where $\sum_{\{u, v\}\in E} R(u, v) = o(n\ln n)$. Then
\al{
  \Prob{s_i(G^p) = s_i(G)} \le e^{-\sqrt{n\ln n}}.
}
\end{lemma}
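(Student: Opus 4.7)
The plan is to produce a large collection $\BB$ of \emph{boosters} --- sets $T$ of at most two edges such that $s_i(G\cup T) > s_i(G)$ --- and show that $G^p$ contains at least one $T\in \BB$ with probability at least $1 - e^{-\sqrt{n\ln n}}$. First I would extract from $G$ a witness to $s_i(G) = \ell < in/2$: for $i=2$, a longest path $P = v_0 v_1\dots v_\ell$ (obtained from the maximum $i$-graph after discarding at most one edge of a cycle); for $i = 1$, a maximum matching $\MM$ with at least two unmatched vertices. A booster in the $i=2$ case is an edge or edge-pair whose addition creates a strictly longer path or cycle in $G$, while in the $i=1$ case it is an edge forming a matching augmenting path (possibly together with one extension edge).

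Next I would generate many candidate endpoints using the alternating-random-walk technique hinted at in the introduction. For $i=2$, I would perform P\'osa-rotations on $P$ driven by the walk $M$: from the current endpoint $x$, sample a neighbor $v$ according to $M(x,\cdot)$ and, if $v = v_j$ is a path-interior vertex, rotate at $v$ to obtain a new endpoint. The $2$-expansion of $G\in \SE_2$ ensures that the P\'osa-rotation tree branches, producing after $O(\log n)$ rounds a set $\XX$ of $\OM(n)$ rotated endpoints, and the mixing condition $\lam(R)=o(1)$ ensures each such endpoint is distributed close to the stationary distribution $\s$. Applying the procedure to both ends of rotated paths yields a collection $\PP$ of $\OM(n^2)$ endpoint pairs $(x, y)$; for each such pair the edge $\{x, y\}$ either closes the path into a Hamilton cycle (when $\ell = n-1$) or creates a cycle of length $\ell + 1$ that the expansion of $G$ allows to extend via one further edge, giving a booster in $\BB$ of size at most two. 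The $i=1$ case is analogous, with alternating BFS from an unmatched vertex playing the role of P\'osa-rotation and $1$-expansion producing $\OM(n)$ alternating reachable vertices, each furnishing an augmenting booster.

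I would then estimate the total weight of boosters. Since the endpoints are approximately $\s$-distributed, and a $\s$-stationary pair $X, Y$ satisfies $\E{R(X,Y)} \approx d_R(V)/n^2 \sim d/n$ (with $d = \OM(\ln n)$ by the normalization for $R\in \RM(1)$), the sum $W := \sum_{T\in \BB}\prod_{e\in T} R(e)$ is of order $\OM(nd) = \OM(n\ln n)$. The excluded edge set $E$ contributes at most $\sum_{\{u,v\}\in E} R(u,v) = o(n\ln n)$ and may be absorbed. To convert this weight estimate into a probability bound one cannot simply multiply, since the boosters overlap; instead I would apply Janson's inequality to $Y := \sum_{T\in \BB}\ind\{T\subseteq G^p\}$, using $\E{Y} = \OM(W) = \OM(n\ln n)$ and a dependency sum $\Delta$ controlled by a careful count of overlapping pairs, which yields $\Prob{Y = 0} \le \exp(-(\E{Y})^2/(2\Delta)) \le e^{-\sqrt{n\ln n}}$ once $\Delta$ is bounded by observing that each potential edge lies in at most $O(n)$ boosters.

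The hardest step, I expect, is the coupling of P\'osa-rotation (or alternating BFS) with the walk $M$. A classical rotation freely chooses among all path-neighbors of the endpoint, whereas $M$-driven rotations are biased by the weights $R(\cdot,\cdot)$ and may concentrate on a small set. Ensuring that after $O(\log n/\log(1/\lam))$ rotation rounds the endpoint distribution is within $o(1)$ of $\s$ in a quantitative sense --- while simultaneously using the $i$-expansion of $G\in \SE_i$ to guarantee the rotation tree branches enough to reach $\OM(n)$ distinct endpoints --- requires a delicate inductive argument combining spectral mixing of $M$ with the combinatorial expansion of $G$, and is where most of the technical effort in the proof will lie.
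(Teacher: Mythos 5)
Your outline has the right ingredients (rotations or alternating structures, a weight count against $R$, a correlation-inequality finish), but it contains a genuine gap at its core: the insistence that boosters have size at most two. The two halves of your construction are incompatible. If the rotations are classical P\'osa rotations using edges of $G$, you do get $\OM(n^2)$ endpoint pairs, but that set is a union $\bigcup_y \{y\}\times X_y$ of linear-size slices rather than a product set, and for an inhomogeneous $R\in\RM$ nothing prevents the weight $R(\cdot,y)$ from being concentrated on $o(n)$ vertices (entries are bounded only by $dn^{-\g}$ while $d_R(y)$ may be as large as $dn^{2\a}$), which $X_y$ may entirely avoid; so the total weight of the single closing edges need not be $\OM(dn)$ and your estimate $W=\OM(n\ln n)$ is unproven --- indeed it is exactly the point that can fail. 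If instead, as you propose, the rotations are driven by $M(x,\cdot)$, then the sampled vertex $v$ is generically \emph{not} a $G$-neighbour of $x$ (recall $G\in\SE_i$ is a sparse expander with $O(n)$ edges), so each rotation round consumes a new edge that must itself be sprinkled into $G^p$; after the $\Theta(\mix(R))$ rounds you correctly identify as necessary for the endpoint distribution to approach $\s$, the booster is a strictly alternating walk all of whose $r$ odd edges must appear, i.e.\ an $r$-edge booster with $r\le 2\mix(R)+1$, which can exceed $2$. This is precisely the paper's route (Propositions~\ref{prop:augmentingwalks} and~\ref{prop:boosterwalk} give $R_G[\TT_r^\th]=\OM(nd^r)$ for such $r$), and it is why the paper explicitly abandons booster pairs for boosters of arbitrary constant size.

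Once the boosters are $r$-edge sets, the closing probabilistic step must also change. The paper uses a bespoke sprinkling lemma (Lemma~\ref{lem:sprinkling}) that exposes $G^p$ in $r$ independent rounds and tracks the surviving booster weight, yielding $\exp\{-\OM(nd^r/\D^{r-1})\}$ with $\D\le 2\th d$, hence $\exp\{-\OM(nd/\th^{r-1})\}$, which suffices once $\th^{r-1}=o(\sqrt d)$. A Janson-type argument is not obviously wrong for $r$-edge boosters, but your version is set up only for pairs and rests on the unestablished weight bound above; you would also need to discard boosters meeting the excluded set $E$ and the heavy-vertex set $S_\th$ (the paper does this via $R_G[\TT_r(E)]\le R(E)\D^{r-1}=o(nd^r)$). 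As written, the proposal does not close the hardest case, namely a near-spanning path or near-perfect matching where both endpoints must be re-randomized with new edges before the gluing edge is applied.
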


The remainder of the paper is mainly devoted to proving Lemma~\ref{lem:H} (Section~\ref{sec:Hprops}) and Lemma~\ref{lem:thebiglemma} (Sections~\ref{sec:alternatingwalks} through~\ref{sec:alternatingproofs}).

\section{Preliminaries}\label{sec:prelims}

We state some preliminary, for the most part standard, results, and leave the proofs for Section~\ref{sec:prelimproofs}.

\subsection{Degrees}\label{sec:degrees}

Recall that $d_R(u) = \sum_v R(u,v)$. We will often assume that $\g_1(R) = 1$, and note that this implies that $d = \Theta(\ln n)$ where $d = \min d_R(u)$.

\begin{lemma}\label{lem:degreebound}
Let $R\in \RM(1)$.
\begin{enumerate}[(i)]
\item For any integer $D \ge 1$, $u\in V$ and $S\subseteq V$ with $|V\setminus S| = O(1)$, and $t = \OM(1)$,
$$
\Prob{e_t(u, S) < D} = \exp\{-td_R(u) + O(\ln d_R(u))\}.
$$
\item Let $\SML(t)$ denote the set of vertices in $G_{n, R}(t)$ with degree less than $D$. If $t = \OM(1)$ then $|\SML(t)| = o(n)$ whp.
\end{enumerate}
\end{lemma}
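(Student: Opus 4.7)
The plan is to prove (i) by a direct computation on the Bernoulli sum defining $e_t(u, S)$, and then to deduce (ii) from (i) via Markov's inequality.

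For (i), I will use that $e_t(u, S) = \sum_{v\in S}\ind[E(u,v)\le t]$ is a sum of independent Bernoullis with parameters $p_v = 1-e^{-tR(u,v)}$. A preliminary observation: since $|V\setminus S| = O(1)$ and $\|R\| \le dn^{-\g}$ with $d \le \log n$ (from $\g_1(R) = 1$ and $\sum_u e^{-d_R(u)} \ge n e^{-d}$), we have $R(u, S) = d_R(u) - O(\|R\|) = d_R(u) - o(1)$. Expanding the probability of exactly $j$ successes yields
\al{
  \Prob{e_t(u, S) = j} = e^{-tR(u, S)}\sum_{T\subseteq S,\, |T|=j}\prod_{v\in T}(e^{tR(u,v)}-1).
}
Since $tR(u, v)\le t\|R\| = o(1)$, we have $e^{tR(u,v)} - 1 \le 2tR(u,v)$; bounding the elementary symmetric polynomial by Maclaurin's inequality gives $\Prob{e_t(u, S) = j} \le e^{-tR(u, S)}(2tR(u, S))^j/j!$. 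Summing over $j < D$ and taking logarithms yields the claimed upper bound $-td_R(u) + O(\log d_R(u))$, where the constant and $\log t$ factors are absorbed since $\log d_R(u)\to\infty$. The matching lower bound is immediate from $\Prob{e_t(u, S) < D}\ge \Prob{e_t(u, S) = 0} = e^{-tR(u, S)} = e^{-td_R(u) + o(1)}$.

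For (ii), applying (i) with $S = V\setminus\{u\}$ gives $\Prob{d_t(u) < D}\le d_R(u)^C e^{-td_R(u)}$ for some constant $C = C(D)$. The map $x\mapsto x^C e^{-tx}$ is decreasing on $[C/t, \infty)$, so if I can show $d \ge C/t$ then each summand is at most $d^C e^{-td}$. For this I need $d = \OM(\log n)$, which follows by Jensen's inequality applied to the convex function $e^{-x}$: since $\g_1(R) = 1$,
\al{
  1 = \sum_u e^{-d_R(u)} \ge n\, e^{-d_R(V)/n} \ge n\, e^{-bd},
}
so $bd \ge \log n$, using $d_R(V) \le bdn$ from Definition~\ref{def:degrees}\eqref{item:scale}. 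Thus for $t\ge t_0 = \OM(1)$, $e^{-td}\le n^{-t_0/b}$, giving
\al{
  \E{|\SML(t)|} \le n\, d^C e^{-td} = n^{1-\OM(1)}(\log n)^{O(1)} = o(n).
}
A final application of Markov's inequality then yields $|\SML(t)| = o(n)$ whp.

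The main technical obstacle is the error bookkeeping in (i): three small error sources (the $|V\setminus S|\,\|R\|$ correction to $R(u,S)$, the approximation $e^{tR(u,v)}-1 \approx tR(u,v)$, and the logarithmic factor from the polynomial in $tR(u, S)$) must all fit inside the $O(\log d_R(u))$ slack of the statement, which works only because $d = \OM(\log n)$ so that $\log d_R(u) = \OM(\log\log n)$ diverges. Once (i) is in hand, (ii) is a routine Markov-plus-Jensen calculation with no further surprises.
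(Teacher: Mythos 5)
Your proof is correct and follows essentially the same route as the paper: part (i) is the same Poisson-type approximation for a sum of independent Bernoulli variables (the paper simply cites the estimate $\Prob{X \le \ell} = (1+o(1))e^{-\mu}\mu^\ell/\ell!$ as standard, where you derive it explicitly via the symmetric-function expansion, and the paper leaves the routine first-moment/Markov argument for (ii) implicit, which you spell out). One small slip worth fixing: to conclude $d \le \ln n$ from $\g_1(R)=1$ the inequality runs the other way, namely $1 = \sum_u e^{-d_R(u)} \le n e^{-d}$ since $d_R(u)\ge d$ for all $u$ --- the conclusion you draw from it is nevertheless the right one, and the rest of the argument (including the Jensen bound $d = \OM(\ln n)$ needed for (ii)) is sound.
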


Recall the definition $\g_k(P) = \sum_u e^{-d_P(u)} d_P(u)^{k-1}$.
\begin{lemma}\label{lem:threshold}
Let $k\ge 1$. Suppose $P\in \RM$ has $\g_k(P) \to \g_k \in [0,\infty]$. Then
$$
\lim_{n\to \infty} \Prob{\d(G_{n, P}) \ge k} = e^{-\g_k}.
$$
If $R\in \RM(1)$ and $\e \gg \frac{\ln \ln n}{\ln n}$, then $G_{n, R}(t)$ has $1-\e < \t_k < 1 + \e$ whp.
\end{lemma}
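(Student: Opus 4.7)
The plan is to handle part (i) by the method of moments and part (ii) by a direct first- and second-moment computation built on Lemma~\ref{lem:degreebound}(i).

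For part (i), I would let $Z = |\{u \in V : \deg_{G_{n, P}}(u) < k\}|$, so that $\Prob{\d(G_{n, P}) \ge k} = \Prob{Z = 0}$, and argue that $Z$ converges in distribution to $\Po(\g_k)$ by checking that each factorial moment $\E{Z(Z-1)\cdots(Z-j+1)}$ tends to $\g_k^j$. The first moment requires a sharpening of Lemma~\ref{lem:degreebound}(i): since $\deg(u)$ is a sum of independent Bernoullis with total mean $d_P(u) = \Theta(\ln n)$ and individual maximum $\|P\| = o(1)$ (Definition~\ref{def:degrees}(c)), Le Cam's Poisson-binomial inequality gives $\Prob{\deg(u) < k} = (1+o(1))\Prob{\Po(d_P(u)) < k}$. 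Because $d_P(u) \to \infty$, the Poisson CDF is asymptotic to its top term, so $\Prob{\deg(u) < k} = (1+o(1)) e^{-d_P(u)} d_P(u)^{k-1}/(k-1)!$, and summing over $u$ identifies the limit of $\E{Z}$ with $\g_k$ (absorbing the combinatorial factor into the paper's normalisation of $\g_k$).

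For the higher factorial moments, fix distinct $u_1, \ldots, u_j$. Only the $\binom{j}{2} = O(1)$ edges inside $\{u_1, \ldots, u_j\}$ couple the events $\{\deg(u_i) < k\}$, and each is present with probability $O(\|P\|) = o(1)$; conditioning on them shifts each $\deg(u_i)$ by $O(1)$, which is absorbed by the Poisson approximation above, so
\begin{equation}
\Prob{\bigcap_{i=1}^j \{\deg(u_i) < k\}} = (1+o(1)) \prod_{i=1}^j \Prob{\deg(u_i) < k}.
\end{equation}
Summing over ordered tuples of distinct vertices gives $\E{Z^{(j)}} \to \g_k^j$, and the classical moment criterion then delivers $Z \Rightarrow \Po(\g_k)$ and hence $\Prob{Z = 0} \to e^{-\g_k}$.

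For part (ii), set $Z_t = |\SML(t)|$. Lemma~\ref{lem:degreebound}(i) together with $d_R(u) = \Theta(\ln n)$ gives $\E{Z_t} = (\ln n)^{O(1)} \sum_u e^{-t d_R(u)}$. Using $\g_1(R) = \sum_u e^{-d_R(u)} = 1$ and $d_R(u) \ge d = \Theta(\ln n)$, we obtain $\E{Z_{1+\e}} \le (\ln n)^{O(1)} e^{-\e d}$, which tends to $0$ because $\e d = \omega(\ln\ln n)$; Markov then gives $\t_k \le 1 + \e$ whp. Symmetrically, $\E{Z_{1-\e}} \ge (\ln n)^{-O(1)} e^{\e d} \to \infty$. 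To obtain $\t_k > 1 - \e$ whp I would control the variance: for distinct $u, v$ the events $\{u \in \SML(t)\}$ and $\{v \in \SML(t)\}$ are coupled only through the single edge $\{u, v\}$, so a short calculation using $\|R\| = o(1)$ yields
\begin{equation}
\sum_{u \ne v} \bigl[\Prob{u, v \in \SML(t)} - \Prob{u \in \SML(t)} \Prob{v \in \SML(t)}\bigr] = o(\E{Z_t}^2),
\end{equation}
so $\Var{Z_{1-\e}} = o(\E{Z_{1-\e}}^2)$ and Chebyshev delivers $\t_k > 1 - \e$ whp.

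The main obstacle is the sharp first-moment asymptotic in part (i): Lemma~\ref{lem:degreebound}(i) pins down $\Prob{\deg(u) < k}$ only up to a factor of $d_P(u)^{O(1)}$, so identifying the precise constant $\g_k$ requires a more delicate Poisson-binomial comparison such as Le Cam or Chen-Stein. The factorial moment estimates and the part (ii) bounds are routine consequences of the same asymptotic.
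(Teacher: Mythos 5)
Your overall architecture matches the paper's: part (i) is proved there too by factorial moments of the count of low-degree vertices (Poisson convergence for finite $\g_k$, Chebyshev for divergent $\g_k$), with the dependence between the events $\{\deg(u_i)<k\}$ handled exactly as you propose, by conditioning on the $O(1)$ edges inside $\{u_1,\dots,u_j\}$, each present with probability $O(\|P\|)=o(1)$. For part (ii) you re-derive first and second moment bounds directly, whereas the paper simply applies part (i) to the matrices $P(u,v)=1-e^{-(1\pm\e)R(u,v)}$ and checks that $\g_k(P)\to 0$ at time $1+\e$ and $\g_k(P)\to\infty$ at time $1-\e$; the two routes are equivalent, since the divergent case of part (i) already contains your Chebyshev step. (One notational slip: your $Z_t=|\SML(t)|$ counts vertices of degree less than $D$, not less than $k$; you want the degree-$<k$ count.)

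The one genuine gap is precisely the step you flag as the main obstacle, and the tools you name do not close it. Le Cam and Chen--Stein give \emph{additive} (total-variation) errors of order $\sum_v P(u,v)^2\le\|P\|\,d_P(u)=O\!\left((\ln n)^2 n^{-\g}\right)$, while the probability being approximated, $\Prob{\deg(u)<k}=\Theta\!\left(e^{-d_P(u)}d_P(u)^{k-1}\right)=\Theta(n^{-1}\mathrm{polylog}(n))$ in the regime where $\g_k(P)$ converges in $(0,\infty)$, can be far smaller than that additive error: Definition~\ref{def:degrees} only guarantees $\g>0$, not $\g>1$. A TV bound therefore cannot certify the multiplicative $(1+o(1))$ you need for the first moment. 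What is required is a \emph{relative-error} lower-tail estimate, which the paper asserts as \eqref{eq:Pxl}: with $\E{X_v}=1-e^{-\m_v}$, $\m=\sum_v\m_v\to\infty$ and $\max_v\m_v/\m=o(1)$, one has $\Prob{X\le\ell}=e^{-\m}\m^\ell/\ell!\,(1+O(\max_v\m_v/\m))$. This follows from the exact identities
\begin{equation}
\Prob{X=0}=e^{-\m},\qquad \Prob{X=j}=e^{-\m}\sum_{|S|=j}\prod_{v\in S}\left(e^{\m_v}-1\right),
\end{equation}
together with $\sum_{|S|=j}\prod_{v\in S}(e^{\m_v}-1)=(1+o(1))\m^j/j!$. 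Your part (ii) is unaffected by this issue, since there a $d^{O(1)}$ multiplicative slack is harmless; but part (i) needs the sharper computation rather than a TV-distance Poisson approximation.
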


We also note the following simple consequence of conditions~(\ref{item:scale}) and~(\ref{item:powerlaw}) of Definition~\ref{def:degrees}.
\begin{lemma}\label{lem:sublinear}
Suppose $R\in \RM$ has stationary distribution $\s$, and $A\subseteq V$. Then $\s(A) = o(1)$ if and only if $|A| = o(n)$.
\end{lemma}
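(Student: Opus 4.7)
The plan is to prove both implications of the equivalence directly from the two conditions on $R \in \RM$ that govern the stationary distribution, namely~(\ref{item:scale}) and~(\ref{item:powerlaw}) of Definition~\ref{def:degrees}. Neither direction requires any random walk machinery beyond the definitions, so this should be a short calculation rather than a real argument; I expect no substantive obstacle.

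For the direction $|A| = o(n) \Rightarrow \s(A) = o(1)$, I would use~(\ref{item:powerlaw}) directly. Since $R \in \RM$, there is a constant $\a \in [0, 1/2)$ such that $\s(A) \le b(|A|/n)^{1-2\a}$ for every $A$, and because $1 - 2\a > 0$ the right-hand side tends to $0$ whenever $|A|/n \to 0$, giving $\s(A) = o(1)$.

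For the reverse direction $\s(A) = o(1) \Rightarrow |A| = o(n)$, I would use~(\ref{item:scale}) to get a matching lower bound on $\s(A)$ in terms of $|A|/n$. Condition~(\ref{item:scale}) gives $d_R(u) \ge d$ for every $u$ and $d_R(V) \le bdn$, so
\al{
\s(A) = \frac{d_R(A)}{d_R(V)} = \frac{\sum_{u \in A} d_R(u)}{d_R(V)} \ge \frac{|A| \cdot d}{bdn} = \frac{|A|}{bn}.
}
Hence $|A| \le bn \cdot \s(A)$, and $\s(A) = o(1)$ forces $|A| = o(n)$.

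Combining the two bounds yields the equivalence. The only point worth noting is that both constants $b$ and the exponent $1-2\a$ are absolute (not depending on $n$), so the $o(1)$ and $o(n)$ asymptotics transfer cleanly between the two sides. I don't anticipate any hidden difficulty.
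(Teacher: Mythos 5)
Your proof is correct. The paper does not actually include a proof of Lemma~\ref{lem:sublinear} (it is treated as an immediate consequence of Definition~\ref{def:degrees}), and your two-line argument — condition~(\ref{item:powerlaw}) for the upper bound $\s(A) \le b(|A|/n)^{1-2\a}$ and condition~(\ref{item:scale}) for the lower bound $\s(A) \ge |A|/(bn)$ — is exactly the intended derivation.
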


\subsection{The expander mixing lemma}

For a matrix $A$ indexed by $V$ and sets $S, T\subseteq V$ we write
$$
A(S, T) = \sum_{u\in S, v\in T} A(u, v),
$$
noting that pairs $(u,v)$ with $u,v\in S\cap T$ are counted twice. We will use the following version of the well-known Expander Mixing Lemma~\cite{AlonChung88}.
\begin{lemma}\label{lem:EML}
Suppose $R\in RM$ has transition matrix $M$ with stationary distribution $\s$. Then for any $A, B\subseteq V$,
\al{
  M(A, B) = |A|\s(B) + O\left(\lam(R)\sqrt{n|A|\s(B)}\right).\label{eq:fullEML}
}
In particular, the following holds.
\begin{enumerate}[(i)]
\item $R(A, B) = \OM(dn)$ for any $A, B\subseteq V$ with $|A|, |B| = \OM(n)$.
\item\label{item:kappa} There exists a constant $c > 0$ such that 
$$
\left|\left\{u\in A : M(u, A) \ge \bfrac{|A|}{n}^c\right\}\right| = o(|A|) \quad \text{for all $|A| \le n/2$}.
$$
\end{enumerate}
\end{lemma}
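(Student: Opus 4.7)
The plan is to derive the EML identity \eqref{eq:fullEML} from a spectral decomposition of the reversible chain, then obtain (i) and (ii) as bookkeeping consequences. For \eqref{eq:fullEML}, I would introduce the symmetric matrix $\widetilde M(u,v)=R(u,v)/\sqrt{d_R(u)d_R(v)}$, which is similar to $M$ via $\widetilde M = D_\sigma^{1/2} M D_\sigma^{-1/2}$ (so it shares $M$'s real eigenvalues $1=\lambda_1\ge \lambda_2\ge\dots$) and admits an orthonormal eigenbasis $\tilde v_1,\dots,\tilde v_n$ with $\tilde v_1(u)=\sqrt{\sigma(u)}$. Rewriting $M(A,B) = (D_\sigma^{-1/2}\mathbf{1}_A)^T \widetilde M (D_\sigma^{1/2}\mathbf{1}_B)$ and expanding each factor in the eigenbasis, the $\tilde v_1$-components are $|A|$ and $\sigma(B)$, contributing $|A|\sigma(B)$; Cauchy--Schwarz combined with $|\lambda_i|\le \lambda(R)$ for $i\ge 2$ bounds the remainder by $\lambda(R)\|D_\sigma^{-1/2}\mathbf{1}_A\|_2\|D_\sigma^{1/2}\mathbf{1}_B\|_2$. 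Condition~(a) yields $\sigma(u)\ge 1/(bn)$, hence $\|D_\sigma^{-1/2}\mathbf{1}_A\|_2^2\le bn|A|$ while $\|D_\sigma^{1/2}\mathbf{1}_B\|_2^2=\sigma(B)$, producing the error $O(\lambda(R)\sqrt{n|A|\sigma(B)})$.

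Part~(i) is then immediate: for $|A|,|B|=\Omega(n)$, condition~(a) gives $\sigma(B)\ge |B|/(bn)=\Omega(1)$, so the main term $|A|\sigma(B)$ is $\Omega(n)$ and dominates the $o(n)$ error since $\lambda(R)=o(1)$; then $R(A,B)=\sum_{u\in A}d_R(u)M(u,B)\ge d\cdot M(A,B)=\Omega(dn)$.

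For (ii), I apply \eqref{eq:fullEML} with $B=A$, use condition~(b) to bound $\sigma(A)\le b(|A|/n)^{1-2\alpha}$, and apply Markov to $\sum_{u\in A} M(u,A)$: letting $S=\{u\in A:M(u,A)\ge (|A|/n)^c\}$, I get $|S|/|A|\le b(|A|/n)^{1-2\alpha-c}+O(\lambda(R)(n/|A|)^{\alpha+c})$. Choosing $c<1-2\alpha$ sends the first term to $0$ for $|A|=o(n)$; in the leftover regime $|A|=\Theta(n)$ the threshold $(|A|/n)^c$ is a constant exceeding $\sigma(A)$, so the variance estimate $\mathrm{Var}_\sigma(M(\cdot,A))\le \lambda(R)^2\sigma(A)$ (immediate from the same spectral expansion) combined with Chebyshev forces $M(u,A)$ to concentrate around $\sigma(A)$ and leaves only a $o(|A|)$ exceptional set. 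For the spectral-error term I would split on $|A|\|M\|$: if $|A|\|M\|<(|A|/n)^c$ the trivial bound $M(u,A)\le |A|\|M\|$ makes $S$ empty, and otherwise $n/|A|\le (n\|M\|)^{1/(1-c)}$, so the hypothesis $\lambda(R)\le (n\|M\|)^{-\alpha-\gamma}$ gives $\lambda(R)(n/|A|)^{\alpha+c}\le (n\|M\|)^{(\alpha+c)/(1-c)-\alpha-\gamma}$, which is $o(1)$ once $c<\gamma/(1+\alpha+\gamma)$.

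The main obstacle is selecting a single constant $c>0$ (any positive value below $\min\{1-2\alpha,\gamma/(1+\alpha+\gamma)\}$ should work) so that the Markov bound, the trivial-bound split, and the Chebyshev estimate all fire simultaneously across every $|A|\le n/2$; juggling the power-law bound on $\sigma(A)$, the spectral gap $\lambda(R)$, and the inhomogeneity $\|M\|$ is what makes this calibration the delicate step, while the EML identity itself and part (i) are essentially mechanical.
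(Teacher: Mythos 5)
Your derivation of \eqref{eq:fullEML} and of part (i) is correct and matches the paper's route: the paper works with the inner product $\langle f,g\rangle_\s$ and its Lemma~\ref{lem:norm} rather than with the symmetrized matrix $D_\s^{1/2}MD_\s^{-1/2}$, but these are the same spectral decomposition, and the bounds $\sum_{u\in A}\s(u)^{-1}\le bn|A|$ and $\s(B)=\OM(1)$ for $|B|=\OM(n)$ are used identically. For part (ii) you genuinely diverge. The paper applies \eqref{eq:fullEML} to the pair $(A',A)$ where $A'$ is the exceptional set itself, so the error term carries a factor $\sqrt{1/|A'|}$ and one reads off $|A'|=o(|A|)$ directly from the resulting inequality $(n/|A|)^c\le 1+o(\sqrt{|A|/|A'|})$; this handles all scales $(n^c\|M\|)^{-1/(1-c)}\le|A|\le n/2$ in one pass. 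You instead run plain Markov on $M(A,A)$, which forces you to treat $|A|=\Theta(n)$ separately via the variance bound $\mathrm{Var}_\s(M(\cdot,A))\le\lam^2\s(A)$ and Chebyshev. Both work, and your split on $|A|\|M\|$ versus $(|A|/n)^c$ to control the spectral error term is exactly the paper's move. The one claim you should shore up is that in the regime $|A|=\Theta(n)$ the threshold $(|A|/n)^c$ exceeds $\s(A)$ by a constant: the power-law bound $\s(A)\le b(|A|/n)^{1-2\a}$ can be vacuous there when $b$ is large, so you need instead $\s(A)\le 1-\s(\ol A)\le 1-1/(2b)$ (from $\s(u)\ge 1/bn$) and then $c$ small enough that $(|A|/n)^c>1-1/(2b)$ throughout the constant-fraction range. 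You will also want to glue the two regimes at a slowly vanishing cutoff $|A|=\d(n)n$ so that the Markov term $b\d^{1-2\a-c}$ and the Chebyshev term $\lam^2\d^{-2\a-2c}$ both vanish, e.g.\ $\d=\lam^{\e'}$ for small $\e'>0$. It is worth noting that the paper's own write-up drops the constant $b$ at the analogous step (claiming $(|A|/n)^{1-2\a-2c}\le 1$ suffices), so your version, once the above is added, is if anything the more carefully calibrated of the two.
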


\subsection{Probabilistic bounds}

We consider the following lemma well-known, and state it without proof.
\begin{lemma}\label{lem:exprank}
Suppose $X_1,\dots,X_m$ are independent exponential random variables with finite respective rates $r_1,\dots,r_m > 0$. Let $r = r_1+\dots + r_m$ and suppose $r_i \le \e r$ for all $i$, for some $\e > 0$. Let $X_{(D)}$ be the $D$--th smallest value in the family. Then for $D \le 1/2\e$,
$$
\Prob{X_i \le X_{(D)}} \le 2D\frac{r_i}{r}.
$$
\end{lemma}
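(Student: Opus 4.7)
The plan is to decompose the event $\{X_i \le X_{(D)}\}$ by the rank of $X_i$ in the family, writing
\begin{equation}
\Prob{X_i \le X_{(D)}} = \sum_{d=1}^D \Prob{X_i = X_{(d)}},
\end{equation}
where all ties have probability zero by continuity of the joint distribution. The goal then reduces to bounding each summand by $2r_i/r$, which immediately yields the stated $2D r_i/r$.

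For a fixed $d$, I would condition on the (random) set $J = \{j_1,\dots,j_{d-1}\}$ of indices of the $d-1$ smallest values, together with the value $X_{(d-1)}$ itself. By the memoryless property of exponentials, conditional on $J$ and $X_{(d-1)}$, the shifted variables $\{X_k - X_{(d-1)} : k\notin J\}$ are independent exponentials with their \emph{original} rates $r_k$. Therefore the identity of the next variable to fall is distributed proportionally to these rates, giving
\begin{equation}
\Prob{X_i = X_{(d)} \mid J, X_{(d-1)}} = \ind_{\{i\notin J\}} \cdot \frac{r_i}{r - \sum_{j\in J} r_j}.
\end{equation}

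Now I would use the hypothesis $r_j \le \e r$ together with $|J| = d-1 \le D-1$, which bounds the conditioning sum by $(D-1)\e r < r/2$; the strict inequality is exactly what the assumption $D \le 1/(2\e)$ buys. Hence the denominator is at least $r/2$, so the conditional probability is at most $2r_i/r$. Taking expectation over $J$ and $X_{(d-1)}$ and summing $d = 1,\dots,D$ closes the proof. I do not expect any serious obstacle: the numerical threshold $D \le 1/(2\e)$ is tuned precisely so that no subset of $D-1$ rates can consume more than half of the total, which is what produces the clean factor of $2$. The one step warranting a second look is the conditional description of the post-$X_{(d-1)}$ variables, but this is the standard memoryless race interpretation underlying exponential order statistics.
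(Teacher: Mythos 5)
Your argument is correct, and it is the standard ``exponential race'' proof that the paper implicitly has in mind: the paper explicitly states this lemma without proof, calling it well-known. The decomposition by rank, the memoryless description of the post-$X_{(d-1)}$ variables, and the bound $\sum_{j\in J} r_j \le (D-1)\e r < r/2$ forced by $D\le 1/(2\e)$ all check out, so there is nothing to fix.
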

We will also use the following Chernoff bounds.
\begin{lemma}\label{lem:chernoff}
Suppose $X$ is a finite set and let $\s_x \in [0, 1]$ for all $x\in X$. Let $\m = \sum_{x\in X} \s_x$.
\begin{enumerate}[(i)]
\item Suppose $S\subseteq X$ is a random set obtained by including any $x\in X$ independently with probability $\s_x$. If $\f \le \frac12 \m$, then
\al{
  \Prob{|S| \le \f} & \le \exp\left\{-\frac{\m}{8}\right\}.\label{eq:chernoff2}
}
\item Suppose $T\subseteq X$ is a random set with $\Prob{A\subseteq T} \le \prod_{x\in A} \s_x$ for all $A\subseteq X$. If $\f \ge 5\m$, then
\al{
  \Prob{|T| \ge \f} \le \bfrac{\m}{\f}^{\f/2}. \label{eq:chernoff4}
}
\end{enumerate}
\end{lemma}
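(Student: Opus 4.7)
Both parts are standard Chernoff-type estimates; the main care needed is in (ii), where we only have upper bounds on $\Prob{A\subseteq T}$ rather than independence, so the MGF method is not directly available. I would handle (i) by the MGF method and (ii) via factorial moments.

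For (i), write $|S| = \sum_{x\in X}\ind[x\in S]$ and use the standard product MGF bound $\E{e^{-t|S|}} = \prod_x(1 - \s_x(1-e^{-t})) \le \exp(-(1-e^{-t})\m)$. Markov gives $\Prob{|S|\le \f} \le \exp(t\f - (1-e^{-t})\m)$; taking $t=\ln 2$ and using $\f \le \m/2$ yields $\Prob{|S|\le \f} \le \exp(-\tfrac12\m(1-\ln 2))$, which is at most $e^{-\m/8}$ since $(1-\ln 2)/2 > 1/8$. (Equivalently, this is the standard Chernoff lower tail $\Pr(|S|\le(1-\tfrac12)\m)\le e^{-\m/8}$.)

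For (ii), the key identity is
\begin{equation}
\E{\binom{|T|}{k}} \;=\; \sum_{A\subseteq X,\, |A|=k} \Prob{A\subseteq T} \;\le\; e_k(\s_1,\dots,\s_m) \;\le\; \frac{\m^k}{k!},
\end{equation}
where $e_k$ is the $k$-th elementary symmetric polynomial and the last step is Maclaurin's inequality (or just expanding $\m^k = (\sum_x \s_x)^k \ge k!\,e_k(\s)$). Expanding $(1+\d)^{|T|} = \sum_k \d^k \binom{|T|}{k}$ and taking expectations yields $\E{(1+\d)^{|T|}} \le \sum_k (\m\d)^k/k! = e^{\m\d}$, which is precisely the MGF bound one would obtain from independent Bernoullis of means $\s_x$. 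Markov then gives $\Prob{|T|\ge \f} \le (1+\d)^{-\f} e^{\m\d}$, and optimizing at $1+\d = \f/\m$ produces the standard upper-tail form $\Prob{|T|\ge\f} \le e^{\f-\m}(\m/\f)^\f$.

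It remains to verify that $\f\ge 5\m$ implies $e^{\f-\m}(\m/\f)^\f \le (\m/\f)^{\f/2}$. Writing $r = \f/\m\ge 5$ and taking logarithms, this reduces to $r-1 \le (r/2)\ln r$. One checks $(5/2)\ln 5 \approx 4.02 > 4$, and the derivative of $(r/2)\ln r - r + 1$ equals $(\ln r - 1)/2 \ge 0$ for $r\ge e$, so the inequality holds throughout $[5,\infty)$. The main---and essentially only---conceptually nontrivial step is the factorial-moment bridge in (ii), which substitutes for the missing independence by way of $e_k(\s)\le \m^k/k!$; everything else is bookkeeping. Note that the constant $5$ is nearly sharp (the root of $r-1 = (r/2)\ln r$ is slightly below $5$), so this kind of argument is the right one for the stated bound.
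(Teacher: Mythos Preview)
Your proof is correct and follows essentially the same line as the paper's. The paper's proof is a one-liner: it observes that the hypothesis on $T$ implies $\E{|T|^\ell}\le \E{|S|^\ell}$ for all $\ell\ge 0$ and then defers to a standard reference for the Chernoff bounds. Your factorial-moment identity $\E{\binom{|T|}{k}}\le \mu^k/k!$ is exactly this moment-domination step (ordinary moments and factorial moments are interchangeable here), and your expansion of $\E{(1+\delta)^{|T|}}$ and subsequent optimization is precisely what ``standard methods'' amounts to; you have simply written out the details that the paper omits.
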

\begin{proof}
Note that the condition on $T$ implies that $\E{|T|^\ell} \le \E{|S|^\ell}$ for all $\ell\ge 0$. The bounds then follow by standard methods, see e.g. \cite[Section 21.4]{FriezeKaronski}.
\end{proof}

\subsection{A matrix lemma}

Suppose $I$ is a totally ordered set, $A$ a set and ${\bf a} = (a_1,\dots,a_\ell)\in A^\ell$ a sequence in $A$. Say that a function $f : I\to A$ {\em respects} ${\bf a}$ if the sequence $(f(i))_{i\in I}$ is a subsequence of ${\bf a}$.
\begin{lemma}\label{lem:observation}
Suppose $\t : I\times J \to A$ and ${\bf a} = (a_1,\dots,a_\ell)\in A^\ell$, $\ell \ge 1$. Suppose $I$ can be totally ordered so that $i\mapsto \t(i, j)$ respects ${\bf a}$ for each $j\in J$. Let $\pi_I, \pi_J$ be finite measures on $I, J$, respectively. Then there exist $S\subseteq I, T\subseteq J$ with $\pi_I(S) \ge \pi_I(I) / \ell$ and $\pi_J(T) \ge \pi_J(J) / \ell$, such that $\t$ is constant on $S\times T$.
\end{lemma}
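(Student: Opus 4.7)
The plan is to induct on $\ell$. Unpacking the hypothesis, for each $j \in J$ there is a weakly increasing function $\f_j : I \to \{1, \dots, \ell\}$ with $\t(i, j) = a_{\f_j(i)}$, where $I$ carries its total order (this is exactly what ``$i\mapsto \t(i,j)$ respects ${\bf a}$'' says, once we record which entry of ${\bf a}$ each value came from). The base case $\ell = 1$ is trivial: $\t \equiv a_1$ on $I\times J$, so $S = I$, $T = J$ works.

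For the inductive step I would split $I$ at a ``first $1/\ell$-quantile'' of $\pi_I$. Concretely, set $\b^* := \inf\{\b \in I : \pi_I(\{i \le \b\}) \ge \pi_I(I)/\ell\}$. Right-continuity of the CDF $\b \mapsto \pi_I(\{i \le \b\})$ gives $\pi_I(\{i \le \b^*\}) \ge \pi_I(I)/\ell$, while $\pi_I(\{i \ge \b^*\}) \ge \pi_I(I)(\ell-1)/\ell$ follows from the defining infimum. Now define $T_1 := \{j \in J : \f_j(\b^*) = 1\}$ and split into two cases.

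If $\pi_J(T_1) \ge \pi_J(J)/\ell$, then $S := \{i : i \le \b^*\}$ together with $T := T_1$ yields a constant rectangle with the required measure bounds: for $(i,j) \in S\times T$, monotonicity gives $\f_j(i) \le \f_j(\b^*) = 1$, hence $\t(i,j) = a_1$. Otherwise $J' := J \setminus T_1$ has $\pi_J(J') > \pi_J(J)(\ell-1)/\ell$, and for every $j \in J'$ and $i \ge \b^*$ we have $\f_j(i) \ge \f_j(\b^*) \ge 2$. Thus the restriction of $\t$ to $I' \times J'$ with $I' := \{i : i \ge \b^*\}$ respects the shorter sequence $(a_2, \dots, a_\ell)$, and the inductive hypothesis produces $S' \subseteq I'$ and $T' \subseteq J'$ on which $\t$ is constant, with $\pi_I(S') \ge \pi_I(I')/(\ell-1) \ge \pi_I(I)/\ell$ and $\pi_J(T') \ge \pi_J(J')/(\ell-1) > \pi_J(J)/\ell$. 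Taking $S = S'$, $T = T'$ closes the induction.

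The argument is combinatorial pigeonhole executed via a monotone split, and the only minor subtlety I anticipate is choosing $\b^*$ robustly when $\pi_I$ has atoms, which is handled by right-continuity of the CDF. I do not foresee a serious obstacle.
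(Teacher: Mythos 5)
Your proof is correct and takes essentially the same route as the paper: induct on $\ell$, and either a $1/\ell$-fraction of $J$ agrees on an extremal entry of ${\bf a}$ over a $1/\ell$-fraction of $I$, or one passes to the length-$(\ell-1)$ sequence on the complementary pieces. The only difference is cosmetic --- you peel $a_1$ from the front by pivoting at a $\pi_I$-quantile point $\b^*$ (and your explicit weakly increasing index functions $\f_j$ are the right way to make ``respects'' precise), whereas the paper peels $a_\ell$ from the back by intersecting, over the $j$ for which they are large, the final segments $\{i : \t(i,j)=a_\ell\}$.
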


\subsection{Mixing in simple random walks}\label{sec:onestep}

Let $M$ be a transition matrix with stationary distribution $\s$ with $\s(u) > 0$ for all $u\in V$. For a probability measure $\pi$ on $V$, define
$$
\m_\s(\pi) = \sqrt{\left(\sum_{u\in V} \frac{\pi(v)^2}{\s(v)}\right) - 1}.
$$
Note that $\m_\s(\pi) \ge 0$ with equality if and only if $\pi = \s$. We use $\m_\s$ as a measure of distance from stationarity.

\begin{lemma}\label{lem:norm}
Suppose $R\in \RM$ has transition matrix $M$. Let $\pi_0$ be a probability measure on $V$, and define $\pi_1(v) = \sum_u \pi_0(u)M(u, v)$. Then
\al{
  \m_\s(\pi_1) \le \lam(R) \m_\s(\pi_0).
}
\end{lemma}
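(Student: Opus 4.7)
The plan is to recast the statement in the $L^2(\sigma)$ inner product and exploit the fact that $M$ is self-adjoint there, which follows from the symmetry of $R$. Because $R(u,v) = R(v,u)$, we have the detailed balance relation
\[
\sigma(u) M(u,v) = \frac{R(u,v)}{d_R(V)} = \sigma(v) M(v,u),
\]
so $M$ is reversible with respect to $\sigma$. Define the inner product $\langle f, g\rangle_\sigma = \sum_u f(u) g(u) \sigma(u)$. Reversibility is equivalent to $\langle Mf, g\rangle_\sigma = \langle f, Mg\rangle_\sigma$, i.e.\ $M$ is self-adjoint on $L^2(\sigma)$.

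Next I would translate $\mu_\sigma$ into this Hilbert space. Set $f = \pi_0/\sigma$, treated as a function on $V$. Then $\|f\|_\sigma^2 = \sum_u \pi_0(u)^2/\sigma(u) = \mu_\sigma(\pi_0)^2 + 1$, and a short computation using detailed balance shows
\[
\frac{\pi_1(v)}{\sigma(v)} = \frac{1}{\sigma(v)}\sum_u \pi_0(u)\,M(u,v) = \sum_u M(v,u)\frac{\pi_0(u)}{\sigma(u)} = (Mf)(v),
\]
so $\mu_\sigma(\pi_1)^2 + 1 = \|Mf\|_\sigma^2$. The lemma therefore reduces to the operator-norm bound $\|M(f - \mathbf{1})\|_\sigma \le \lambda(R)\,\|f-\mathbf{1}\|_\sigma$.

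For that I would use the spectral theorem: since $M$ is self-adjoint on $L^2(\sigma)$ there is an orthonormal basis $\phi_1 = \mathbf{1}, \phi_2, \dots, \phi_n$ of eigenvectors with real eigenvalues $1 = \lambda_1 \ge \lambda_2 \ge \dots \ge \lambda_n \ge -1$. Writing $f = \sum_i c_i \phi_i$, the constraint $\sum_u \pi_0(u) = 1$ gives $c_1 = \langle f, \mathbf{1}\rangle_\sigma = 1$. Orthonormality then gives
\[
\mu_\sigma(\pi_0)^2 = \|f\|_\sigma^2 - 1 = \sum_{i \ge 2} c_i^2,
\]
while $Mf = \mathbf{1} + \sum_{i\ge 2} \lambda_i c_i \phi_i$, so
\[
\mu_\sigma(\pi_1)^2 = \|Mf\|_\sigma^2 - 1 = \sum_{i\ge 2} \lambda_i^2 c_i^2 \le \lambda(R)^2 \sum_{i\ge 2} c_i^2 = \lambda(R)^2 \mu_\sigma(\pi_0)^2,
\]
and taking square roots finishes the proof. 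There is no real obstacle here beyond making sure the identifications $\pi/\sigma \leftrightarrow f$ and $\pi \mapsto \pi M \leftrightarrow f \mapsto Mf$ are carried out with the correct use of reversibility; once those are in place, the bound is a one-line consequence of $M$ being a self-adjoint contraction on the subspace $\mathbf{1}^\perp$ with operator norm $\lambda(R)$.
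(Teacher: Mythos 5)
Your proof is correct and follows essentially the same route as the paper: both establish reversibility of $M$ with respect to $\s$ from the symmetry of $R$, identify $\m_\s(\pi)$ with $\|\pi/\s - \ind\|_\s$, and expand in a $\langle\cdot,\cdot\rangle_\s$-orthonormal eigenbasis so that the bound reduces to $\lam_j^2 \le \lam(R)^2$ on the components orthogonal to $\ind$. No discrepancies to report.
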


\section{On alternating walks}\label{sec:alternatingwalks}

Suppose $F = (V, E)$ is a graph, and $R$ is a rate matrix on $V$. A walk $W = (w_0,w_1,\dots,w_\ell)$ on $V$ is {\em $F$-alternating} if $\{w_i, w_{i+1}\}\in F$ for all odd $i$, and {\em strictly $F$-alternating} if also $\{w_i, w_{i+1}\}\notin F$ for all even $i$.

We need a way to measure the size of a family of $F$-alternating walks. This will be slightly cumbersome to define. Firstly, for any walk $W = (w_0,\dots,w_\ell)$ define
$$
R_\alt[W] = \prod_{j = 0}^{\fl{\ell/2}-1} R(w_{2j}, w_{2j+1}).
$$
For a family of walks $\WW$ let $R_\alt[\WW] = \sum_{W\in \WW}R_\alt[W]$.

For any edge set $E$ with $E\cap G = \emptyset$ we also define
$$
R_G[E] = \prod_{\{u, v\}\in E} R(u, v).
$$
If $E\cap G \ne \emptyset$ let $R_G[E] = 0$. If $\EE$ is a family of edge sets, let $R_G[\EE] = \sum_{E\in \EE} R_G[E]$.

For a walk $W$ let $\odd(W)$ be the set of edges $\{\{w_{2i}, w_{2i+1}\} : i \ge 0\}$. Note that if $W$ is a walk which repeats no vertex and is strictly $G$-alternating, then $R_G[\odd(W)] = R_\alt[W]$. If $E$ is an edge set of size $r$, then the number of walks $W$ with $\odd(W) = E$ is $2^rr!$. We conclude that if $\WW$ is a family of non-repeating strictly $G$-alternating walks of length $2r-1$ and $\odd(\WW) = \{\odd(W) : W\in \WW\}$, then
\begin{equation}\label{eq:Galt}
R_G[\odd(\WW)] \ge \frac{1}{2^rr!} R_\alt[\WW].
\end{equation}

For a walk $W = (w_0,\dots,w_i)$ let $f(W) = w_i$ denote its final vertex. For a family $\WW$ of walks and $v\in V$ let $\WW^{\to v}$ be the set of $W\in \WW$ with $f(W) = v$. Let $(W, v_1,\dots,v_j) = (w_0,\dots,w_i, v_1,\dots,v_j)$, and for two walks $W_1 = (w_0,\dots,w_i)$ and $W_2 = (w_0',\dots,w_j')$ write
$$
W_1\circ W_2 = (w_0,\dots,w_i, w_j',\dots,w_0').
$$

\subsection{Mixing for alternating walks}\label{sec:altmix}

We define a random walk on $V$ as a probability measure $\pi$ on the set $V^\infty$ of infinite walks on $V$. For walks $W$ of length $\ell$, write $\pi(W) = \pi(\WW(W))$ where $\WW(W)$ is the family of walks agreeing with $W$ for the first $\ell$ steps. Define
$$
\pi(w_{j+1}\mid w_0,\dots,w_j) = \frac{\pi(w_0,\dots,w_{j+1})}{\pi(w_0,\dots,w_j)}
$$
whenever $\pi(w_0,\dots,w_j) > 0$. Define $\pi(v\mid W) = 0$ when $\pi(W) = 0$.

Suppose $G$ is a graph and $R$ a rate matrix. Recall the definition of $\wh N(A)$ from Section~\ref{sec:Hintro}. Starting at some (possibly random) initial point $w_0$, say that a random walk $\pi$ is an {\em $(R, G)$-alternating random walk} if for all $i\ge 0$,
\al{
  \pi(w_{2i + 1} \mid w_0,\dots,w_{2i}) & = M(w_{2i}, w_{2i+1}), \\
  \pi(w_{2i + 2} \mid w_0,\dots,w_{2i+1}) & = 0,\quad w_{2i+2}\notin \wh N_G(w_{2i+1}).
}
We let $\pi_i$ denote the measure on $V$ induced by $w_i$. A special case is the {\em simple, lazy $(R, G)$-alternating random walk} $\pi_{G}$ defined by
$$
\pi_G(w_{2i} \mid w_0,\dots,w_{2i-1}) = \frac{1}{d_G(w_{2i-1}) + 1}, \quad w_{2i} \in \wh N_G(w_{2i-1}),
$$
for all $i\ge 1$. If the initial vertex $x$ is specified, we denote the measure by $\pi_{G, x}$.

Note that if $W = (w_0,\dots,w_j)$ is a $G$-alternating walk and $\pi$ a random $(R, G)$-alternating walk, then
\begin{equation}
\pi(W\mid w_0) \le \prod_{i=0}^{\cl{j/2}-1}\frac{R(w_{2i}, w_{2i+1})}{d_R(w_{2i})}\le \frac{R_\alt[W]}{d^{\cl{j/2}}}. \label{eq:pitoR}
\end{equation}

For rate matrices $R$ define
\begin{equation}\label{eq:mixdef}
\mix(R) = \cl{2 - 2\frac{\ln (n\|M\|)}{\ln \lam(R)}},
\end{equation}
and note that $\mix(R) = O(1)$ for $R\in \RM$ since then $\lam(R) = (n\|M\|)^{-\OM(1)}$. The name is in reference to the following lemma.
\begin{lemma}\label{lem:weightmix}
Suppose $R \in \RM$, $\D(F)\le 2$, and that $\pi$ is an $(R, F)$-alternating random walk. Suppose $\th \le \lam(R)^{-1/4}$ tends to infinity with $n$, and that $j\ge \mix(R)$. Suppose $c > 0$ is a constant. There exists a constant $\r > 0$ such that if $\WW$ is a family of $F$-alternating walks $W$ of length $2j$, such that $d_R(v) < \th d$ for all $v\in W$, and $\pi(\WW) \ge  c$, then there exists a vertex set $U_{2j}$ of size at least $\r n$  such that
\al{
  \pi(\WW^{\to u}) \ge \frac{\r}{n}, \quad \text{for all $u\in U_{2j}$}.
}
\end{lemma}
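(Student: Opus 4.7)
The plan is to iterate Lemma~\ref{lem:norm} through the alternating walk to control $\m_\s(\pi_i)$, and then extract the spread conclusion via Cauchy-Schwarz. Set $f_i(v) = \pi_i(v)/\s(v)$. The $R$-step acts as $f \mapsto Mf$ (using reversibility of $M$ with respect to $\s$), so Lemma~\ref{lem:norm} gives $\m_\s(\pi_{i+1}) \le \lam(R)\m_\s(\pi_i)$. The $F$-step is governed by a row-stochastic matrix $q$ supported in $F$ and acts as $f \mapsto \tilde q f$ with $\tilde q(w,v) = \s(v)q(v,w)/\s(w)$. Since $\D(F) \le 2$ and walks in $\WW$ remain in $V_\th := \{v : d_R(v) < \th d\}$, consecutive vertices $v, w$ on such a walk satisfy $\s(v)/\s(w) \le \th b$; a Cauchy-Schwarz bound on $\tilde q$ then gives an $\m_\s$-inflation of $O(\sqrt{\th})$ multiplicatively, plus an additive baseline term of the same order arising from $\|\tilde q 1 - 1\|_{L^2(\s)}$.

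Combining, each alternating pair yields the recursion $\m_\s(\pi_{2i+2}) \le O(\sqrt{\th}\,\lam(R))\,\m_\s(\pi_{2i}) + O(\sqrt{\th})$. The multiplicative factor $\sqrt{\th}\lam(R) = O(\lam(R)^{7/8}) = o(1)$ by the hypothesis $\th \le \lam(R)^{-1/4}$, so iterating $j \ge \mix(R)$ times drives the initial contribution $\m_\s(\pi_0) = O(\sqrt{n})$ to a negligible level using $\lam(R)^{\mix(R)} \le \lam(R)^2/(n\|M\|)^2$. A sharper analysis of the baseline term, exploiting Definition~\ref{def:degrees}(b) on the stationary measure's power-law tail together with the mean-one condition $\int f\,d\s = 1$, should reduce its effective size to a constant, giving $\m_\s(\pi_{2j}) = O(1)$.

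To extract the conclusion, write $\pi^\WW(u) = \pi(\WW^{\to u})$, so $\pi^\WW(u) \le \pi_{2j}(u)$ and $\sum_u \pi^\WW(u) = \pi(\WW) \ge c$. The $L^2(\s)$ bound then gives $\sum_u \pi^\WW(u)^2/\s(u) \le \m_\s(\pi_{2j})^2 + 1 = O(1)$. Let $A = \{u : \pi^\WW(u) \ge \r/n\}$. Since $\pi^\WW(V \setminus A) < \r$, we have $\pi^\WW(A) \ge c - \r$, and Cauchy-Schwarz gives
\[
(c - \r)^2 \le \pi^\WW(A)^2 \le \s(A)\sum_u \pi^\WW(u)^2/\s(u) = O(\s(A)).
\]
Choosing $\r < c/2$ forces $\s(A) \ge \OM(1)$, and Lemma~\ref{lem:sublinear} converts this to $|A| \ge \r n$ for a sufficiently small constant $\r > 0$. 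Taking $U_{2j} = A$ completes the proof.

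The main obstacle is securing a constant (rather than $\sqrt{\th}$) bound on $\m_\s(\pi_{2j})$: the naive $F$-step Cauchy-Schwarz produces a $\sqrt{\th}$ inflation that, left unchecked, would prevent the final application of Lemma~\ref{lem:sublinear}, since $\s(A) \ge \OM(1/\th)$ is sublinear in $\s$-measure and does not convert to a linear-sized $U_{2j}$. Resolving this requires exploiting the spectral contraction from $j \ge \mix(R)$ alternating pairs to absorb both the multiplicative and baseline $F$-step terms simultaneously, together with the condition $\th \le \lam(R)^{-1/4}$ that makes each pair strictly contractive.
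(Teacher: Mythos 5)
Your high-level framework (iterate Lemma~\ref{lem:norm} to contract $\m_\s$, then extract the set of heavy endpoints via Cauchy--Schwarz and Lemma~\ref{lem:sublinear}) is in the right spirit, and you correctly identify where it breaks: the final $F$-step inflates $\m_\s$ by a factor $\th^{\OM(1)}$, so $\m_\s(\pi_{2j})$ cannot in general be brought down to $O(1)$. However, your proposed fix is not viable. The $\th$-inflation from the \emph{last} $F$-step is irreducible: there is no subsequent $M$-step to contract it, and indeed one can have $\m_\s(\pi_{2j}) = \Theta(\sqrt{\th})$ even when $\pi_{2j-1}$ is essentially stationary. The ``sharper analysis of the baseline term'' via the power-law tail and mean-one normalisation that you gesture at cannot change this, since the problem is not the accumulated baseline from earlier steps (which Lemma~\ref{lem:semirandom} already kills) but the single unrecoverable final step. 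As you note, $\s(A) = \OM(1/\th) = o(1)$ falls on the wrong side of Lemma~\ref{lem:sublinear}.

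The paper resolves this by never trying to bound $\m_\s(\pi_{2j})$ at all. It truncates the walks by one step: let $\WW_{2j-1}$ be the family of length-$(2j-1)$ prefixes. After the final $M$-step the (modified) measure satisfies $\m_\s(\pi_{2j-1}^\th)^2 = O(\lam)$, so the extraction you describe works cleanly at index $2j-1$ and produces a set $U_{2j-1}$ of $v$ with $\pi^\th(\WW_{2j-1}^{\to v}) \ge c/2n$ and $|U_{2j-1}| = \OM(n)$. The bounded degree $\D(F) \le 2$ then closes the gap combinatorially: each $v \in U_{2j-1}$ distributes mass $\ge c/2n$ over the at most three vertices of $\wh N_F(v)$, so some $u \in \wh N_F(v)$ receives $\pi^\th(\WW^{\to u}) \ge c/6n$; conversely each $u$ has at most three $F$-predecessors, hence $|U_{2j}| \ge |U_{2j-1}|/3 = \OM(n)$. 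In short, the $\D(F) \le 2$ hypothesis is used twice in the paper — once inside Lemma~\ref{lem:semirandom} to control the $\m_\s$-inflation of intermediate $F$-steps, and once at the very end to transfer the large set across the last $F$-step — whereas your argument only exploits the first role and leaves the second uncovered.
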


Say that a measure $\m$ on $V$ is {\em near-uniform} if there exists a constant $c > 0$ such that $\m(v) \le c/n$ for all $v$. Say that a random walk $\pi$ is near-uniform if $\pi_0$ is near-uniform. Note that for a near-uniform $\pi$, \eqref{eq:pitoR} shows that for any family of walks $\WW$ of length $j$,
\begin{equation}\label{eq:mixpitoR}
\pi(\WW) = \sum_{u} \pi_0(u) \pi(\WW \mid u) \le \frac{c}{n} \frac{R_\alt[\WW]}{d^{\cl{j/2}}}
\end{equation}
For $S\subseteq V$ let $\t(S)$ be the random time at which a walk first visits $S$, and let $\t_\odd(S)$ be the first odd index for which it occurs.
\begin{lemma}\label{lem:hitlargeset}
Suppose $R\in \RM$ and that $G$ is light-tailed. Suppose $\pi$ is a near-uniform random $(R, G)$-alternating walk.
\begin{enumerate}[(i)]
\item\label{item:nohit} If $j\ge 0$ is constant and $|S| = o(n)$, then $\pi(\t(S) \le 2j) = o(1)$.
\item\label{item:dohit} If $|S| = \OM(n)$ then $\pi(\t(S) \le 1) = \OM(1)$.
\end{enumerate}
\end{lemma}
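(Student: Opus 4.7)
The near-uniformity of $\pi_0$ gives $\pi_0(v) \le c/n$, while Definition~\ref{def:degrees}(\ref{item:scale}) yields $\s(v) \ge 1/(bn)$, so $\sum_v \pi_0(v)^2/\s(v) \le bc^2$ and $\m_\s(\pi_0) = O(1)$. Applying Lemma~\ref{lem:norm} to the initial $M$-step gives $\m_\s(\pi_1) \le \lam(R)\m_\s(\pi_0) = o(1)$, and Cauchy--Schwarz in $L^2(\s)$ (noting $\sum_v(\pi_1(v)-\s(v))^2/\s(v) = \m_\s(\pi_1)^2$) converts this into $\|\pi_1-\s\|_1 \le \m_\s(\pi_1) = o(1)$. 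Therefore $\pi_1(S) \ge \s(S) - o(1)$, and Lemma~\ref{lem:sublinear} gives $\s(S) = \OM(1)$ when $|S| = \OM(n)$. Since $\pi(\t(S)\le 1) \ge \pi(w_1\in S) = \pi_1(S)$, this proves (\ref{item:dohit}).

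\textbf{Part (i), outline.} By union bound $\pi(\t(S) \le 2j) \le \sum_{i=0}^{2j} \pi_i(S)$, so it suffices to show $\pi_i(S) = o(1)$ for each $i \le 2j$. I would prove the stronger uniform claim by induction on $i$: \emph{for every $T \subseteq V$ with $|T| = o(n)$, $\pi_i(T) = o(1)$}. The base case $i = 0$ is $\pi_0(T) \le c|T|/n$. For the even inductive step, $w_i \in T$ forces $w_{i-1} \in \wh N_G(T)$ (by symmetry of $\wh N_G$), so $\pi_i(T) \le \pi_{i-1}(\wh N_G(T))$. Splitting $T = (T \cap S_\th) \cup (T \setminus S_\th)$ for a $\th \to \infty$ chosen so that $\th |T|/n \to 0$ (possible since $|T| = o(n)$, e.g.\ $\th = \sqrt{n/|T|}$), the light-tail property of $G$ bounds $|\wh N_G(T \cap S_\th)| \le |\wh N_G(S_\th)| = o(n)$, while each $u \in T \setminus S_\th$ has $d_G(u) < \th$, giving $|\wh N_G(T \setminus S_\th)| \le (\th + 1)|T| = o(n)$. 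The inductive hypothesis then applies to $\wh N_G(T)$ and closes this case.

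\textbf{The odd inductive step is the main obstacle.} Writing $\pi_i(T) = \sum_u \pi_{i-1}(u) M(u, T)$ and, for $\e > 0$, setting $T'_\e = \{u : M(u, T) > \e\}$, I split the sum to obtain $\pi_i(T) \le \pi_{i-1}(T'_\e) + \e$; it remains to choose $\e = o(1)$ with $|T'_\e| = o(n)$ so that the inductive hypothesis applies. The bound $M(T'_\e, T) \ge \e|T'_\e|$ combined with Lemma~\ref{lem:EML}, $M(T'_\e, T) \le |T'_\e|\s(T) + C\lam(R)\sqrt{n|T'_\e|\s(T)}$, rearranges to $|T'_\e| \le C^2\lam(R)^2 n\s(T)/(\e-\s(T))^2$ whenever $\e > \s(T)$. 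Taking $\e = 2\max(\s(T), \lam(R)^{2/3})$ gives $\e = o(1)$ and, checking separately the regimes $\s(T) \ge \lam(R)^{2/3}$ (where $\e = 2\s(T)$ yields $|T'_\e| \le C^2\lam(R)^2 n/\s(T) = O(\lam(R)^{4/3}n)$) and $\s(T) < \lam(R)^{2/3}$ (where $\e - \s(T) > \lam(R)^{2/3}$ yields $|T'_\e| \le C^2 \lam(R)^{2/3} n \s(T) = O(\lam(R)^{4/3} n)$), one obtains $|T'_\e| = o(n)$ in both cases, closing the induction. The delicate balance in the choice of $\e$ against both $\s(T)$ and $\lam(R)^{2/3}$ is the key analytic difficulty: a smaller $\e$ would leave $|T'_\e|$ super-linear, while a larger $\e$ would leave the additive error $\e$ non-negligible.
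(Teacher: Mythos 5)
Your proof is correct. Part (ii) is essentially the paper's argument: bound $\m_\s(\pi_0)=O(1)$ from near-uniformity, contract once with Lemma~\ref{lem:norm}, and convert $\m_\s(\pi_1)=o(1)$ into $\pi_1(S)\ge\s(S)-o(1)$. Part (i), however, takes a genuinely different route. The paper first observes that either $\t(S)=0$ or $\t(S)\ge\t_\odd(\wh N(S))$ (entering $S$ at an even step forces the walk to be in $\wh N_G(S)$ at the preceding odd step), so it only needs to control $\pi_i$ at odd times; there it reuses the $\chi^2$-contraction machinery (Lemma~\ref{lem:semirandom}, applied to the truncated walk $\pi^\th$ of \eqref{eq:pith}) to get $\m_\s(\pi_i^\th)^2=O(\lam)$ and hence $\pi_i^\th(\wh N(S))\le\s(\wh N(S))+o(1)$. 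You instead union-bound over all $2j+1$ times and run a two-case induction: the even step is handled by the same combinatorial observation (passing to $\wh N_G(T)$ and using the light tail to keep the set size $o(n)$), while the odd step replaces the spectral contraction by a direct threshold argument with the expander mixing lemma, with the choice $\e=2\max(\s(T),\lam^{2/3})$ balancing the additive loss against $|T'_\e|$. Your version is more elementary for part (i) in that it needs only Lemma~\ref{lem:EML} and avoids the $\pi^\th$ modification and the $\m_\s$ bookkeeping, at the cost of the threshold calibration and of checking that the $o(\cdot)$ bounds compose uniformly over the constantly many induction steps (which they do, since $j=O(1)$ and each step maps a set of size $o(n)$ to another set of size $o(n)$). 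The paper's version is shorter given that the contraction lemmas are already needed for Lemma~\ref{lem:weightmix}.
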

Lemma~\ref{lem:hitlargeset}~(\ref{item:nohit}) is particularly interesting for $S = S_\th$, which has $|S_\th| = o(n)$ if $R\in \RM$ and $G$ is light-tailed. We let $\DD_{j}^\th$ denote the event that $\t(S_\th) \le j$, and let $Z_j^\th(c)$ be the corresponding vertex set. Then $|Z_j^\th(c)| = o(n)$.

\begin{lemma}\label{lem:improper}
Suppose $R\in \RM$ and that $G$ is light-tailed. Let $j\ge 0$. Let $\CC_j^\th$ be the set of $G$-alternating walks $W = (w_0,\dots,w_j)$ with $d_G(w_i) < \th$ and $d_R(w_i) < \th d$ for all $i$, such that either (a) $|\{w_0,\dots,w_j\}| < j+1$ or (b) $W$ is not strictly $G$-alternating. If $\th^{j}\|M\| = o(1)$ then
$$
R_\alt[\CC_j^\th] = o(nd^{\cl{j/2}}).
$$
\end{lemma}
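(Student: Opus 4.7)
The plan is to split $\CC_j^\th$ into the subset $\CC_j^{\th,(a)}$ of walks with a repeated vertex and the subset $\CC_j^{\th,(b)}$ of walks that are $G$-alternating but not strictly $G$-alternating, and bound each contribution to $R_\alt[\CC_j^\th]$ separately.

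The common tool is a telescoping bound on the total $R$-weight of \emph{all} degree-constrained $G$-alternating walks of length $j$. Writing the sum as an iterated sum over $w_0, w_1, \ldots, w_j$: the initial pair contributes $\sum_{w_0, w_1} R(w_0, w_1) = d_R(V) \leq bdn$; each subsequent $R$-step contributes $\sum_{w_{2\ell+1}} R(w_{2\ell}, w_{2\ell+1}) = d_R(w_{2\ell}) \leq \th d$; each $G$-step contributes $\sum_{w_{2\ell+2} \in N_G(w_{2\ell+1})} 1 = d_G(w_{2\ell+1}) \leq \th$. Multiplying gives a baseline $O(nd^{\cl{j/2}}\th^{j-1})$ for the total weight of walks with the degree constraints. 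Against this, any ``bad'' walk will lose at least one factor of either $\th\|M\|$ or $\|M\|$.

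For case (b), I would union bound over the position $i^* \in \{0, 1, \ldots, \cl{j/2}-1\}$ at which $\{w_{2i^*}, w_{2i^*+1}\} \in G$. At any $i^* \geq 1$ the constrained $R$-step obeys $\sum_{w_{2i^*+1} \in N_G(w_{2i^*})} R(w_{2i^*}, w_{2i^*+1}) \leq d_R(w_{2i^*}) d_G(w_{2i^*}) \|M\| \leq \th^2 d \|M\|$, a factor $\th\|M\|$ smaller than the baseline factor $\th d$. At $i^* = 0$, restricting the initial sum gives $\sum_{\{w_0, w_1\} \in G,\ w_0, w_1 \notin S_\th} R(w_0, w_1) \leq \|M\|\sum_{w_0 \notin S_\th} d_R(w_0) d_G(w_0) \leq \|M\|\,\th\,d_R(V) = O(\th\|M\|\cdot nd)$, again a factor $\th\|M\|$ smaller than the baseline $bdn$. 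Either subcase contributes $O(nd^{\cl{j/2}}\th^j\|M\|)$; summing over the $O(j)$ bad positions gives $o(nd^{\cl{j/2}})$ using the hypothesis $\th^j\|M\| = o(1)$.

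For case (a), I would union bound over pairs $i_1 < i_2$ with $w_{i_1} = w_{i_2}$. Pairs with $i_2 = i_1 + 1$ contribute zero ($R(v,v) = 0$ for $i_1$ even; no $G$-self-loops for $i_1$ odd), and pairs with $i_2 = i_1 + 2$ force a common edge to play both the $R$- and $G$-role, placing the walk already in case (b). For $i_2 \geq i_1 + 3$ the identification $w_{i_2} = w_{i_1}$ collapses one sum: if $i_2$ is odd the $R$-factor becomes $R(w_{i_2-1}, w_{i_1}) \leq d_R(w_{i_2-1})\|M\| \leq \th d\|M\|$; if $i_2$ is even, combining the $G$-indicator with the preceding $R$-step yields $\sum_{w_{i_2-1} \in N_G(w_{i_1})} R(w_{i_2-2}, w_{i_2-1}) \leq d_R(w_{i_2-2})d_G(w_{i_1})\|M\| \leq \th^2 d \|M\|$. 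Either way a factor $\|M\|$ is saved relative to the baseline, giving $O(nd^{\cl{j/2}}\th^{j-1}\|M\|)$ per pair. The $O(j^2)$-pair union bound then produces $o(nd^{\cl{j/2}})$ since $\th^{j-1}\|M\| \leq \th^j\|M\|/\th = o(1/\th) = o(1)$.

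The main obstacle is the $i^* = 0$ subcase of (b): the naive pointwise bound $d_R(u) \leq \th d$ applied at the initial step would only give a restricted-edge weight of $O(n\th^2 d\|M\|)$, a factor $\th$ too weak. The essential trick is to rewrite $R(u, N_G(u)) \leq d_R(u)\,d_G(u)\|M\|$ and sum via the \emph{total} mass $\sum_u d_R(u) \leq bdn$, thereby absorbing the $\th$ from $d_R(u) \leq \th d$ into the average and recovering the correct $O(\th\|M\|\cdot nd)$ bound on the restricted bad edge weight.
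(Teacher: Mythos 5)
Your proof is correct, and it is organized differently from the paper's. The paper does not do a position-by-position union bound on the weighted sum; instead it observes that any walk in $\CC_j^\th$ must have $w_{2i+1}\in \wh N_G(w_0,\dots,w_{2i})$ for some $i$, deduces $\pi(\CC_j^\th)=O(\th\|M\|)$ for \emph{any} random $(R,G)$-alternating walk $\pi$, and then converts back to weights via the pointwise comparison $R_\alt[W]\le \th^j d^{\cl{j/2}}\, n\,\pi_G(W)$ with $\pi_G$ the simple lazy walk started uniformly. That route yields $R_\alt[\CC_j^\th]=O(nd^{\cl{j/2}}\th^{j+1}\|M\|)$, i.e.\ it needs $\th^{j+1}\|M\|=o(1)$ --- and indeed the paper's restatement of the lemma in Section~\ref{sec:alternatingproofs} quietly strengthens the hypothesis to exactly that. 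Your direct telescoping sum, with the gain of $\th\|M\|$ (resp.\ $\|M\|$) harvested at the specific bad position, and in particular your treatment of the $i^*=0$ subcase via $\sum_{u}d_R(u)d_G(u)\le\th\, d_R(V)\le \th b dn$ rather than the pointwise bound $d_R(u)\le\th d$, saves a factor of $\th$ and proves the lemma under the hypothesis $\th^{j}\|M\|=o(1)$ exactly as stated. Both arguments are at bottom the same telescoping over the $\cl{j/2}$ $R$-factors and $\fl{j/2}$ $G$-factors, but yours is more elementary (no detour through a probability measure) and slightly sharper; since $\th$ is always chosen to grow arbitrarily slowly, the discrepancy is immaterial for the applications, but your version is the one that literally matches the lemma as stated. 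Your case analysis is also complete: adjacent repeats contribute zero (or are absorbed into case (b) when $i_2=i_1+2$), and the remaining repeats and non-strict steps each lose at least a factor $\|M\|$ against the baseline $O(nd^{\cl{j/2}}\th^{j-1})$.
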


\subsection{Matchings and augmenting walks}\label{sec:matchings}

Suppose $G$ is a graph on an even number $n$ of vertices. Let $s_1(G) \le \fl{n/2}$ denote the size of the largest matching in $G$. Suppose $s_1(G) < \fl{n/2}$, and let $\FF_1(G)$ be the family of matchings $F$ with $|F| = \m(G)$. For $F\in \FF_1(G)$ let $\IP_F$ be the set of vertices isolated by $F$, i.e. vertices $x$ such that $d_F(x) = 0$. Let $\IP_2 = \IP_2(G)$ be the set of vertex pairs $\{x, y\}$  such that $\{x, y\}\subseteq \IP_F$ for some $F\in \FF_1(G)$.

\begin{lemma}\label{lem:IPk}
Suppose $G\in \SE_1$ with $s_1(G) < \fl{n / 2}$. Then $|\IP_2| \ge (\b n)^2/ 2.$
\end{lemma}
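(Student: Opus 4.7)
The plan is to apply the classical Berge--Hungarian-tree argument twice, using the $\SE_1$ expansion at each step, and then count pairs. Fix any maximum matching $F_0$; since $s_1(G) < \fl{n/2}$ we have $|\IP_{F_0}| = n - 2s_1(G) \ge 2$, so we may pick distinct $x_1, x_2 \in \IP_{F_0}$.

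Build the alternating (Hungarian) tree rooted at $x_1$ with respect to $F_0$: even levels are extended by non-$F_0$ edges, odd levels by $F_0$-edges, each vertex entering at its first appearance. Let $A$ be the set of vertices at even levels. Because $F_0$ is maximum, Berge's theorem implies that every odd-level vertex is $F_0$-matched, necessarily to its parent in $A$, so the odd-level set injects into $A \setminus \{x_1\}$. Since $N_G(A)$ is contained in the odd-level set, $|N_G(A)| \le |A| - 1$. If $|A| < \b n$, the $\SE_1$ expansion would force $|N_G(A)| \ge |A|$, a contradiction, so $|A| \ge \b n$. Moreover every $a \in A \setminus \{x_1\}$ is reached by an alternating path ending in an $F_0$-edge and is therefore $F_0$-matched, so $A \cap \IP_{F_0} = \{x_1\}$ and in particular $x_2 \notin A$.

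For each $y_1 \in A$, let $P(y_1)$ be the tree path from $x_1$ to $y_1$ and set $F_1(y_1) = F_0 \triangle E(P(y_1))$. This is another maximum matching with $\IP_{F_1(y_1)} = (\IP_{F_0} \setminus \{x_1\}) \cup \{y_1\}$, so $x_2$ remains isolated. Rebuild the Hungarian tree in $F_1(y_1)$ rooted at $x_2$, and let $B(y_1)$ be its set of even-level vertices; by the same argument, $|B(y_1)| \ge \b n$. Every vertex of $B(y_1) \setminus \{x_2\}$ is the endpoint of a nontrivial even-length alternating path ending in an $F_1(y_1)$-edge and is therefore $F_1(y_1)$-matched; since $y_1$ is $F_1(y_1)$-unmatched and $y_1 \ne x_2$, we conclude $y_1 \notin B(y_1)$, and hence $y_2 \ne y_1$ for every $y_2 \in B(y_1)$.

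For each such $y_2$, the tree path $Q(y_2)$ from $x_2$ to $y_2$ lies entirely in the second Hungarian tree, which does not contain $y_1$, so $F_1(y_1) \triangle E(Q(y_2))$ is a maximum matching isolating both $y_1$ and $y_2$, giving $\{y_1, y_2\} \in \IP_2$. Counting ordered pairs yields $|\{(y_1, y_2) : y_1 \in A,\ y_2 \in B(y_1)\}| \ge |A| \cdot \b n \ge (\b n)^2$, and each unordered pair is counted at most twice, so $|\IP_2| \ge (\b n)^2 / 2$. The one place where care is needed, and the only real obstacle, is checking that $y_1$ sits outside the second Hungarian tree so that the second symmetric difference does not reattach $y_1$; this is exactly the content of $y_1 \notin B(y_1)$ together with the fact that odd-level vertices in that tree are $F_1(y_1)$-matched. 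Everything else is a direct application of Berge's theorem, the $\SE_1$ expansion, and double counting.
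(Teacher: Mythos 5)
Your overall strategy matches the paper's (show that the set of vertices that some maximum matching can leave unmatched is large, do this from both endpoints, and double count), but there is a genuine gap in the step where you deduce $|N_G(A)|\le |A|-1$ for the even-level set $A$ of the alternating tree. In a general, non-bipartite graph the odd-level set does not inject into $A\setminus\{x_1\}$: two odd-level vertices can be matched \emph{to each other} (a blossom), in which case neither of them contributes its mate to the next even level. Concretely, if $x_1$ is adjacent to $w_1,w_1',\dots,w_m,w_m'$ with $w_iw_i'\in F_0$ for each $i$, then $A=\{x_1\}$ while the odd level has size $2m$, so $|N_G(A)|=2m$ but $|A|-1=0$; the expansion hypothesis then yields nothing, and indeed $|A|=1$ here. (Separately, an odd-level vertex is not matched ``to its parent'' --- it enters the tree along a non-matching edge --- but that is only a misstatement; the blossom phenomenon is the real obstruction.) The same gap recurs in the claim $|B(y_1)|\ge \b n$ for the second tree, so both size bounds are unjustified as written.

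The paper sidesteps this by defining $Y$ via $F$-alternating \emph{walks} rather than tree paths: $Y$ is the set of endpoints of even-length $F$-alternating walks from $x$. For this walk-closed set one shows $N_G(Y)=N_F(Y)$, and $|N_F(Y)|\le|Y|-1$ because distinct vertices of $N_F(Y)$ are mates of distinct vertices of $Y\setminus\{x\}$. In the example above both $w_i$ and $w_i'$ lie in $Y$ (via the walks $(x_1,w_i',w_i)$ and $(x_1,w_i,w_i')$), so the blossom never shows up in $N_F(Y)$. To repair your argument you would need either to pass to alternating walks as the paper does, or to contract blossoms in the style of Edmonds before counting levels; the subsequent two-sided construction and the factor-of-two counting are fine once the size bounds on $A$ and $B(y_1)$ are restored.
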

\begin{proof}
Let $F\in \FF_1(G)$ with and $x\in \IP_F$. Let $Y$ be the set of vertices $y$ such that there exists an $F$-alternating walk $(x = w_0,\dots,w_{2j} = y)$ of even length, with $x\in Y$. Then $N_G(Y) = N_F(Y)$. Indeed, suppose $v\in N_G(Y)\setminus N_F(Y)$, and let $W = (x,\dots,y)$ be an even-length $F$-alternating walk such that $yv\in G$. If $v\in \IP_F$ then $W+v$ is an augmenting walk, contradicting the maximality of $F$. If $v\notin \IP_F$ then $vw\in F$ for some $w$, and the walk $(W, v, w)$ shows that $w\in Y$, contradicting $v\notin N_F(Y)$. It follows that $|N_G(Y)| \le |N_F(Y)| \le |Y| - 1$. Since $G$ expands, we then have $|Y| \ge \b n$.

The same argument shows that for every $y\in Y$, there is a set $|X_y| \ge \b n$ such that $\{x, y\}\in \IP_2$ for each $x\in X_y$. We conclude that $|\IP_2| \ge (\b n)^2/2$.
\end{proof}

For a graph $G$, integer $r \ge 1$, and $\th$ tending to infinity, let $\TT_r^\th(G)$ be the family of edge sets $|T| = r$ such that no $e\in T$ is contained in $G$ or incident to $S_\th$, and $s_1(G \cup T) > s_1(G)$.
\begin{proposition}\label{prop:augmentingwalks}
Suppose $R\in \RM$ and $G\in \SE_1$ with $s_1(G) < \fl{n/2}$. Then there exists an $r\le 2\mix(R) + 1$ such that if $\th$ tends to infinity sufficiently slowly, then
$$
R_G[\TT_r^\th] = \OM(nd^{r}).
$$
\end{proposition}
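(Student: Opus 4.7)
The plan is to build boosters from augmenting walks between the many pairs in $\IP_2$. Set $r = \mix(R) + 1$, which satisfies $r \le 2\mix(R) + 1$. By Lemma~\ref{lem:IPk}, $|\IP_2| \ge (\b n)^2/2$, and for each pair $\{x, y\} \in \IP_2$ fix a maximum matching $F_{xy} \subseteq G$ isolating both $x$ and $y$. A non-repeating $F_{xy}$-alternating walk $W = (w_0 = x, w_1, \dots, w_{2r-1} = y)$ with $\odd(W) \cap G = \emptyset$ and all vertices outside $S_\th$ yields a booster $T := \odd(W) \in \TT_r^\th$: the symmetric difference $F_{xy} \triangle E(W) \subseteq G \cup T$ is a matching of size $s_1(G) + 1$. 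Let $\WW_{xy}$ be the family of such walks and $\WW = \bigsqcup_{\{x,y\} \in \IP_2} \WW_{xy}$, a disjoint union since endpoints determine the pair. By \eqref{eq:Galt}, it suffices to prove $R_\alt[\WW] = \Omega(nd^r)$.

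For each pair, let $\pi_{xy}$ be the simple lazy $(R, F_{xy})$-alternating random walk from $x$. Since $|S_\th| = o(n)$ gives $\s(S_\th) = o(1)$ by Lemma~\ref{lem:sublinear}, a per-step union bound shows the walk avoids $S_\th$ for its first $2r-1$ steps with probability $1 - o(1)$. Lemma~\ref{lem:weightmix} applied with $j = r - 1 = \mix(R)$ to the family of $S_\th$-avoiding length-$2(r-1)$ walks produces a vertex set $U_{xy}$ of size at least $\r n$ on which $\pi_{xy}(\{w_{2(r-1)} = u\}) \ge \r/n$. Extending by one more even step from $u$ to $y$ contributes $M(u, y)$, so $\pi_{xy}(\WW_{xy}) \ge (\r/n) M(U_{xy}, y)$.

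It remains to show $\sum_{\{x,y\} \in \IP_2} M(U_{xy}, y) = \Omega(n^2)$, since then $\sum \pi_{xy}(\WW_{xy}) = \Omega(n)$ and \eqref{eq:pitoR} gives $R_\alt[\WW_{xy}] \ge d^r \pi_{xy}(\WW_{xy})$, yielding $R_\alt[\WW] = \Omega(nd^r)$. For each $x$ appearing in $\IP_2$, the set $Y_x^\IP := \{y : \{x, y\} \in \IP_2\}$ has size $\Omega(n)$ (by the construction in Lemma~\ref{lem:IPk}), so $\s(Y_x^\IP) = \Omega(1)$ by Lemma~\ref{lem:sublinear}, and the Expander Mixing Lemma gives $M(U_{xy}, Y_x^\IP) = |U_{xy}|\s(Y_x^\IP) + O(\lam(R) n) = \Omega(n)$ uniformly in the pair. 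A double-counting rearrangement exchanges the roles of $y$ and the aggregation set to convert the per-pair aggregate bound into the $\Omega(n^2)$ sum of the $M(U_{xy}, y)$ values. Finally, a variant of Lemma~\ref{lem:improper} bounds the $R_\alt$-contribution of walks that repeat a vertex or have an even-position edge already in $G$ by $o(nd^r)$, so these can be discarded, and \eqref{eq:Galt} delivers $R_G[\TT_r^\th] = \Omega(nd^r)$.

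The main obstacle is the Expander Mixing Lemma estimate. A pointwise bound $M(U_{xy}, y) = \Omega(1)$ is not always available, since the error term $O(\lam(R)\sqrt n)$ can dominate the main term $\Omega(1)$ when $\lam(R)$ decays only polylogarithmically. The resolution is the aggregated EML applied to $Y_x^\IP$, where the main term $\Omega(n)$ comfortably dwarfs the error $O(\lam(R) n) = o(n)$ afforded by $\lam(R) = o(1)$. Converting this aggregated inequality into the required $\Omega(n^2)$ sum -- in which $U_{xy}$ depends on the pair -- is the most delicate step, needing a careful rearrangement that leverages the uniform size bound $|U_{xy}| \ge \r n$ across pairs. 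A secondary subtlety is that the matching $F_{xy}$, and hence the walk and mixing set, varies with the pair, but because the per-pair estimates are uniform this does not impede the aggregation.
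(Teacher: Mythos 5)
There is a genuine gap at the heart of your argument: the step you yourself flag as ``the most delicate'' --- converting the aggregated bound $M(U_{xy}, Y_x^{\IP}) = \OM(n)$ into $\sum_{\{x,y\}} M(U_{xy}, y) = \OM(n^2)$ --- does not go through, and no rearrangement can make it go through. The aggregated Expander Mixing Lemma controls the row sum $\sum_{y'\in Y_x^{\IP}} M(U_{xy}, y')$ for a \emph{fixed} pair $\{x,y\}$, whereas you need to lower-bound the ``diagonal'' sum in which each term $M(U_{xy}, y)$ is evaluated against a different set $U_{xy}$ tailored to that pair. These are unrelated quantities: it is entirely consistent with all the hypotheses that $M(U_{xy}, y) = o(1)$ for every pair, since as you note the pointwise EML error $O(\lam\sqrt{n})$ dominates the main term $\Theta(1)$ whenever $\lam \gg n^{-1/2}$, and $\RM$ only guarantees $\lam = o(1)$. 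The obstruction is not an artifact of the bookkeeping. A walk of length $2\mix(R)$ mixes well enough to land in a \emph{linear-sized} set with the right probability, but cannot be steered to a \emph{specific} target vertex $y$ with probability $\OM(1/n)$ without a much stronger spectral gap. Nor can you relax the target to a larger set: the walk must be $F_{xy}$-alternating for the fixed matching $F_{xy}$, and an augmenting walk must terminate in $\IP_{F_{xy}}$, which can have size exactly $2$ when $s_1(G) = \fl{n/2}-1$; so the target really is essentially a single vertex.

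This is precisely why the proposition allows $r \le 2\mix(R)+1$ rather than $\mix(R)+1$, and why the paper's proof is two-sided: it runs lazy alternating walks from \emph{both} $x$ and $y$ for $2j$ steps each ($j = \mix(R)$), obtains sets $U_x, U_y$ of size $\OM(n)$ with $R_\alt[\AA^{\to u}] = \OM(d^j/n)$ on each, and glues the two walk families with a single $R$-edge via $\sum_{u,v} R_\alt[\AA_{x,y}^{\to u}]R(u,v)R_\alt[\AA_{y,x}^{\to v}] \ge R(U_x, U_y)\cdot\OM(d^j/n)^2$, where $R(U_x,U_y) = \OM(dn)$ by the EML applied to two linear-sized sets --- a bound that is robust to weak spectral gaps. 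The paper also handles separately the ``one-sided'' case where the walk from $x$ hits $\IP_F$ with constant probability before mixing (giving a shorter booster), which your construction would subsume but which must be split off because Lemma~\ref{lem:weightmix} requires the surviving mass to be $\OM(1)$. Your setup (Lemma~\ref{lem:IPk}, the lazy $(R,F)$-walk, Lemmas~\ref{lem:weightmix} and~\ref{lem:improper}, and \eqref{eq:Galt}) is the right toolkit, but the missing idea is the gluing of two walks, and with it the doubling of $r$.
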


\begin{proof}
Let $\AW_{2r-1}^\th(G)$ be the set of walks $(w_0,\dots,w_{2r-1})$ which (a) repeat no vertex, (b) avoid $S_\th$, (c) are $F$-alternating with $w_0, w_{2r-1}\in \IP_F$ for some $F\in \FF_1(G)$, and (d) are strictly $G$-alternating. Then $\odd(\AW_{2r-1}^\th(G)) \subseteq \TT_r^\th$, and \eqref{eq:Galt} shows that it is enough to prove that $R_\alt[\AW_{2r-1}^\th(G)] = \OM(nd^r)$ for some $r\le 2\mix(R) + 1$ and $\th$.

For $F\in \FF_1(G)$ let $\AA_i(F)$ be the set of $F$-alternating walks $(w_0, w_1,\dots,w_i)$ with $w_0\in \IP_F$ and $w_i \notin \IP_F$. Let $(w_0,\dots,w_i)\in \BB_i(F)$ if $(w_0,\dots,w_{i-1})\in \AA_{i-1}(F)$ and $w_i \in \IP_F$. Let $\AA_i, \BB_i$ be the walks which are in $\AA_i(F), \BB_i(F)$ for some $F\in \FF_1(G)$, respectively. Note that $\BB_i\setminus (\CC_i\cup \DD_i^\th) \subseteq \AW_{i}^\th(G)$.

Suppose $x\in \IP_F$ for some $F\in \FF_1(G)$. Let $\pi_{F, x}$ be the simple, lazy $(R, F)$-alternating random walk with starting vertex $x$, as defined in Section~\ref{sec:altmix}. Then for all $i\ge 0$, it holds that $\pi_{F, x}(\AA_{2i+1} \cup \BB_{2i + 1}) = \pi_{F, x}(\AA_{2i})$ and $\pi_{F, x}(\AA_{2i+2}) \ge \frac12\pi_{F, x}(\AA_{2i+1})$. For any $j\ge 1$, we conclude that there is a constant $c_j$ such that either $\pi_{F, x}(\BB_{2i-1}) \ge c_j$ for some $1\le i < j$, or $\pi_{F, x}(\AA_{2j}) \ge c_j$. We set $j = \mix(R)$, as defined in \eqref{eq:mixdef}.

For each pair $\{x, y\}\in \IP_2$ pick some $F(x, y)\in \FF_1(G)$ with $\{x, y\}\in \IP_F$. Define a random $(R, G)$-alternating walk $\pi$ by
$$
\pi = \sum_{\{x, y\}\in \IP_2} \frac{1}{|\IP_2|}\left(\frac{\pi_{F(x, y), x}}{2} + \frac{\pi_{F(x, y), y}}{2}\right).
$$
In other words, we pick a pair $\{x, y\} \in \IP_2$ uniformly at random, then pick one of $x, y$ as our starting point with probability $1/2$, and run the simple, lazy $(R, F(x, y))$-alternating random walk. Note that $\pi$ is near-uniform, as $|\IP_2| = \OM(n^2)$.

{\bf The one-sided case.} Suppose $\pi(\BB_{2i-1}) = \OM(1)$ for some $i < j$. Since $\BB_{2i-1} \setminus (\CC_{2i-1}\cup \DD_{2i-1}^\th) \subseteq \AW_{2i-1}^\th$, Lemmas~\ref{lem:hitlargeset}~(\ref{item:nohit}) and \ref{lem:improper} imply
\al{
  \pi(\AW_{2i-1}^\th(G)) & \ge \pi(\BB_{2i-1}) - \pi(\CC_{2i-1}) - \pi(\DD_{2i-1}^\th) = \OM(1).
}
Then \eqref{eq:mixpitoR} gives $R_\alt[\AW_{2i-1}^\th(G)] = \OM(nd^i)$. 

{\bf The two-sided case.} Suppose $\pi(\BB_{2i-1}) < c_j$ for all $i < j$. Let $\AA_{2j}^\th$ be the set of walks in $\AA_{2j}$ which avoid $\DD_{2j}^\th$, and let
$$
\XY = \left\{\{x, y\}\in \IP_2 : \pi_{F(x, y), x}(\AA_{2j}^\th) \ge \frac{c_j}{2}\AND \pi_{F(x, y), y}(\AA_{2j}^\th) \ge \frac{c_j}{2}\right\}.
$$
Then $|\XY| = \OM(n^2)$. Indeed, $\pi(\DD_{2j}^\th) = o(1)$ by Lemma~\ref{lem:hitlargeset}~(\ref{item:nohit}), so
\al{
  c_j-o(1) \le \pi(\AA_{2j}^\th) \le \frac{|\ol \XY|}{|\IP_2|}\frac{c_j}{2} + \frac{|\XY|}{|\IP_2|} \le \frac{c_j}{2} + \frac{|\XY|}{|\IP_2|}.
}

Fix some $\{x, y\}\in \XY$ and let $F = F(x, y)$. Let $\AA_{x, y} \subseteq \AA_{2j}^\th$ be the set of $F$-alternating walks in $\AA_{2j}^\th$ originating at $x$. Note that $\AA_{x, y}\circ \AA_{y, x}\subseteq \BB_{4j+1}\setminus \DD_{4j+1}^\th$. We have
\al{
  R_\alt[\AA_{x, y} \circ \AA_{y, x}] & = \sum_{u, v} R_\alt[\AA_{x, y}^{\to u}] R(u, v) R_\alt[\AA_{y, x}^{\to v}].\label{eq:glue}
}
By Lemma~\ref{lem:weightmix} there exists a constant $\r > 0$ and a set $|U_x| \ge \r n$ such that $\pi_{F, x}(\AA_{x, y}^{\to u}) \ge \r/n$ for all $u\in U_x$,
and by \eqref{eq:pitoR} we have
$$
R_\alt[\AA_{x, y}^{\to u}] \ge d^j \pi_{F, x}(\AA_{x, y}^{\to u}) = \OM\bfrac{d^j}{n}.
$$
Likewise, $R_\alt[\AA_{y, x}^{\to v}] = \OM(d^j/n)$ for all $v\in U_y$ where $|U_y| \ge \r n$. Then \eqref{eq:glue} and Lemma~\ref{lem:EML} imply
\al{
  R_\alt[\AA_{x, y}\circ \AA_{y, x}] \ge R(U_x, U_y) \OM\bfrac{d^j}{n}^2 = \OM\bfrac{d^{2j+1}}{n}. \label{eq:glue2}
}
Since $\AA_{x, y}\circ \AA_{y, x}\setminus \CC_{4j+1} \subseteq \AW_{4j+1}^\th(G)$, Lemma~\ref{lem:improper} gives
\al{
  R_G[\AW_{4j+1}^\th(G)] & \ge \left(\sum_{(x, y)\in \XY} R_\alt[\AA_{x, y}\circ \AA_{y, x}] \right) - R_\alt[\CC_{4j+1}] = \OM(nd^{2j+1}).
}
\end{proof}

\subsection{Paths and boosters}\label{sec:posa}

We view paths $P = (x = v_0,\dots,v_\ell = y)$ as being directed from $x$ to $y$, and for any $P$ define a total ordering $\le_P$ of $V$ by $v_i\le_P v_j$ whenever $i\le j$, and $u\le_P v$ whenever $u\in P, v\notin P$ (arbitrarily ordering the vertices not on $P$).

Suppose ${\bf z} = (z_1, z_2,\dots,z_\ell)$ is a sequence of distinct vertices on $P$. Define $\t({\bf z})$ as the permutation of $[\ell]$ for which $z_{\t(1)} \le_P z_{\t(2)} \le_P \dots \le_P z_{\t(\ell)}$. If ${\bf z}$ repeats a vertex or contains a vertex not on $P$, define $\t({\bf  z}) = \perp$. For a pair of $P$-alternating walks $(X, Y)$ with $X = (x,x_1,\dots,x_i)$ and $Y = (y,y_1,\dots,y_{2j})$, let $\t(X, Y) = \perp$ if $X$ has odd length and $f(X) \in N_P(Y)$, and $\t(X, Y) = \t(y_1,\dots,y_{2j}, x_1,\dots,x_i)$ otherwise (note that $\t = \perp$ is possible in this case as well). See Figure~\ref{fig:tau}.









\begin{figure}[b]
\begin{center}
\begin{tikzpicture}
\node (x) at (0, 0) {};
\node (y) at (10, 0) {};

\node[label node] at (0, -.3) {$x$};
\node[label node] at (10, -.3) {$y$};

\node[fill=black] (y1) at (3, 0) {};
\node[label node] at (3, -.3) {$y_1$};
\node[fill=black] (y2) at (3.5, 0) {};
\node[label node] at (3.5, -.3) {$y_2$};
\node[fill=black] (y3) at (8, 0) {};
\node[label node] at (8, -.3) {$y_3$};
\node[fill=black] (y4) at (7.5, 0) {};
\node[label node] at (7.5, -.3) {$y_4$};

\node[fill=black] (x1) at (5, 0) {};
\node[label node] at (5, -.3) {$x_1$};
\node[fill=black] (x2) at (4.5, 0){};
\node[label node] at (4.5, -.3) {$x_2$};

\node[label node] at (3, -.7) {$[1]$};
\node[label node] at (3.5, -.7) {$[2]$};
\node[label node] at (8, -.7) {$[3]$};
\node[label node] at (7.5, -.7) {$[4]$};
\node[label node] at (5, -.7) {$[5]$};
\node[label node] at (4.5, -.7) {$[6]$};

\draw (x) -- (y);

\draw[bend right,line width=1pt] (y) to (y1);
\draw[line width=1pt] (y1) to (y2);
\draw[bend left,line width=1pt] (y2) to (y3);
\draw[line width=1pt] (y3) to (y4);

\draw[bend left, line width=1pt] (x) to (x1);
\draw[line width=1pt] (x1) to (x2);
\end{tikzpicture}
\end{center}
\caption{Two walks $X, Y$ with $\t(X, Y) = (126543)$. If $X' = (X, x_3)$, then $\t(X', Y)$ will take the following values in order as $x_3$ increases from $x$ to $y$ and out of $P$: $\perp$, $(7126543)$, $\perp$, $(1276543)$, $\perp$, $(1265743)$, $\perp$, $(1265437)$, $\perp$. This sequence is the same for any compatible $X, Y$ with the same $\t(X, Y)$, though some values may be skipped.}
\label{fig:tau}
\end{figure}
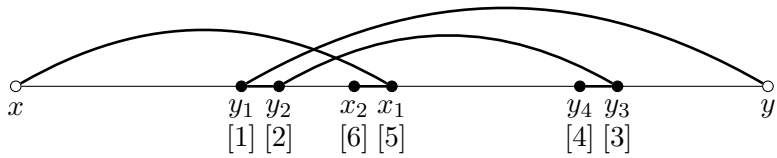

\subsubsection{Path rotations}\label{sec:rotations}

Suppose $P = (v_0,\dots,v_\ell)$ is a path of length $\ell$, and let $e = \{v_\ell, v_i\}$ be an edge with $0 < i < \ell-1$. Then $P\triangle (v_\ell, v_i, v_{i+1}) = (v_0,\dots,v_i,v_\ell,v_{\ell-1},\dots,v_{i+1})$ is also a path of length $\ell$. We say that $(v_\ell, v_i, v_{i+1})$ is a rotation walk of length 2.

In general, suppose $P$ is a path with endpoints $x$ and $y$, and suppose $W$ is a non-repeating even-length strictly $P$-alternating walk starting at $y$ and avoiding $x$. If $P\triangle W$ is a path, we say that $W$ is a {\em rotation walk} for $P$. Let $\RR_{2i}(P, y)$ be the set of rotations walks for $P$ of length $2i$ with starting point $y$. Let $\RR_{2i+1}(P, y)$ be the set of walks $(w_0,\dots,w_{2i+1})$ with $(w_0,\dots,w_{2i})\in \RR_{2i}(P, y)$ and $w_{2i+1}\in P$ with $\dist_P(w_{2i+1}, \{w_0,\dots,w_{2i}, x\}) > 1$.

Suppose $W = (w_0,\dots,w_{2i+1})\in \RR_{2i+1}(P, y)$ for some $i \ge 0$. Then there is a unique vertex $v \in N_P(f(W))$ for which $(W, v) \in \RR_{2i+2}(P, y)$, and we define $r_P(W)$ as the path $(W, v)$. Note that $\t(r_P(W))$ is fully determined by $\t(W)$, as $v$ is the immediate successor of $u$ along the path $P\triangle W$, viewed as going from $x$ to $f(W)$.

We will need to consider pairs of rotation walks starting at $x$ and $y$. Let $\AA_i(P, x)$ be the set of $P$-alternating walks of length $i$ starting at $x$. For $X\in \AA_i(P, x)$ and $Y\in \RR_{2j}(P, y)$, say that $(X, Y)$ is a {\em compatible pair} if $X\in \RR_i(P\triangle Y, x)$, no vertex appears twice in $X\cup Y$, and $\dist_P(f(X), Y) > 1$ if $X$ has odd length. For a compatible pair $(X, Y)$ let $r_{P, Y}(X) = r_{P\triangle Y}(X)$ be the unique walk $(X, v)$ for which $(r_P(X), Y)$ is compatible. Note that $\t(r_{P, Y}(X), Y)$ is fully determined by $\t(X, Y)$.

We summarize this with a lemma. For families of walks $\XX$ and $\YY$, say that $(\XX, \YY)$ is compatible if every $(X, Y)\in \XX\times \YY$ is compatible, and there exists a permutation $\t_0$ such that $\t(X, Y) = \t_0$ for all $(X, Y)\in \XX\times \YY$. 
\begin{lemma}\label{lem:orient}
Let $i, j \ge 0$. Suppose $\XX\subseteq \AA_{2i+1}(P, x)$ and $\YY \subseteq \RR_{2j}(P, y)$ are such that $(\XX, \YY)$ is compatible. Let $r_{P, \YY}(\XX) = \{r_{P, Y}(X) : X\in \XX, Y\in \YY\} \subseteq \AA_{2i+2}(P, x)$. Then $(r_{P, \YY}(\XX), \YY)$ is compatible.
\end{lemma}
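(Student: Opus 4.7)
The plan is to use the observation preceding the lemma---that $\tau(r_{P, Y}(X), Y)$ depends only on $\tau(X, Y)$---together with the distance conditions built into the definition of $\RR_{2i+1}$ to show that for each fixed $X \in \XX$ the extended walk $r_{P, Y}(X)$ is the same for every $Y \in \YY$. Once this independence is in hand, the lemma follows routinely.

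Fix $X = (x, x_1, \ldots, x_{2i+1})$ and write $r_{P, Y}(X) = (X, v_Y)$. Because $X$ has odd length and $(X, Y)$ is compatible, $\dist_P(x_{2i+1}, Y) > 1$, so $P \triangle Y$ agrees with $P$ on the edges incident to $x_{2i+1}$; consequently $v_Y$ is one of the two $P$-neighbors of $x_{2i+1}$. The definition of $\RR_{2i+1}$ additionally gives $\dist_P(x_{2i+1}, \{x, x_1, \ldots, x_{2i}, y\}) > 1$, so every vertex of $(X \cup Y \cup \{x, y\}) \setminus \{x_{2i+1}\}$ lies at $P$-distance at least $2$ from $x_{2i+1}$, while $v_Y$ sits at $P$-distance exactly $1$. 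In particular $v_Y \notin X \cup Y$, and in the $P$-sorted sequence $(y_1, \ldots, y_{2j}, x_1, \ldots, x_{2i+1}, v_Y)$ the new vertex $v_Y$ lands immediately adjacent to $x_{2i+1}$. The cited observation says $\tau((X, v_Y), Y)$ is a function of $\tau_0 := \tau(X, Y)$, so the side (predecessor versus successor on $P$) on which $v_Y$ sits is determined by $\tau_0$ alone. Combined with the fact that $x_{2i+1}$ has only two $P$-neighbors, this pins down $v_Y$ as a function of $X$ and the globally fixed $\tau_0$, independent of $Y$.

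Writing $v$ for this common extension and $X' := (X, v)$, I would finish as follows. For any $Y \in \YY$: $X' = r_{P, Y}(X) \in \RR_{2i+2}(P \triangle Y, x)$ by the definition of $r_{P, Y}$; the set $X \cup Y$ is repetition-free by compatibility of $(X, Y)$, and $v \notin X \cup Y$ was established above; the odd-length clause of ``compatible pair'' is vacuous since $X'$ has length $2i+2$. So each $(X', Y) \in r_{P, \YY}(\XX) \times \YY$ is a compatible pair, and applying the observation once more yields $\tau(X', Y) = \tau_0'$ for a common $\tau_0'$ depending only on $\tau_0$. The main obstacle is the $Y$-independence of $v_Y$---that is, pinning down which of the two $P$-neighbors of $x_{2i+1}$ gets selected; the remainder is a matter of unfolding definitions.
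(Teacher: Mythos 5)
Your proof is correct and follows the same route as the paper, whose entire argument is the one-line appeal to the fact that $\t(r_{P,Y}(X),Y)$ is determined by $\t(X,Y)$, which is constant on $\XX\times\YY$. Your additional verification that the extension vertex $v_Y$ is independent of $Y$ (so that every pair in $r_{P,\YY}(\XX)\times\YY$, not just the diagonal ones, is covered) is a detail the paper leaves implicit, and it is argued correctly.
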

\begin{proof}
This follows from the fact that $\t(r_{P, Y}(X), Y)$ is fully determined by $\t(X, Y)$, which is constant on $\XX\times \YY$.
\end{proof}
The relevance of compatible walks is this: if $(\XX, \YY)$ is a compatible pair of families of even-length walks, then $P\triangle (X\circ Y)$ is a cycle of length $\ell + 1$ for every $(X, Y)\in \XX\times \YY$.

\subsubsection{Boosters}

Suppose $G\in \SE_2$ has $s_2(G) < n$, and let $\FF_2(G)$ be the set of paths and cycles $F\subseteq G$ with $|F| = s_2(G)$. We first take care of a special case.
\begin{lemma}\label{lem:cycles}
Suppose $G\in \SE_2$ and that $\FF_2(G)$ contains a cycle. Let $\BW_1(G)$ denote the set of edges $e$ such that $s_2(G + e) > s_2(G)$. Then $R_G[\BW_1(G)] = \OM(dn)$.
\end{lemma}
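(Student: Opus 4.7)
The plan is to break the cycle $C$ into a path of length $\ell = s_2(G)$, apply a P\'osa-style double rotation to produce a linear-sized rectangle $X^* \times Y^*$ of endpoint pairs joined by length-$\ell$ paths in $G$, and then invoke the expander mixing lemma to lower-bound the $R$-weight of the resulting boosters.

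To begin, I would pick $v_0 \in V \setminus V(C)$ with a neighbor $u_0 \in V(C)$; such a pair exists because $V \setminus V(C)$ is nonempty (since $\ell < n$) and the $\SE_2$ expansion precludes $V(C)$ from being a union of components of $G$. Breaking $C$ at $u_0$ and prepending $v_0$ yields a path $P_0$ of length $\ell$ on vertex set $V(C) \cup \{v_0\}$. Crucially, since $s_2(G) = \ell$, no endpoint reachable by rotations of $P_0$ has a $G$-neighbor outside $V(P_0)$---otherwise the path would extend to length $\ell+1$, contradicting the maximality of $s_2$. Using the expansion of $G \in \SE_2$ to drive linear growth and iteratively applying Lemma~\ref{lem:orient} to alternately extend the two sides, I would grow compatible families of rotation walks $(\XX, \YY)$ with $|\XX|, |\YY| = \OM(n)$. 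The final-vertex sets $X^* = \{f(X) : X \in \XX\}$ and $Y^* = \{f(Y) : Y \in \YY\}$ then satisfy $|X^*|, |Y^*| = \OM(n)$, and for every $(x, y) \in X^* \times Y^*$ there is a path of length $\ell$ from $x$ to $y$ in $G$.

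For each such pair $(x, y)$ with $\{x, y\} \notin G$, adding $\{x, y\}$ closes the length-$\ell$ path into a cycle of length $\ell + 1 > s_2(G)$, so $\{x, y\} \in \BW_1(G)$. To avoid contamination from heavy vertices, I would restrict to $X^{**} = X^* \setminus \wh N_G(S_\theta)$ and $Y^{**} = Y^* \setminus \wh N_G(S_\theta)$ for a $\theta$ tending to infinity slowly; the $\SE_2$ light-tail condition gives $|\wh N_G(S_\theta)| = o(n)$, so $|X^{**}|, |Y^{**}| = \OM(n)$, and every vertex of $X^{**} \cup Y^{**}$ has $G$-degree at most $\theta$. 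Then
\[
R_G[\BW_1(G)] \ge R(X^{**}, Y^{**}) - \sum_{\substack{\{u,v\} \in G \\ u \in X^{**},\, v \in Y^{**}}} R(u, v).
\]
By Lemma~\ref{lem:EML}(i), the first term is $\OM(dn)$, while the subtracted sum contains at most $\theta |X^{**}| = O(\theta n)$ ordered pairs, each of $R$-weight at most $\|R\| \le dn^{-\g}$, so it is $O(\theta dn^{1-\g}) = o(dn)$ by Definition~\ref{def:degrees}(c). This yields $R_G[\BW_1(G)] = \OM(dn)$.

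The main obstacle is constructing the linear-sized rectangle $X^* \times Y^*$ via double rotation. A one-sided P\'osa rotation delivers $|Y^*| = \OM(n)$ easily, but summing $R$-weights over $\{v_0\} \times Y^*$ yields only $O(d_R(v_0)) = O(d)$, short by a factor of $n$. Simultaneously enlarging the rotation at both ends while maintaining the compatibility that guarantees every pair $(x, y) \in X^* \times Y^*$ really is joined by a length-$\ell$ path is the technical heart of the argument, and is exactly what the compatible-pairs machinery of Section~\ref{sec:rotations} is designed to handle.
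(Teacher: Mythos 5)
Your opening move --- finding $v_0\notin V(C)$ with a neighbour on $C$ --- is exactly where the argument breaks. You justify its existence by claiming that the $\SE_2$ expansion ``precludes $V(C)$ from being a union of components,'' but the expansion condition only constrains sets of size less than $\b n$; it says nothing about linear-sized sets, and a graph in $\SE_2$ may perfectly well be disconnected, with several components each of size at least $\b n$. In fact the situation is the opposite of what you assume: the paper's proof begins by observing that if a cycle $C$ lies in $\FF_2(G)$, then its vertex set $U$ must form a connected component of $G$ --- an edge $uv$ leaving $U$, combined with the minimum degree $2$ that $\SE_2$ forces on the outside vertex $v$, would let one extend $C$ (broken open at $u$) into a path with more than $|U|=s_2(G)$ edges. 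So in the only case the lemma actually has to address, the vertex $v_0$ you need does not exist and the rotation construction never gets started. The paper's proof then exploits this degeneracy directly and is much shorter than what you propose: expansion forces $\b n\le|U|\le(1-\b)n$; every pair in $U\times\ol U$ is a non-edge whose addition creates a path of length $|U|+1$ (go from a second neighbour $w\in\ol U$ of $v$, to $v$, to $u$, then around $C$); hence $U\times\ol U\subseteq\BW_1(G)$ and Lemma~\ref{lem:EML}(i) gives $R_G[\BW_1(G)]\ge R(U,\ol U)=\OM(dn)$. No rotations are needed.

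A secondary gap: even granting a path $P_0$, the object you want --- a linear-by-linear rectangle $X^*\times Y^*$ such that \emph{every} pair is joined by a length-$\ell$ path \emph{in $G$} --- is not what the cited machinery produces. The rotation walks of Section~\ref{sec:posa} take chord steps weighted by $M$ that need not be edges of $G$, so the boosters obtained from a compatible pair $(\XX_{2j},\YY_{2j})$ are edge sets of size $2j+1$, not single edges; whereas classical P\'osa rotation (chords restricted to $G$) yields, for each endpoint $x$, an $x$-dependent set $Y_x$ of partners rather than a product set, which is precisely why Lemma~\ref{lem:EML} cannot be applied to it and why Proposition~\ref{prop:boosterwalk} settles for multi-edge boosters. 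Your final weight computation therefore rests on a structure that neither tool supplies.
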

\begin{proof}
The family $\FF_2(G)$ contains a cycle $C$ only if the vertex set $U$ of $C$ forms a connected component in $G$. Since $G$ expands, any connected component has size between $\b n$ and $(1-\b)n$. Since $U\times \ol U \subseteq \BW_1(G)$, by Lemma~\ref{lem:EML} we have $R_G[\BW_1(G)] \ge R(U, \ol U) = \OM(dn)$.
\end{proof}
Now suppose $\FF_2(G)$ contains only paths. For $U\subseteq V$, let $\EP_2(U)$ be the set of ordered pairs $(x, y)$ such that some $P\in \FF_2(G)$ has endpoints $x$ and $y$, and $V(P) = U$.
\begin{lemma}\label{lem:EPk}
Suppose $G\in \SE_2$ with $s_2(G) < n$. Then there exists some $U\subseteq V$ with $|\EP_2(U)| \ge (\b n)^2$.
\end{lemma}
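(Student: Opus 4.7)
The approach mirrors Lemma~\ref{lem:IPk}, replacing augmenting walks for matchings with Pósa's rotation technique for paths. Since by assumption $\FF_2(G)$ contains only paths (the cycle case is handled in Lemma~\ref{lem:cycles}), I would pick any $P\in \FF_2(G)$ with endpoints $x,y$, and set $U = V(P)$, $\ell = s_2(G) = |U|-1$.

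The first step is to show that holding $x$ fixed yields many possible ``other'' endpoints, i.e., that the set $Y = \{y' \in V : (x, y') \in \EP_2(U)\}$ satisfies $|Y| \ge \b n$. Trivially $y \in Y$. For any $y' \in Y$ and its associated length-$\ell$ path $P_{y'} = (x = u_0, u_1, \dots, u_\ell = y')$, maximality of $\ell$ forces $N_G(y') \subseteq U$, else $P_{y'}$ extends by an outside neighbor to a path of length $\ell+1$. For any $v = u_i \in N_G(y')$ with $1 \le i \le \ell - 2$, rotating $P_{y'}$ at the chord $\{y',v\}$ produces a length-$\ell$ path from $x$ to $u_{i+1}$ with the same vertex set $U$, so $u_{i+1} \in Y$. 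A standard Pósa counting argument using this successor map then gives $|N_G(Y) \setminus Y| \le 2|Y| - 1$, the $-1$ reflecting that $x$ can never be attained as a rotation endpoint; combined with the $\SE_2$-expansion $|N_G(Y)| \ge 2|Y|$ valid for $|Y| \le \b n$, this forces $|Y| \ge \b n$.

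Next, for each $y' \in Y$ I would fix some length-$\ell$ path $P_{y'}$ from $x$ to $y'$ with $V(P_{y'}) = U$ and apply exactly the same argument to $P_{y'}$ with the endpoint $y'$ now held fixed. Since $P_{y'}$ is itself a longest path in $G$, this yields a set $X_{y'} = \{x' \in V : (x', y') \in \EP_2(U)\}$ with $|X_{y'}| \ge \b n$. Every ordered pair $(x', y')$ with $y' \in Y$ and $x' \in X_{y'}$ lies in $\EP_2(U)$, and since such pairs are pairwise distinct, summing over $y'$ gives
\[
|\EP_2(U)| \ge \sum_{y' \in Y} |X_{y'}| \ge (\b n)(\b n) = (\b n)^2.
\]

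The main obstacle is making the Pósa counting bound $|N_G(Y) \setminus Y| \le 2|Y| - 1$ rigorous under the merely weak $\SE_2$-expansion $|N_G(A)| \ge 2|A|$ rather than the strict version $|N_G(A)| \ge 2|A|+1$ that some textbook Pósa statements presume. One genuinely needs to extract a constant saving from the rotation structure, either by carefully tracking that $x$ is excluded as a rotation endpoint (and that each element of $Y$ has bounded fiber size under the successor map, so the count goes through), or more cheaply by applying the $\SE_2$ guarantee at a slightly smaller threshold $\b' < \b$ so that weak $2|A|$-expansion on $|A| \le \b n$ translates into an effective strict expansion on $|A| \le \b' n$. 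Either route yields the conclusion with $\b$ replaced by a positive constant, which is all that is required.
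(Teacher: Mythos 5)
Your proposal is correct and follows essentially the same route as the paper: Pósa's rotation lemma applied to a longest path gives $|N_G(Y)| < 2|Y|$ for the set $Y$ of rotation endpoints with $x$ fixed, which contradicts the $\SE_2$ expansion $|N_G(A)| \ge 2|A|$ for $|A| < \b n$ unless $|Y| \ge \b n$, and repeating with each $y'\in Y$ fixed and counting ordered pairs gives $(\b n)^2$. Your closing worry is a non-issue: since the paper's $N_G(A)$ already excludes $A$ and Pósa's conclusion is the strict inequality $|N_G(Y)| \le 2|Y|-1$, it directly contradicts $|N_G(Y)|\ge 2|Y|$ with no need for strengthened expansion or a smaller threshold.
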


\begin{proof}
P\'osa's lemma (see e.g.~\cite{FriezeKaronski}) shows that if $\{x, y\}\in \EP_2(U)$ for some $x, y$, then the set $Y = \{y : \{x, y\}\in \EP_2(U)\}$ has $|N_G(Y)| < 2|Y|$. Since $G\in \SE_2$ we then have $|Y| \ge \b n$, and the lemma follows just like in Lemma~\ref{lem:IPk}, this time considering ordered pairs.
\end{proof}

We prove the analogue of Proposition~\ref{prop:augmentingwalks} for paths. For a graph $G$, integer $r \ge 1$, and $\th$ tending to infinity, let $\TT_r^\th(G)$ be the family of edge sets $|T| = r$ such that no $e\in T$ is contained in $G$ or incident to $S_\th$, and $s_2(G \cup T) > s_2(G)$.

\begin{proposition}\label{prop:boosterwalk}
Suppose $R\in \RM$ and $G\in \SE_2$ with $s_2(G) < n$. Then there exists an $r\le 2\mix(R) + 1$ such that if $\th$ tends to infinity sufficiently slowly, then
$$
R_G[\TT_r^\th] = \OM(nd^{r}).
$$
\end{proposition}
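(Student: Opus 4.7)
My plan is to follow the proof of Proposition~\ref{prop:augmentingwalks} line by line, replacing matching-augmenting walks by P\'osa rotation walks. If $\FF_2(G)$ contains a cycle, Lemma~\ref{lem:cycles} gives $r=1$ directly (after subtracting the $o(dn)$ weight incident to $S_\th$). Otherwise $\FF_2(G)$ consists of paths only, and Lemma~\ref{lem:EPk} produces $U\subseteq V$ with $|\EP_2(U)|\ge (\b n)^2$; for each $(x,y)\in\EP_2(U)$ I would fix a maximum path $P(x,y)\in\FF_2(G)$ spanning $U$ and run the simple lazy $(R,P)$-alternating random walk $\pi_{P,z}$ for $z\in\{x,y\}$. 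As analogues of $\AA_i,\BB_i$ from Section~\ref{sec:matchings}, I would let $\AA_i(P,z)$ be the $P$-alternating walks of length $i$ starting at $z$, and $\BB_{2i+1}(P,z)$ the walks of length $2i+1$ with $\AA_{2i}$-prefix whose last step is a ``closing'' edge $\{w_{2i},v\}$---either $v$ is the opposite endpoint of $P$ (so that $P\triangle W$ plus this edge is a cycle of length $|U|>s_2(G)$) or $v\in\ol U$ (so that $P\triangle W$ plus this edge is an extended path of length $\ell+1$). Because $P$ is a maximum 2-graph, every such closing edge is automatically absent from $G$, and after pruning walks that hit $S_\th$, repeat a vertex, or fail to be strictly $G$-alternating (via Lemmas~\ref{lem:improper} and~\ref{lem:hitlargeset}(i)) we obtain $\odd(W)\in\TT_{i+1}^\th$.

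\paragraph{The two cases.} With $j=\mix(R)$, the counting argument of Proposition~\ref{prop:augmentingwalks} yields the same dichotomy for the mixture $\pi$ over endpoint pairs: either (one-sided) $\pi(\BB_{2i-1})=\OM(1)$ for some $i\le j$, or (two-sided) both $\pi_{P,x}(\RR_{2j})$ and $\pi_{P,y}(\RR_{2j})$ are $\OM(1)$ on $\OM(n^2)$ endpoint pairs. In the one-sided case, \eqref{eq:mixpitoR} combined with \eqref{eq:Galt} gives $R_G[\TT_{i+1}^\th]=\OM(nd^{i+1})$ immediately. In the two-sided case, Lemma~\ref{lem:weightmix} yields sets $U_x,U_y$ of size $\OM(n)$ with $\pi_{P,z}(\RR_{2j}^{\to u})\ge\r/n$, and I would combine walks via a bridging edge $uv\in U_x\times U_y$. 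To ensure that $P(x,y)\triangle(X\circ Y)$ is truly a cycle of length $|U|$ and not merely a $P$-alternating closed walk, I would apply Lemma~\ref{lem:observation} with $\t$ from Section~\ref{sec:posa} to extract $\t$-constant subfamilies (losing only a constant factor in $\pi$-mass) and then invoke Lemma~\ref{lem:orient} to push $\XX\subseteq\AA_{2j-1}$ to $r_{P,\YY}(\XX)\subseteq\AA_{2j}$, at which point compatibility is established. Then \eqref{eq:glue}--\eqref{eq:glue2} with $R(U_x,U_y)=\OM(dn)$ from Lemma~\ref{lem:EML}(i) gives $R_G[\TT_r^\th]=\OM(nd^{2j+1})$ for $r=2j+1$.

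\paragraph{Main obstacle.} The one genuinely new difficulty is the compatibility argument. Unlike matching-augmenting walks, which concatenate without constraint at $\IP_F$ endpoints, P\'osa rotation walks from $x$ and $y$ produce a valid cycle under symmetric difference only when the permutation $\t(X,Y)$ of Section~\ref{sec:posa} is constant across the combined pair; otherwise $P\triangle(X\circ Y)$ can fail to be a single cycle. Applying Lemma~\ref{lem:observation} so that the extracted $\t$-constant subfamilies still contribute $\OM(1/n^2)$ of $\pi$-mass per endpoint pair---and verifying that the subsequent rotation push $r_{P,\YY}$ preserves both compatibility and the $R_\alt$-weight bounds needed for \eqref{eq:glue}---is the key technical step. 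After that, the expander-mixing sum, the mixing lemma, and the pruning of degenerate walks all transfer from the matching proof with only minor bookkeeping changes in the exponents of $d$.
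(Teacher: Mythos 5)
Your overall architecture matches the paper's: the cycle case via Lemma~\ref{lem:cycles}, rotation walks in place of augmenting walks, the one-sided/two-sided dichotomy, mixing via Lemma~\ref{lem:weightmix}, and gluing via Lemma~\ref{lem:EML} all transfer essentially as you describe, and you have correctly isolated the genuinely new difficulty (compatibility of the two rotation families). However, your treatment of that difficulty, as written, would not work. You propose to run both walks out to length $2j$ and then apply Lemma~\ref{lem:observation} \emph{once} at the end to extract a $\t$-constant pair. The obstruction is that the walk $X$ from $x$ is not a free $(R,P)$-alternating walk: its even (rotation) steps are rotations of $P\triangle Y$, so they depend on $Y$ --- the map $r_{P,Y}(X)$ is well-defined and independent of the choice of $Y\in\YY$ only when $\t(\cdot,\cdot)$ is already constant on $\XX\times\YY$. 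Hence the construction must be interleaved: at each odd step one extends $\XX_{i-1}$ by an $M$-step, applies Lemma~\ref{lem:observation} to restore $\t$-constancy (shrinking both $\XX$ and $\YY$), and only then performs the rotation step via Lemma~\ref{lem:orient}. A single application at the end has nothing well-defined to act on.

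Two further points you would need to supply. First, Lemma~\ref{lem:observation} has a hypothesis --- that $\XX_i'$ admits a total order under which $X\mapsto\t(X,Y)$ respects a common sequence for every $Y$ --- which must be verified; the paper does this by ordering walks first by $\t(X)$ and then by $f(X)$ along $\le_P$, noting that $\t(X,Y)$ changes only when $f(X)$ enters or exits $\wh N_P(Y)$, giving a common sequence of length at most $2i(j+1)\le(3j)^2$. This controls the mass lost per step (whence the constant $(3j)^{-4j}$), and Lemma~\ref{lem:weightmix} is then applied to the compatible families $\XX_{2j},\YY_{2j}$, not to all of $\RR_{2j}$ as your order of operations suggests. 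Second, minor points: since $\|M\|=o(1)$ the walk essentially never hits the single vertex $x$, so your one-sided case $\pi(\BB_{2i-1})=\OM(1)$ in fact forces $|\ol U|=\OM(n)$ and is realized already at $r=1$; and the paper uses the deterministic rotation walk $\rot_{P,y}$ rather than the simple lazy walk, since with the lazy walk only one of the choices at each even step is the correct rotation and one must restrict to the subfamily of genuine rotation walks in any case.
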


\begin{proof}
Lemma~\ref{lem:cycles} takes care of the case when $\FF_2(G)$ contains a cycle; it only remains to note that the set $E_\th$ of edges incident to $S_\th$ has $R[E_\th] = o(nd)$ by the degree condition \eqref{eq:powerlaw}. Assume $\FF_2(G)$ contains no cycles.

For a graph $G$ and integer $r\ge 0$, let $\BW_{2r-1}^\th(G)$ be the family of walks $W$ of length $2r-1$ such that (a) $W$ is $P$-alternating for some $P\in \FF_2(G)$, (b) $W$ is strictly $G$-alternating, (c) no vertex appears more than once in $W$, (d) $W$ avoids $S_\th$, and (e) $s_2(P\triangle W) > s_2(P)$. By \eqref{eq:Galt}, it is enough to show that $R_\alt[\BW_{2r-1}^\th(G)] = \OM(nd^r)$ for some $r\le 2\mix(R) + 1$.

Suppose $P$ is a path from $x$ to $y$ on vertex set $U$. Define $\RR_i(P, y)$ as in Section~\ref{sec:rotations}. For $i\ge 0$, let $\BB_{2i+1}(P, y)$ be the set of walks $(w_0,\dots,w_{2i+1})$ with $(w_0,\dots,w_{2i})\in \RR_{2i}$ and $w_{2i+1} \in \{x\} \cup \ol U$, and note that $s_2(P\triangle W) > s_2(P)$ for $W\in \BB_{2i+1}$. 

We define $\rot_{P, y}$ as a random $(R, P)$-alternating walk initated at $y$ with
\al{
  \rot_{P, y}(r_P(Y) \mid Y) & = 1, \quad Y\in \RR_{2i+1}(P, y).
}
This satisfies
\al{
  \rot_{P, y}(\RR_{2i+2}) & = \rot_{P, y}(\RR_{2i+1}), \\
  \rot_{P, y}(\RR_{2i+1}\cup \BB_{2i+1}) & \ge (1-o(1)) \rot_{P, y}(\RR_{2i}).
}
Indeed, if $W = (w_0,\dots,w_{2i+1})\notin \RR_{2i+1}\cup \BB_{2i+1}$ while $(w_0,\dots,w_{2i}) \in \RR_{2i}$, then $\dist_P(w_{2i+1}, \{w_0,\dots,w_{2i}, x\}) < 2$, which happens with probability $o(1)$. We conclude that for any constant $j\ge 1$,
\begin{equation}\label{eq:ABpart}
\rot_{P, y}(\RR_{2j}) + \sum_{i = 0}^{j-1} \rot_{P, y}(\BB_{2i+1}) = 1-o(1).
\end{equation}
For each $(x, y) \in \EP_2(U)$ pick some $P(x, y)\in \FF_2(G)$ on $U$, with $P(x, y)$ and $P(y, x)$ each other's reverses. Define a random $(R, G)$-alternating walk
$$
\rot = \sum_{(x, y)\in \EP_2(U)} \frac{1}{|\EP_2(U)|} \rot_{P(x, y), y}.
$$
Since $|\EP_2(U)| = \OM(n^2)$, this is near-uniform. Let $j = \mix(R)$, and pick $\th \le \lam(R)^{-1/4}$ so that $\th^{4j+1}\|M\| = o(1)$.

{\bf The one-sided case.} Suppose $s_2(G) = n-\OM(n)$. Then $|\ol U| = \OM(n)$, and $\{\t(\ol U) \le 1\} \subseteq \BB_1$. By Lemma~\ref{lem:hitlargeset}, we have
$$
\rot(\BB_{1}\setminus \DD_1^\th) = \OM(1).
$$
Since $\rot$ is near-uniform, \eqref{eq:mixpitoR} shows that $R_\alt[\BB_1\setminus \DD_1^\th] = \OM(nd)$. Since $\BB_1 \setminus \DD_1^\th \subseteq \BW_1^\th(G)$, that finishes this case.

{\bf The two-sided case.} Suppose $s_2(G) = n-o(n)$. Then $\ol U = o(n)$, and Lemma~\ref{lem:hitlargeset} gives $\rot(\t(\ol U\cup S_\th) \le 2j) = o(1)$, so $\rot(\RR_{2j}) = 1-o(1)$. Then there exists a set $\XY\subseteq \EP_2$ such that all $(x, y)\in \XY$ have $\rot_{P(x, y), x}(\RR_{2j}^\th) = 1-o(1)$ and $\rot_{P(x, y), y}(\RR_{2j}^\th) = 1-o(1)$.

Fix $(x, y)\in \XY$ and let $P = P(x, y)$, directed from $x$ to $y$. We will construct a compatible pair $(\XX_{2j}, \YY_{2j})$ of families of walks of length $2j$. Then $\XX_{2j}\circ \YY_{2j} \subseteq \BB_{4j+1}(P, y)$, and we can apply the same techniques that we used for matchings.

{\bf Construction of $\XX_i, \YY_i$.} Initially let $\XX_0 = \{(x)\}$. Let $\t_0$ be a permutation such that $\rot_{P, y}(\t((x), Y) = \t_0) \ge \rot_{P, y}(\RR_{2j}(P, y)) / 2^j$, and let $\YY_0$ be the set of $Y\in \RR_{2j}(P, y)$ with $\t((x), Y) = \t_0$. For $i = 1,\dots,2j$ we inductively construct compatible families $\XX_i, \YY_i$ with $\rot_{P, x}(\XX_i) \ge c_{i, j}$ and $\rot_{P, y}(\YY_i) \ge c_{i, j}$, where $c_{i, j} = (3j)^{-2i}$ for $i = 1,\dots,2j$.

{\bf Construction, odd $i$.} Suppose $(\XX_{i-1}, \YY_{i-1})$ is a compatible pair of families. Define $\XX_i'$ as the set of one-step extensions of $\XX_{i-1}$:
$$
\XX_i' = \{(x_0,\dots,x_{i-1}, v) : (x_0,\dots,x_{i-1})\in \XX_{i-1}, v \in V\}.
$$
We aim to apply Lemma~\ref{lem:observation} to $\t(X, Y)$ for $(X, Y)\in \XX_i' \times \YY_{i-1}$. In order to do this, we need to define a total ordering on $\XX_i'$ such that $X\mapsto \t(X, Y)$ respects some common sequence for all $Y\in \YY_{i-1}$.

Let $\le_\t$ be some arbitrary ordering of the set $\t(\XX_i') = \{\t(X) : X\in \XX_i'\}$. Note that $|\t(\XX_i')| \le i+1$. Define a total ordering $\le$ on $\XX_i'$ such that $X_1 \le X_2$ whenever $\t(X_1) \le_\t \t(X_2)$, or $\t(X_1) = \t(X_2)$ and $f(X_1) \le_P f(X_2)$.

Let $\t' \in \t(\XX_i')$ and consider the set $\XX_i'(\t')$ of $X\in \XX_i'$ with $\t(X) = \t'$. Restricted to this set, our ordering orders walks by their final vertex. Suppose $X_1, X_2\in \XX_i'(\t')$ and $Y\in \YY_{i-1}$ are such that $f(X_1) \le_P f(X_2)$ with no vertex of $Y$ in the interval $[f(X_1), f(X_2)]$ (note that no vertex of $X_1\cup X_2$ is in this interval as then $\t(X_1)\ne \t(X_2)$). Then $\t(X_1, Y) = \t(X_2, Y)$. 

As $X$ runs through $\XX_i'(\t')$ according to the ordering $\le$, the value of $\t(X, Y)$ changes only when $f(X)$ enters or exits $\wh N_P(Y)$, which happens at most $2j + 1$ times. This shows that the map $X\mapsto (X, Y)$, restricted to $\XX_i'(\t')$, respects some sequence of length at most $2j+2$, common to all $Y\in \YY_i$ (see Figure~\ref{fig:tau}). Since $|\t(\XX_i')| \le i+1$, the sequence $X\mapsto \t(X, Y)$ on all of $\XX_i'$ respects some common sequence of length at most $\ell = 2i(j+1) \le (3j)^2$.

We apply Lemma~\ref{lem:observation} with measures $\rot_{P, x}$ and $\rot_{P, y}$ on $\XX_i'$ and $\YY_{i-1}$, respectively. The lemma asserts that there exist sets $\XX_i \subseteq \XX_i'$ and $\YY_i\subseteq \YY_{i-1}$ such that $\t$ is constant on $\XX_i\times \YY_i$, and
$$
\rot_{P, x}(\XX_i) \ge \frac{\rot_{P, x}(\XX_{i-1})}{\ell}\quad \AND\quad \rot_{P, y}(\YY_i) \ge \frac{\rot_{P, y}(\YY_{i-1})}{\ell}.
$$
By induction, both quantities are at least $c_{i-1, j} / \ell \ge c_{i,j}$. Note that if $\t(X, Y) = \perp$ then either $X\in \BB_i(P\triangle Y, x)$, or $\dist_P(f(X), Y) < 2$. The latter has probability $O(\|M\||Y|) = o(1)$. Since $(x, y)\in \XY$, for any $Y\in\YY_{i-1}$ we then have
$$
\rot_{P, x}(\t(\ \cdot\ , Y) = \perp) \le \rot_{P, x}(\t(\ol U) \le i) + o(1) = o(1).
$$
We conclude that the common value of $\t$ on $\XX_i\times \YY_i$ is not $\perp$, so $(\XX_i, \YY_i)$ is compatible.

{\bf Construction, even $i$.} Suppose $\XX_{i-1}, \YY_{i-1}$ have been constructed. Lemma~\ref{lem:orient} shows that $\XX_i = r_{P, \YY_{i-1}}(\XX_{i-1})$ and $\YY_i = \YY_{i-1}$ are compatible. We have $\rot_{P, x}(\XX_i) = \rot_{P, x}(\XX_{i-1})$.

{\bf Gluing.} We proceed exactly as in \eqref{eq:glue2} to obtain
\al{
  R_\alt[\XX_{2j}\circ \YY_{2j}] & = \sum_{u, v} R_\alt[\XX_{2j}^{\to u}] R(u, v) R_\alt[\YY_{2j}^{\to v}] = \OM\bfrac{d^{2j+1}}{n}.
}
Since $\XX_{2j}\circ \YY_{2j}\setminus (\CC_{4j+1} \cup \DD_{4j+1}^\th) \subseteq \BW_{4j+1}^\th(G)$ for any $(x, y)\in \XY$, summing over $(x, y)\in \XY$ and applying Lemmas~\ref{lem:hitlargeset}~(\ref{item:nohit}) and \ref{lem:improper} gives
$$
R_\alt[\BW_{4j+1}^\th(G)] = \OM(nd^{2j+1}) - o(nd^{2j+1}).
$$
\end{proof}

\section{Sprinkling}\label{sec:sprinkling}

Suppose $X$ is a finite set, $p : X\to [0, 1]$ a function, and $\GG = (X, \EE)$ a graph. Let $X_p \subseteq X$ be the random set obtained by independently including any $x\in X$ with probability $p(x)$. Suppose for some $r \ge 1$ that $\TT$ is a set of paths on $r$ vertices in $\GG$. We are interested in the probability that some $T\in \TT$ is contained in $X_p$.

For a set $A\subseteq X$ write $p(A) = \sum_{x\in A} p(x)$ and $p[A] = \prod_{x\in A} p(x)$, and for a family $\AA$ of sets write $p[\AA] = \sum_{A\in \AA} p[A]$. Let $\D$ be the maximum value of $p(N_\GG(x))$ for $x\in X$.
\begin{lemma}\label{lem:sprinkling}
Suppose $r\ge 1$ is an integer, and $\TT$ is a set of paths on $r$ vertices in $\GG = (X, \EE)$, such that $p[\TT] \ge 6(3\D)^{r-2}p(X)$. Then
\al{
  \Prob{T\nsubseteq X_p, \text{ all $T\in \TT$}} \le \exp\left\{-\frac{p[\TT]}{(3\D)^{r-1}r^r}\right\}.
}
\end{lemma}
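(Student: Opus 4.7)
The plan is to apply the extended form of Janson's inequality to the events $B_T = \{T\subseteq X_p\}$ for $T\in\TT$. Setting $Z = \sum_{T\in\TT}\ind_{B_T}$ gives $\m := \E{Z} = p[\TT]$, and Janson yields
$$
\Prob{Z=0}\le \exp\left\{-\frac{\m^2}{\m+2\Psi}\right\},\qquad \Psi = \sum_{\substack{\{T,T'\}:\ T\ne T'\\ T\cap T'\ne\emptyset}} p[T\cup T'].
$$
The whole task then reduces to showing $\m+2\Psi\le r^r(3\D)^{r-1}\m$, which when combined with the Janson bound gives the lemma.

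The heart of the argument is estimating $\Psi$. For any pair of distinct $T, T'\in\TT$ meeting at some $x$, the inclusion $T'\setminus T\subseteq T'\setminus\{x\}$ yields $p[T\cup T']\le p[T]p[T']/p(x)$. Summing over ordered triples $(T, x, T')$ with $x\in T\cap T'$ (which covers each unordered pair $\{T, T'\}$ at least twice) gives
$$
2\Psi\le \sum_{T\in\TT} p[T]\sum_{x\in T}\sum_{\substack{T'\in\TT,\ T'\ni x\\ T'\ne T}} \frac{p[T']}{p(x)}\le \sum_{T\in\TT} p[T]\sum_{x\in T}\sum_{T'\ni x} p[T'\setminus\{x\}],
$$
where the inner sum runs over all $r$-vertex paths $T'$ in $\GG$ passing through $x$. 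Splitting such a $T'$ by the position $i\in\{1,\dots,r\}$ of $x$ along it and using $p(N_\GG(y))\le \D$ iteratively, the total weight of walks of length $i-1$ extending from $x$ in one direction is at most $\D^{i-1}$, and likewise $\D^{r-i}$ in the other, so the product is $\le \D^{r-1}$. Summing over $i$ yields $r\D^{r-1}$, and summing over $x\in T$ and $T\in\TT$ gives $2\Psi \le r^2\D^{r-1}\m$.

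The hypothesis $p[\TT]\ge 6(3\D)^{r-2}p(X)$ rules out the small-$\D$ regime in which the $\m$ term could dominate the Janson denominator. Applying the same iterative neighborhood bound to the sum $\sum_T p[T]$ over all $r$-vertex paths in $\GG$ yields the crude bound $p[\TT]\le p(X)\D^{r-1}$, so combining with the hypothesis gives $\D\ge 6\cdot 3^{r-2}$, and in particular $\D\ge 1$ for $r\ge 2$. Hence $1+r^2\D^{r-1}\le 2r^2\D^{r-1}$, and the elementary inequality $2r^2\le r^r\cdot 3^{r-1}$ (valid for all $r\ge 2$) gives $\m+2\Psi\le r^r(3\D)^{r-1}\m$, as required. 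The case $r=1$ is immediate because distinct singletons are disjoint, so $\Psi=0$ and the direct calculation $\Prob{Z=0}=\prod_{\{x\}\in\TT}(1-p(x))\le \exp\{-\m\}$ already matches the stated bound.

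The only real obstacle is bookkeeping: tracking the factor-of-two between ordered and unordered pairs in Janson, checking that $r^r(3\D)^{r-1}$ comfortably absorbs $1+r^2\D^{r-1}$, and using the hypothesis on $p[\TT]/p(X)$ to exclude precisely the regime where $\D$ is too small for the $r^2\D^{r-1}\m$ term to dominate the Janson denominator.
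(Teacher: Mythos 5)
Your proof is correct, but it takes a genuinely different route from the paper. The paper proves Lemma~\ref{lem:sprinkling} by a multi-round exposure argument: it splits $X_p$ into $r$ independent rounds of intensity $p/r$, and Lemma~\ref{lem:PQ} (via the greedy selection of ``$S$-useful'' elements in Claim~\ref{cl:useful} plus a Chernoff bound) shows that after each round the surviving suffix-path weight $p[\TT_i]$ drops by at most a factor $3\D$, except with the stated exponentially small probability; after $r-1$ rounds only singletons remain. You instead apply Janson's inequality directly to the events $\{T\subseteq X_p\}$, and the whole burden shifts to the overlap sum, which you control by the same iterated-neighbourhood bound ($p(N_\GG(y))\le\D$ at each step of a path) that implicitly drives the paper's argument; your computation $2\Psi\le r^2\D^{r-1}p[\TT]$ and the deduction $\D\ge 6\cdot 3^{r-2}$ from the hypothesis together with $p[\TT]\le p(X)\D^{r-1}$ are all sound, as is the separate treatment of $r=1$. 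Your route is shorter and uses the hypothesis $p[\TT]\ge 6(3\D)^{r-2}p(X)$ only to rule out small $\D$, whereas the paper uses it essentially inside Claim~\ref{cl:useful}; the paper's approach, on the other hand, is self-contained and does not import a correlation inequality. One minor caveat: you should pin down the exact form of Janson's inequality you invoke, since some standard references give only the weaker $\Prob{Z=0}\le\exp\{-\mu^2/(2(\mu+2\Psi))\}$. This does not endanger your proof --- even with the extra factor $2$ one checks directly that $2(1+r^2\D^{r-1})\le r^r(3\D)^{r-1}$ using $\D\ge 6\cdot 3^{r-2}\ge 1$ (for $r=2$ this reads $2+8\D\le 12\D$) --- but as written the final arithmetic leans on the sharper constant, so either cite a reference for it or insert the line above.
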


We begin our proof with the following lemma.
\begin{lemma}\label{lem:PQ}
Suppose $r > 1$. Let $\TT_1$ be the random set of paths $(x_2,\dots,x_r)$ such that $(x_1,\dots,x_r)\in \TT$ for some $x_1\in X_p$. If $p[\TT] \ge 6\D^{r-2}p(X)$, then
\al{
  \Prob{p[\TT_1] < \frac{p[\TT]}{3\D}} \le \exp\left\{-\frac{p[\TT]}{3\D^{r-1}}\right\}.
}
\end{lemma}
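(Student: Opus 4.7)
The plan is a standard first- and second-moment concentration argument. Let $Y = p[\TT_1]$; for each $(r{-}1)$-path $\vec{x}=(x_2,\ldots,x_r)$ in $\GG$, set $A(\vec{x}) = \{y\in X : (y,\vec{x})\in \TT\}$, $q(\vec{x}) = p(A(\vec{x}))$, and $\EE_{\vec{x}} = \{X_p\cap A(\vec{x})\ne\emptyset\}$. Since $A(\vec{x})\subseteq N_\GG(x_2)$ we have $q(\vec{x})\le \D$, and reorganizing the sum defining $p[\TT]$ gives the two identities $Y = \sum_{\vec{x}} p[\vec{x}]\ind[\EE_{\vec{x}}]$ and $\sum_{\vec{x}} p[\vec{x}] q(\vec{x}) = p[\TT]$.

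First I would lower-bound $\E{Y}$ via the elementary inequality $1-\prod_{y\in A}(1-p(y)) \ge p(A)/(1+p(A)) \ge q(\vec{x})/(1+\D)$, which yields $\E{Y}\ge p[\TT]/(1+\D) \ge p[\TT]/(2\D)$. Next I would upper-bound $\Var{Y}$: each event $\EE_{\vec{x}}$ is monotone increasing in $X_p$, and the decomposition $\Prob{\EE_{\vec{x}}\cap \EE_{\vec{x}'}} \le \Prob{X_p\cap A(\vec{x})\cap A(\vec{x}')\ne\emptyset} + \Prob{\EE_{\vec{x}}}\Prob{\EE_{\vec{x}'}}$ (split according to whether $X_p$ meets $A(\vec{x})\cap A(\vec{x}')$) bounds each pairwise covariance by $p(A(\vec{x})\cap A(\vec{x}'))$. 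Summing and exchanging the order of summation reduces this to $\Var{Y}\le \sum_y p(y) W(y)^2$, where $W(y)=\sum_{\vec{x}:(y,\vec{x})\in\TT} p[\vec{x}] \le \D^{r-1}$; together with $\sum_y p(y)W(y)=p[\TT]$, this gives $\Var{Y}\le \D^{r-1}\, p[\TT]$.

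Finally, I would combine these moment bounds with a lower-tail concentration inequality, for instance Janson's weighted lower-tail bound, or a Bernstein-type bound applied to the Doob martingale of $Y$ viewed as a monotone function of the independent Bernoullis $\ind[y\in X_p]$ (with increments bounded by $W(y)\le \D^{r-1}$), to conclude $\Prob{Y\le p[\TT]/(3\D)}\le \exp\{-c\, p[\TT]/\D^{r-1}\}$. The hypothesis $p[\TT]\ge 6\D^{r-2}p(X)$ is used precisely to ensure the variance term dominates the Bernstein denominator so that the $\D$-exponent does not degrade and the probability bound is meaningful.

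The main obstacle is extracting exactly the power $\D^{r-1}$ rather than $\D^{r+1}$: with only the uniform bounds $W(y)\le \D^{r-1}$ and $\Var{Y}\le \D^{r-1}p[\TT]$, a direct Bernstein application delivers $\exp\{-p[\TT]/\D^{r+1}\}$, losing two factors of $\D$. Closing this gap appears to require splitting tails by $q(\vec{x})$: when $q(\vec{x})$ is small the indicators $\ind[\EE_{\vec{x}}]$ behave almost independently, so a Chernoff bound on the auxiliary sum $Z=\sum_{y\in X_p} W(y)$ together with a Markov bound on the overcounting $Z-Y = \sum_{\vec{x}}p[\vec{x}](\alpha_{\vec{x}}-1)_+$ (where $\alpha_{\vec{x}} = |X_p\cap A(\vec{x})|$) suffices; when $q(\vec{x})$ is large, $\Prob{\EE_{\vec{x}}}$ is near $1$ and the true covariance shrinks exponentially in $q(\vec{x})$. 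The hypothesis on $p[\TT]/p(X)$ is calibrated so that these two regimes combine without losing the extra factors of $\D$.
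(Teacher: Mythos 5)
You have correctly identified the obstacle but not resolved it. Your first two steps are fine: $\E{Y}\ge p[\TT]/(2\D)$ and $\Var{Y}\le \D^{r-1}\,p[\TT]$ follow as you say, and the covariance bound via splitting on whether $X_p$ meets $A(\vec{x})\cap A(\vec{x}')$ is a legitimate way to get $\Var{Y}\le\sum_y p(y)W(y)^2$. But, as you acknowledge, the Lipschitz constants $W(y)\le\D^{r-1}$ are of the same order as the variance divided by $p[\TT]$, so any Bernstein/Azuma/martingale bound applied to $Y$ yields $\exp\{-\OM(p[\TT]/\D^{r+1})\}$, losing a factor $\D^2$. Janson's inequality does not rescue this either: the events $\EE_{\vec x}$ are increasing functions of the independent Bernoullis rather than monomial indicators, and the relevant correlation sum is of the same order as the variance. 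Your proposed fix --- a Chernoff bound on the auxiliary linear statistic $Z=\sum_{y\in X_p}W(y)$ combined with a Markov bound on the overcount $Z-Y=\sum_{\vec x}p[\vec x](\alpha_{\vec x}-1)_+$, together with a separate argument in the large-$q(\vec x)$ regime --- is not worked out, and the central difficulty is real: when $q(\vec x)$ is of order $\D$, $\E{(\alpha_{\vec x}-1)_+}$ is of the same order as $q(\vec x)$, so $\E{Z-Y}$ is of the same order as $\E{Z}=p[\TT]$ and a first-moment bound on the overcount gives nothing. The two-regime decomposition would need to be calibrated quite carefully and its completion is not obvious from the sketch.

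The paper avoids the second-moment method entirely. Define $\QQ(x)$ as the set of $(r{-}1)$-suffixes of paths in $\TT$ beginning at $x$, call $x$ an \emph{$S$-useful} element if $p[\QQ(x)\setminus\QQ(S)]\ge p[\TT]/(3p(X))$, and prove a deterministic claim: while the covered mass $p[\QQ(S)]$ stays below $p[\TT]/(3\D)$, the set of $S$-useful elements has $p$-measure at least $p[\TT]/(3\D^{r-1})$. One then runs a sequential exposure: greedily select an $S_{i-1}$-useful $x_i$, reveal the Bernoulli $\xi_i=\ind[x_i\in X_p]$, and stop on success (covered mass reaches the target) or on failure (total exposed measure $p(Z_i)$ exceeds $p[\TT]/(3\D^{r-1})$). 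Each success contributes at least $p[\TT]/(3p(X))$ to the covered mass, so failure forces $\sum_i\xi_i<p(X)/\D$, while the exposed expected number of successes is at least $p[\TT]/(3\D^{r-1})\ge 2p(X)/\D$ by the hypothesis on $p[\TT]/p(X)$; a lower-tail Chernoff bound then gives $\exp\{-\OM(p[\TT]/\D^{r-1})\}$ directly. The greedy-usefulness device is precisely what converts the weighted concentration problem into a pure Bernoulli one with the right scale, side-stepping the $\D^2$ loss.
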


\begin{proof}
For $x\in X$ let $\QQ(x)$ be the set of $(x_2,\dots,x_r)$ such that $(x,x_2,\dots,x_r)\in \TT$. For $S\subseteq X$ let $\QQ(S) = \cup_{x\in S} \QQ(x)$. For $S\subseteq X$ and $x\notin S$, say that $x$ is {\em $S$-useful} if
$$
p[\QQ(x) \setminus \QQ(S)] \ge \frac{p[\TT]}{3p(X)}.
$$
\begin{claim}\label{cl:useful}
Suppose $S\subseteq X$ has $p(\QQ(S)) < p[\TT]/3\D$. Then the set $U\subseteq X\setminus S$ of $S$-useful elements has $p(U) \ge p[\TT] / 3\D^{r-1}$. 
\end{claim}

\begin{proof}[Proof of Claim~\ref{cl:useful}]
For $T = (x_2,\dots,x_r)$ let $\QQ^{-1}(T)$ be the set of $x$ such that $(x,x_2,\dots,x_r)\in \TT$. Then $\QQ^{-1}(T) \subseteq N_\GG(x_2)$, and $p(\QQ^{-1}(T)) \le \D$. Let $\MM(S)$ be the set of $(x_1,\dots,x_r)\in \TT$ with $(x_2,\dots,x_r)\notin \QQ(S)$. Then
\begin{equation}\label{eq:MSlo}
p[\MM(S)] \ge p[\TT] - \D p(\QQ(S)) \ge \frac23 p[\TT].
\end{equation}
For any $x\in X$ we have $p[\QQ(x)] \le \D^{r-1}$, so
\al{
  p[\MM(S)] & \le \sum_{x\notin S} p(x) p[\QQ(x)\setminus \QQ(S)] \\
  & \le p(U) \D^{r-1} + p(X\setminus S) \frac13 \frac{p[\TT]}{p(X)} \le p(U)\D^{r-1} + \frac13 p[\TT].\label{eq:MShi}
}
Combining \eqref{eq:MSlo} and \eqref{eq:MShi} gives $p(U) \ge p[\TT]/3\D^{r-1}$.
\end{proof}
Consider sampling $S \subseteq X_p$ by the following procedure.
\begin{enumerate}
\item Initially let $S_0 = \emptyset$ and $Z_0 = \emptyset$. Set $i = 1$.
\item Let $x_i\notin Z_{i-1}$ be an $S_{i-1}$-useful element. Let $Z_i = Z_{i-1}\cup \{x_i\}$, and with probability $p(x_i)$ let $S_i = S_{i-1}\cup\{x_i\}$, otherwise let $S_i = S_{i-1}$.
\item If $p[\QQ(S_i)] \ge p[\TT] / 3\D$, declare \textsc{success} and end the procedure. If $p(Z_i) \ge  p[\TT] / 3\D^{r-1}$, declare \textsc{failure} and end the procedure. Otherwise, increase $i$ by 1 and go to step 2.
\end{enumerate}

By Claim~\ref{cl:useful}, Step 2 can be carried out as long as neither \textsc{success} nor \textsc{failure} has been declared, since then $p(U\setminus Z_{i-1}) > 0$ where $U$ is the set of $S_{i-1}$-useful elements.

Since each $x_i$ is $S_{i-1}$-useful at time of sampling, we have
$$
p[\QQ(S_\ell)] \ge \frac{p[\TT]}{3p(X)} \sum_{i = 1}^\ell \xi_i,
$$
where the $\xi_i$ are independent indicator random variables. Letting $\xi(\ell) = \sum_{i\le \ell} \xi_i$, we have $\E{\xi(\ell)} = p(Z_\ell)$. If \textsc{failure} is declared, there exists some $\ell$ for which $p(Z_\ell) \ge p[\TT] / 3\D^{r-1}$ while $\xi(\ell) < p(X) / \D = o(p(Z_\ell))$. By the Chernoff bound \eqref{eq:chernoff2} we have
\al{
  \Prob{\xi(\ell) < \frac{p(X)}{\D} \AND p(Z_\ell) \ge \frac{p[\TT]}{3\D^{r-1}}} \le \exp\left\{- \frac{p[\TT]}{24\D^{r-1}}\right\}.
}
Since $p[\TT_1] \ge p[\QQ(S_\ell)]$, the lemma follows. 
\end{proof}
We can now prove Lemma~\ref{lem:sprinkling}.
\begin{proof}[Proof of Lemma~\ref{lem:sprinkling}]
If $r = 1$ then $\TT$ is a collection of elements of $X$, and
\begin{equation}\label{eq:ris1}
\Prob{\TT\cap X_p = \emptyset} = \prod_{x\in \TT} (1-p(x)) \le e^{-p[\TT]}.
\end{equation}
Suppose $r > 1$. Let $X_1,\dots,X_r$ be independent random subsets of $X$, each sampling any $x\in X$ with probability $p'(x) = p(x) / r$. Then any $x$ is independently in $X_1\cup\dots \cup X_r$ with probability $1 - (1-p(x)/r)^r \le p(x)$.

Let $\TT_0 = \TT$, and for $0 < i < r$ let $\TT_i$ be the random set of $(x_{i+1},\dots,x_r)$ such that $(x_1,\dots,x_i,x_{i+1},\dots,x_r)\in \TT$ for some $x_j\in X_j$, $1\le j \le i$.

Let $\EE_i$ denote the event that $p'[\TT_i] \ge p'[\TT] / (3\D)^i$. Lemma~\ref{lem:PQ} shows that for $0 < i < r$,
\al{
  \Prob{\ol{\EE_i} \mid \EE_{i-1}} \le\exp\left\{-\frac{1}{3^i\D^{r-1}} p'[\TT]\right\}.
}
We then have
$$
\Prob{\ol{\EE_{r-1}}} \le \sum_{i=1}^{r-1}\Prob{\ol{\EE_i}\mid \EE_{i-1}} \le (r-1) \exp\left\{- \bfrac{1}{3\D}^{r-1} p[\TT] \right\}.
$$
Finally, note that $\TT_{r-1}$ is a set of elements in $X$. Repeating the argument behind \eqref{eq:ris1} gives
$$
\Prob{T\nsubseteq X_p,\text{ all $T\in \TT$} \mid \EE_{r-1}} \le \exp\left\{- \bfrac{1}{3\D}^{r-1} p'[\TT]\right\}.
$$
With $p'[\TT] = p[\TT] / r^r$, this finishes the proof.
\end{proof}

\section{Finishing the high-level proof}

We can now prove Lemma~\ref{lem:thebiglemma}. Suppose $R\in \RM$ and $G\in \SE_i$ with $s_i(G) < \fl{in / 2}$ for some $i\in \{1, 2\}$. Note that $d = \Theta(\ln n)$ since $\g_1(R) = 1$. Let $\th$ tend to infinity arbitrarily slowly, and let $E_\th$ be the set of edges incident to $S_\th = S_\th(R, G)$. Propositions~\ref{prop:augmentingwalks} and \ref{prop:boosterwalk} show, for $i=1, 2$ respectively, that there exists an $r \le 2\mix(R) + 1$ and a set $\TT_r$ of edge sets $T$ with $|T| = r$ and $T\cap (G\cup E_\th) = \emptyset$ such that
$$
R_G[\TT_r] = \sum_{T\in \TT_r} \prod_{uv\in T} R(u, v) = \OM(nd^r).
$$
Suppose $E$ is an edge set with $R(E) = o(n\ln n)$, and that $p$ satisfies $p(u, v) \ge R(u, v) / 3$ for all $\{u, v\}\notin E$. Let $X = \binom{V}{2} \setminus (G \cup E_\th)$. We then have
\al{
  \Prob{s_i(G^p) = s_i(G)} = \Prob{T\nsubseteq X_p, \text{ all $T\in \TT_r$}}.
}
Let $\GG = (X, \EE)$ be the graph on $X$ where $u_1v_1, u_2v_2\in X$ are adjacent if $G$ contains an edge between $\{u_1,v_1\}$ and $\{u_2,v_2\}$. Then
$$
\D = \max_{uv\in X} p(N_\GG(uv)) \le 2\th d.
$$
Let $\TT_r(E)$ be the set of $T\in \TT_r$ which intersect $E$. Picking $\th$ so that $R(E)\th^{r-1} = o(nd)$, we have
$$
R_G[\TT_r(E)] \le R(E)\D^{r-1} = o(nd^r).
$$
It follows that
$$
p[\TT_r] \ge \frac13R_G[\TT_r \setminus \TT_r(E)] = \OM(nd^r).
$$
Note that $p(X) = O(nd)$. Lemma~\ref{lem:sprinkling} then gives
$$
\Prob{T\nsubseteq X_p, \text{ all $T\in \TT_r$}} = \exp\left\{-\OM\bfrac{nd^r}{\D^{r-1}}\right\} = \exp\left\{-\OM\bfrac{nd}{\th^{r-1}}\right\}.
$$
Letting $\th^{r-1} = o(\sqrt{d})$ and recalling that $d = \Theta(\ln n)$ finishes the proof.

\section{Proofs for Section~\ref{sec:altmix}: alternating walks mix}\label{sec:alternatingproofs}

Suppose $G$ is a graph and $R$ a rate matrix on $V$, with associated transition matrix $M$. In Section~\ref{sec:altmix} we defined the simple, lazy $(R, G)$-alternating random walk, which is a special case of the following definition. Recall that $\wh N(A) = A\cup N(A)$.
\begin{definition}
Given a graph $G$, a random walk $\pi$ on $V$ is a {\em random $(R, G)$-alternating walk} if the following hold for all $j \ge 0$:
\al{
  \pi(w_{2j+1} \mid w_0,\dots,w_{2j}) & = M(w_{2j}, w_{2j+1}), \\
  \pi(w_{2j+2} \mid w_0,\dots,w_{2j+1}) & = 0, \quad w_{2j+2}\notin \wh N_G(w_{2j+1}).
}
\end{definition}
In short, a random $(R, G)$-alternating walk alternates between memoryless transitions weighted by $M$, and (lazy) steps restricted to the edges of $G$. We use $\pi_j$ to denote the measure induced by the $j$--th vertex $w_j$, and note that the initial distribution $\pi_0$ may be any distribution on $V$.

Before going into mixing of the random $(R, G)$-alternating walk, we restate and prove Lemma~\ref{lem:improper}. Define $S_{\th}$ as the set of vertices $u$ with $d_G(u) \ge \th$ or $d_R(u) \ge \th d$, and say that a walk avoids $S_\th$ if it contains no vertex of $S_\th$.
\begin{lemma}
Suppose $R\in \RM$ and that $G$ is light-tailed, and let $j = O(1)$. Let $\CC_j^{\th}$ be the set of $G$-alternating walks of length $j$ which avoid $S_{\th}$ and either (a) repeat some vertex, or (b) are not strictly $G$-alternating. If $\th^{j+1}\|M\| = o(1)$ then $R_\alt[\CC_j^{\th}] = o(nd^{\cl{j/2}})$.
\end{lemma}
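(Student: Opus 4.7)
The plan is a union bound over the bad events that cover $\CC_j^\th$: for each pair $0 \le i < k \le j$, the repetition event $E_{i,k} = \{w_i = w_k\}$, and for each even $2c < j$, the illegal $G$-edge event $F_c = \{\{w_{2c}, w_{2c+1}\} \in G\}$. Since $j = O(1)$ this is $O(1)$ events. The baseline bound, computed by processing a $G$-alternating walk of length $j$ avoiding $S_\th$ from left to right, is obtained by a factor $n$ for $w_0$, alternating factors $\sum_v R(w_{2c}, v) = d_R(w_{2c}) \le \th d$ for each $R$-edge, and $|N_G(w_{2c+1})| \le \th$ for each $G$-edge; this gives $\sum_W R_\alt[W] = O(nd^{\cl{j/2}}\th^{j})$. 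The aim is to show each individual bad event contributes at most $O(nd^{\cl{j/2}}\th^{j+1}\|M\|)$.

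The key inequality is $R(u,v) = M(u,v) d_R(u) \le \|M\| d_R(u)$, so restricting an $R$-weighted sum to a specific target saves a factor $\|M\|$, and restricting it to the neighborhood $N_G(u)$ saves a factor $\th\|M\|$. For $F_c$: replace the free $R$-sum at position $2c$ with $\sum_{v\in N_G(w_{2c})} R(w_{2c}, v) \le |N_G(w_{2c})|\cdot \|M\| d_R(w_{2c}) \le \|M\|\th^2 d$, saving $\th\|M\|$ relative to $\th d$. For $E_{i,k}$: write $v_0 = w_i = w_k$ and split the walk at positions $i$ and $k$ into left, middle, and right sub-walks, where the middle must return to $v_0$. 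Depending on parity, either the closing edge of the middle piece (at position $k-1$) is an $R$-edge, in which case $R(w_{k-1}, v_0) \le \|M\|\th d$ is used in place of the free sum $d_R(w_{k-1}) \le \th d$ and saves $\|M\|$; or it is a $G$-edge, in which case the forced $w_{k-1} \in N_G(v_0)$ applied to the $R$-weight of the preceding edge gives $\sum_{w_{k-1}\in N_G(v_0)} R(w_{k-2}, w_{k-1}) \le \|M\|\th^2 d$, saving $\th\|M\|$. In either case, the remaining sums are bounded as in the baseline, and the $d$-exponent works out to $\cl{j/2}$.

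Summing the $O(1)$ bad-event contributions gives $R_\alt[\CC_j^\th] = O(nd^{\cl{j/2}}\th^{j+1}\|M\|) = o(nd^{\cl{j/2}})$ by the hypothesis $\th^{j+1}\|M\| = o(1)$. The main obstacle is the case analysis for repetition events $E_{i,k}$ when $k-i$ is odd: the left and right sub-walks then have mismatched parity at their shared endpoint $v_0$, so the naive gluing into a standard $G$-alternating walk of length $i+(j-k)$ fails. The fix is to bound $\sum_{v_0} L(v_0) R(v_0)$ directly through a matrix product that contains either $A^2$ (two consecutive $R$-matrices, from $i$ odd, $k$ even) or $B^2$ (two consecutive adjacency matrices, from $i$ even, $k$ odd), where $A(u,v) = R(u,v)\ind[u,v\notin S_\th]$ and $B(u,v) = \ind[\{u,v\}\in G]\ind[u,v\notin S_\th]$; in both variants a second $\|M\|$-type estimate such as $R(u,w) R(w,v) \le \|M\|\cdot d_R(u) R(w,v)$ is available at the join. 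Verifying in each of the four parity sub-cases that the exponent of $d$ balances to exactly $\cl{j/2}$, while the $\th$-exponent stays at $j+1$, is the main technical chore of the proof.
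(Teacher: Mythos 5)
Your proposal is correct in substance and would prove the lemma, but it runs the estimate rather differently from the paper. You do a direct union bound over the location of the violation (which pair of indices repeats, or which even step uses a $G$-edge) and then re-run the baseline product $n\cdot(\th d)^{\cl{j/2}}\cdot\th^{\fl{j/2}}$ with one or two factors sharpened via $R(u,v)\le \|M\|\,d_R(u)$. The paper instead makes a single structural observation — any walk in $\CC_j^\th$ must have $w_{2i+1}\in \wh N_G(w_0,\dots,w_{2i})$ for some $i$, which covers repetition and non-strictness simultaneously — bounds the probability of this under the simple lazy $(R,G)$-alternating walk by $O(\th\|M\|)$, and converts $R_\alt$-weight into $\pi_G$-probability at cost $n\th^j d^{\cl{j/2}}$. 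The two arguments lose exactly the same factors ($\th^j$ from the degree caps off $S_\th$, and $\th\|M\|$ at the violating step), so they are quantitatively equivalent; the paper's version is shorter because the measure $\pi_G$ packages the bookkeeping, and because the unified observation avoids your case analysis. On that point: the ``main obstacle'' you identify for $E_{i,k}$ with $k-i$ odd is self-inflicted. If you enumerate the walk left to right rather than splitting into three pieces and gluing, the parity of $i$ is irrelevant; only the parity of $k$ matters, giving exactly two cases. For $k$ odd the forced step contributes $R(w_{k-1},w_i)\le\|M\|\th d$ in place of $\th d$; for $k$ even you combine the forced $G$-step with the preceding $R$-step as $\sum_{w_{k-1}\in N_G(w_i)} R(w_{k-2},w_{k-1})\le \|M\|\th^2 d$ in place of $\th d\cdot\th$ (noting $w_i$ is already fixed since $i\le k-2$). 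In both cases the $d$-exponent is untouched because you never change which steps carry $R$-weight, so the $A^2/B^2$ matrix-product machinery and the four-way parity check are unnecessary.
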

\begin{proof}
If a $G$-alternating walk $W = (w_0,\dots,w_j)$ repeats a vertex or has $\{w_{2i}, w_{2i+1}\}\in G$ for some $i$, there must exist some $i$ such that $w_{2i+1}\in \wh N_G(w_0,\dots,w_{2i})$. If $d_G(w_i) < \th$ for all $i$, then
$$
M(w_{2i}, \wh N_G(w_0,\dots,w_{2i})) \le \|M\| (2i+1)\th,
$$
so $\pi(\CC_j^\th) = O(\th\|M\|)$ for any random $(R, G)$-alternating walk $\pi$. Note that for any walk $W = (w_0,\dots,w_j)$ which avoids $S_\th$,
\begin{multline}
R_\alt[W] \le \prod_{i=0}^{\cl{j/2}-1} \frac{\th d}{d_R(w_{2i})} R(w_{2i}, w_{2i+1}) \prod_{i=0}^{\fl{j/2} - 1} \frac{\th}{d_G(w_{2i-1})+1}\\
\le \th^jd^{\cl{j/2}} \times n\pi_{G}(W),
\end{multline}
where $\pi_{G}$ is the simple, lazy $(R, G)$-alternating walk initiated at a vertex chosen uniformly at random. Since $\th^{j+1}\|M\| = o(1)$, it follows that $R_\alt[\CC_j^\th] = o(nd^{\cl{j/2}})$.

\end{proof}

\subsection{Mixing for the random alternating walk}

The $R$-steps of the $(R, G)$-alternating walk gets $\pi_j$ closer to stationarity by Lemma~\ref{lem:norm}, while the $G$-steps may pull it back. The following lemma bounds the harm done.
\begin{lemma}\label{lem:semirandom}
Suppose $R\in \RM$ with $\lam(R) = \lam$ and stationary distribution $\s$. Suppose $\pi$ is a random $(R, G)$-alternating walk for some graph $G$ with maximum degree $\th - 1$ and $d_G(u) = 0$ whenever $d_R(u) \ge \th d$. Then for $i\ge 0$,
\al{
  \m_\s(\pi_{2i+1}) & \le \lam \m_\s(\pi_{2i}), \label{eq:semirandomodd}\\
  \m_\s(\pi_{2i+2})^2 & \le \th^4\left(\m_\s(\pi_{2i+1})^2 + 1\right).\label{eq:semirandomeven}
}
In particular, if $\lam\th^2 = o(1)$ then for all $i\ge 0$,
\begin{equation}\label{eq:justaddxi}
\m_\s(\pi_{2i+1})^2 \le \lam^{2i}\th^{4i} \m_\s(\pi_1)^2 + O(\lam^2\th^4).
\end{equation}
\end{lemma}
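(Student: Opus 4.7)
I would handle the two inequalities separately, then iterate.

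The odd-step bound \eqref{eq:semirandomodd} is immediate from Lemma~\ref{lem:norm}: by definition of the $(R,G)$-alternating walk, $\pi_{2i+1}(v)=\sum_u \pi_{2i}(u) M(u,v)$, so Lemma~\ref{lem:norm} gives $\mu_\sigma(\pi_{2i+1}) \le \lam\, \mu_\sigma(\pi_{2i})$ directly.

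For the even-step bound \eqref{eq:semirandomeven}, write $Q(u,v)=\pi(w_{2i+2}=v\mid w_{2i+1}=u)$. Then $Q$ is stochastic with $Q(u,v)=0$ for $v\notin \wh N_G(u)$, so by symmetry of $\wh N_G$ each row and each column of $Q$ has at most $\theta$ nonzero entries, each bounded by $1$. Cauchy-Schwarz applied in the column, using $\sum_u Q(u,v)\le \theta$, gives $\pi_{2i+2}(v)^2 \le \theta \sum_u Q(u,v)\, \pi_{2i+1}(u)^2$. Dividing by $\sigma(v)$, summing over $v$, swapping the order of summation, and writing $\pi(u)^2 = \sigma(u)\cdot \pi(u)^2/\sigma(u)$:
\[
\mu_\sigma(\pi_{2i+2})^2 + 1 \le \theta \sum_u \frac{\pi_{2i+1}(u)^2}{\sigma(u)} \sum_v Q(u,v) \frac{\sigma(u)}{\sigma(v)}.
\]
The crucial observation is that $\sum_v Q(u,v)\,\sigma(u)/\sigma(v) \le \theta$. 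Indeed, letting $B := \{w : d_R(w)\ge \theta d\}$: if $u\in B$ then the hypothesis $d_G(u)=0$ forces $Q(u,\cdot)=\delta_u$, so the ratio is $1$; if $u\notin B$ then for any $v\in \wh N_G(u)$ with $v\ne u$ we have $d_G(v)\ge 1$, hence $v\notin B$ by the same hypothesis, so $\sigma(u)/\sigma(v) = d_R(u)/d_R(v) \le (\theta d)/d = \theta$. Plugging back yields $\mu_\sigma(\pi_{2i+2})^2 + 1 \le \theta^2(\mu_\sigma(\pi_{2i+1})^2+1)$, which is stronger than \eqref{eq:semirandomeven}.

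For the iterated bound \eqref{eq:justaddxi}, chaining the two inequalities produces the one-step recursion $\mu_\sigma(\pi_{2i+3})^2 \le \lam^2\theta^4(\mu_\sigma(\pi_{2i+1})^2+1)$. Iterating $i$ times gives a geometric progression in $\lam^2\theta^4$; since $\lam\theta^2 = o(1)$ the series converges and the cumulative constant term is $O(\lam^2\theta^4)$, while the leading term telescopes to $\lam^{2i}\theta^{4i}\mu_\sigma(\pi_1)^2$. The main technical point, which deserves the most care, is the ratio bound $\sigma(u)/\sigma(v)\le \theta$ along an edge of $G$: this uses precisely the hypothesis $d_G(u)=0$ when $d_R(u)\ge \theta d$, since without it a $G$-edge could join a heavy vertex to a light one and $\sigma(u)/\sigma(v)$ could be as large as $bn$, which would destroy the polynomial-in-$\theta$ bound.
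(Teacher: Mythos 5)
Your proposal is correct, and its overall structure is exactly that of the paper: the odd step contracts by $\lam$ via Lemma~\ref{lem:norm}, the even step loses a factor polynomial in $\th$, and the two are chained into the geometric recursion giving \eqref{eq:justaddxi}. The only real difference is how you control the even step. The paper bounds $\pi_{2i+2}(v) \le \th \max_{u\in \wh N_G(v)}\pi_{2i+1}(u)$ and then pays an extra factor $\th$ because each $u$ can realize the maximum for up to $\th$ different $v$, arriving at $\th^4$. You instead apply Cauchy--Schwarz along each column of the transition kernel $Q$ and swap the order of summation, which replaces the max-and-multiplicity count by the exact stochasticity $\sum_v Q(u,v)=1$; combined with the same two structural inputs (each $\wh N_G(v)$ has size at most $\th$, and $\s(u)/\s(v)\le\th$ whenever $Q(u,v)>0$, the latter being precisely where the hypothesis $d_G(u)=0$ for $d_R(u)\ge\th d$ enters), this yields the sharper bound $\m_\s(\pi_{2i+2})^2+1\le \th^2\left(\m_\s(\pi_{2i+1})^2+1\right)$. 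Since the lemma only claims $\th^4$ and is applied with $\th\le\lam^{-1/4}$, the improvement is not needed downstream, but your version is cleaner and would allow a slightly weaker constraint on $\th$.
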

\begin{proof}
Lemma~\ref{lem:norm} immediately  gives \eqref{eq:semirandomodd}, and we prove \eqref{eq:semirandomeven}. Note that for any $i\ge 1$ and $u\in V$,
\al{
  \pi_{2i}(u) & = \sum_{v\in  \wh N(u)} \pi_{2i-1}(v)\pi(w_{2i}=u\mid w_{2i-1}=v) \le \th \max_{v\in \wh N(u)}\pi_{2i-1}(v).
}
Suppose $v\in \wh N(u)$. Then $u = v$ or $d_R(u) \le \th d$, since $d_G(u) = 0$ whenever $d_R(u) \ge \th d$. Since $d_R(v) \ge d$, in either case we conclude that $\s(u) / \s(v)  = d_R(u) / d_R(v) \le \th$, and
\al{
  \m_\s(\pi_{2i})^2  & \le \sum_v \frac{\pi_{2i}(v)^2}{\s(v)}\le \sum_v \frac{\th^2}{\s(v)} \max_{u\in \wh N(v)}\pi_{2i-1}(u)^2.
}
Any vertex $u$ is counted at most $\th$ times in this sum, so
\al{
  \m_\s(\pi_{2i})^2 \le \th^3\sum_u \frac{\pi_{2i-1}(u)^2}{\s(u)}\max_{v\in \wh N(u)}\frac{\s(u)}{\s(v)} \le \th^4 \left(\m_\s(\pi_{2i-1})^2 + 1\right). \label{eq:oddtoeven}
}
This shows \eqref{eq:semirandomeven}. Repeatedly applying \eqref{eq:semirandomodd} and \eqref{eq:semirandomeven} with $\lam\th^2 = o(1)$ gives \eqref{eq:justaddxi}.
\end{proof}

Recall that for a family $\WW$ of walks and a vertex $v$, $\WW^{\to v}$ is the set of walks in $\WW$ ending at $v$. Say that a walk $W = (w_0,\dots,w_j)$ is {\em non-lazy} if $w_{i} \ne w_{i+1}$ for all $0\le i < j$.

For any random $(R, G)$-alternating walk $\pi$ we define a variant $\pi^\th$ by the following holding for any $W = (w_0,\dots,w_{2j-1})$: if $w_{2j-1}\in S_\th$ then $w_{2j} = w_{2j-1}$, and if $w_{2j-1}\notin S_\th$ then
\begin{equation}
\pi^\th(w_{2j}\mid W) = \left\{\begin{array}{ll}
0, & w_{2j}\in S_\th, \\
\pi(w_{2j}\mid W) + \sum_{v\in S_\th} \pi(v\mid W), & w_{2j} = w_{2j-1}, \\
\pi(w_{2j}\mid W), & w_{2j}\notin \{w_{2j-1}\}\cup S_\th.
\end{array}\right.\label{eq:pith}
\end{equation}
Then $\pi^\th$ is a random $(R, G^\th)$-alternating walk, where $G^\th\subseteq G$ is obtained from $G$ by removing any edge incident to $S_\th$. The walk $\pi^\th$ is designed to satisfy the conditions of Lemma~\ref{lem:semirandom} as well as satisfying $\pi^\th(\WW) = \pi(\WW)$ for any family $\WW$ of walks which either (a) are non-lazy and avoid $S_\th$, or (b) have $w_i\notin \wh N(S_\th)$ for odd $i$.

\begin{lemma}\label{lem:RGmix}
Suppose $R \in \RM$ with $\lam(R) = \lam$, suppose $F$ is a graph with maximum degree $\D(F)\le 2$, and suppose $\pi$ is a random $(R, F)$-alternating walk. Suppose $\th \le \lam^{-1/4}$ tends to infinity with $n$, and let $j\ge 2-2\frac{\ln (n\|M\|)}{\ln\lam}$ be an integer. Let $c > 0$ be constant. Suppose $\WW$ is a set of non-lazy $S_\th(R, F)$-avoiding walks of length $2j$, such that $\pi(\WW) \ge c$. Then there exists a constant $\r > 0$ such that there exists a set $|U_{2j}|\ge \r n$, such that any $u\in U_{2j}$ has $\pi(\WW^{\to u}) \ge \r/ n$.
\end{lemma}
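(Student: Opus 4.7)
The plan is to work with the truncated walk $\pi^\th$ from \eqref{eq:pith}, push it close to stationarity in the $L^2(\s^{-1})$ sense via Lemma~\ref{lem:semirandom}, then translate this into a \emph{pointwise} bound $\pi^\th_{2j}(u) = O(1/n)$ holding on a linear-sized set, and finally extract $U_{2j}$ by a trivial pigeonhole. Crucially, we must avoid settling for the naive pointwise bound $O(1/\sqrt n)$ implied by a small $\mu_\s$-value, which would only yield $|U_{2j}| = \Omega(\sqrt n)$.

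First, since the walks in $\WW$ are non-lazy and avoid $S_\th$, the construction \eqref{eq:pith} gives $\pi^\th(\WW) = \pi(\WW) \ge c$, and $\pi^\th$ is an $(R,G^\th)$-alternating walk where $G^\th$ is $F$ with all edges incident to $S_\th$ removed. Thus $\Delta(G^\th) \le \Delta(F) \le 2 < \th-1$ and $d_{G^\th}(v) = 0$ on $S_\th\supseteq \{v:d_R(v)\ge \th d\}$, so Lemma~\ref{lem:semirandom} applies. A crude bound $\m_\s(\pi^\th_1)^2 \le \max_v \pi^\th_1(v)/\s(v) \le bn\|M\|$, together with \eqref{eq:justaddxi} and the choices $\th \le \lam^{-1/4}$ and $j \ge 2 - 2\ln(n\|M\|)/\ln\lam$, yields
$$
\m_\s(\pi^\th_{2j-1})^2 \le (\lam\th^2)^{2(j-1)}\cdot bn\|M\| + O(\lam^2\th^4) \le \lam^{j-1}\cdot bn\|M\| + O(\lam) = o(1),
$$
using $\lam^{j-1} \le \lam/(n\|M\|)^2$ and $\lam = o(1)$.

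Next, define $U_{\mathrm{bad}} = \{v : \pi^\th_{2j-1}(v) > 2\s(v)\}$. For $v \in U_{\mathrm{bad}}$ we have $(\pi^\th_{2j-1}(v)-\s(v))^2/\s(v) \ge \pi^\th_{2j-1}(v)/2$, so summing gives $\pi^\th_{2j-1}(U_{\mathrm{bad}}) \le 2\m_\s(\pi^\th_{2j-1})^2 = o(1)$. Combined with the uniform lower bound $\s(v) \ge 1/(bn)$ from Definition~\ref{def:degrees}(\ref{item:scale}), this yields $|U_{\mathrm{bad}}| = o(n)$. Set $V_0 = V\setminus \wh N_F(U_{\mathrm{bad}})$, so $|V_0| \ge n - 3|U_{\mathrm{bad}}| = n - o(n)$. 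For every $u \in V_0$ and every $v\in \wh N_F(u)$ we have $v\notin U_{\mathrm{bad}}$, hence $\pi^\th_{2j-1}(v) \le 2\s(v) \le 2b/n$, and since at most three terms contribute to the $G^\th$-step,
$$
\pi^\th_{2j}(u) \;\le\; 3\max_{v\in \wh N_F(u)} \pi^\th_{2j-1}(v) \;\le\; 6b/n.
$$

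Finally, set $\pi'(u) = \pi^\th(\WW^{\to u}) \le \pi^\th_{2j}(u)$, so $\pi'(V) \ge c$. The mass outside $V_0$ is controlled by Cauchy-Schwarz and condition~(\ref{item:powerlaw}):
$$
\pi^\th_{2j}(V_0^c) \le \pi^\th_{2j-1}(\wh N_F^2(U_{\mathrm{bad}})) \le \s(\wh N_F^2(U_{\mathrm{bad}})) + \|\pi^\th_{2j-1} - \s\|_{TV} \le b(9|U_{\mathrm{bad}}|/n)^{1-2\a} + o(1) = o(1),
$$
giving $\pi'(V_0) \ge c-o(1)$. Fix $\r \in (0, c/(30b))$. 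Letting $B = \{u : \pi'(u) \ge \r/n\}$, we split $\pi'(V_0) \le |V_0\cap B|\cdot 6b/n + n\cdot \r/n \le |V_0\cap B|\cdot 6b/n + \r$, hence $|V_0\cap B| \ge \Omega(n)$, and we take $U_{2j} = V_0\cap B$.

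The main obstacle is Step 3: the $\mu_\s$-control alone gives only pointwise $O(1/\sqrt n)$ on $\pi^\th_{2j-1}$, so we cannot use it directly; the fix is the Markov-type bound $\pi^\th_{2j-1}(U_{\mathrm{bad}}) \le 2\m_\s^2$, which pushes the "heavy" vertices into a sublinear set and then relies on $\D(F)\le 2$ to prevent that set from expanding under the single remaining $G$-step.
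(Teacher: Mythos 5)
Your proof is correct, and it follows the same overall skeleton as the paper's argument: switch to the truncated walk $\pi^\th$ from \eqref{eq:pith} (using non-laziness and $S_\th$-avoidance to preserve $\pi(\WW)$), apply Lemma~\ref{lem:semirandom} with the choice of $j$ to conclude $\m_\s(\pi^\th_{2j-1})^2 = O(\lam) = o(1)$, and extract a linear-sized set by pigeonhole. Where you diverge is in the final extraction. The paper first produces a linear-sized set $U_{2j-1}$ at time $2j-1$ (using the one-sided inequality $\pi^\th_{2j-1}(A) \le \s(A) + O(\lam^{1/3})$ to show $\s(U_{2j-1}) = \OM(1)$, then Lemma~\ref{lem:sublinear}), and then ``bounces'' one $G$-step forward, claiming each $u\in U_{2j-1}$ contributes a neighbour to $U_{2j}$ and using $\D(F)\le 2$ to bound the multiplicity by $3$. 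You instead work directly at time $2j$: a Markov-type bound shows the set $U_{\mathrm{bad}}$ of vertices where $\pi^\th_{2j-1}$ exceeds $2\s$ is $o(n)$, the bounded degree of $F$ prevents $\wh N_F(U_{\mathrm{bad}})$ from growing beyond $o(n)$, and on the remaining $(1-o(1))n$ vertices you get the pointwise bound $\pi^\th_{2j}(u)\le 6b/n$; the TV-distance bound (via Cauchy--Schwarz from $\m_\s$) controls the $o(1)$ mass that leaks onto $\wh N_F(U_{\mathrm{bad}})$ at time $2j$, and one pigeonhole finishes. This buys you one real advantage: your bounce from $2j-1$ to $2j$ is a pointwise estimate on $\pi^\th_{2j}$ rather than a claim about how the family $\WW$ extends, so it does not rely on extensions of walks in $\WW_{2j-1}$ retaining comparable mass inside $\WW$, a step that the paper states somewhat tersely. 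The two proofs use the same mixing input and yield the same conclusion with comparable constants; yours trades the paper's set $T$ of $(1\pm\lam^{1/3})$-accurate vertices for the coarser but equally effective threshold set $U_{\mathrm{bad}}$.
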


\begin{proof}
Let $S_\th = S_{\th}(R, F)$ and consider the walk $\pi^\th$ defined above. Since $\th \le \lam^{-1/4}$ and $\lam=o(1)$, Lemma~\ref{lem:semirandom} implies that
\begin{equation}\label{eq:downtopi1}
\m_\s(\pi_{2j-1}^\th)^2 \le \lam^{j-1}\m_\s(\pi_1^\th)^2 + O(\lam).
\end{equation}
We have $\pi_1^\th(u) = \sum_v \pi_0^\th(v)M(v, u) \le \|M\|$ for all $u$, and since $\s(u) \ge 1/bn$ for all $u$,
$$
\m_\s(\pi_1)^2 \le \sum_u \frac{\pi_1(u)^2}{\s(u)} \le bn^2\|M\|^2.
$$
So for $j\ge 2 - 2\ln(n\|M\|)/\ln(\lam)$, \eqref{eq:downtopi1} becomes $\m_\s(\pi_{2j-1}^\th)^2 = O(\lam)$. Define $T$ as the set of vertices $u$ with $|\pi_{2j-1}^\th(u) - \s(u)| < \lam^{1/3}\s(u)$. Then by definition of $\m_\s$,
\al{
  O(\lam) = \m_\s(\pi_{2j-1}^\th)^2 \ge \sum_{v\notin T} \left(\frac{\pi_{2j-1}^\th(v)}{\s(v)} - 1\right)^2\s(v) \ge \lam^{2/3}\s(\ol T).
}
Then $\s(T) = 1-O(\lam^{1/3})$. By definition of $T$,
\al{
  \pi_{2j-1}^\th(T) & \ge (1-\lam^{1/3})\s(T) = 1-O(\lam^{1/3}), \label{eq:pi2jT}
}
and for any vertex set $A$,
\begin{equation}\label{eq:Abound}
\pi_{2j-1}^\th(A) \le (1+\lam^{1/3}) \s(A\cap T) + \pi_{2j-1}^\th(\ol T) \le \s(A) + O(\lam^{1/3}).
\end{equation}

Let $\WW_{2j-1}\subseteq \WW$ be the set of walks obtained by removing the final vertex from walks in $\WW$. Note that $c \le \pi(\WW_{2j-1}) = \pi^\th(\WW_{2j-1})$. For any $v\in V$ let $\WW_{2j-1}^{\to v}$ be the set of walks in $\WW_{2j-1}$ which end at $v$, and let $U_{2j-1}$ be the set of $v$ such that $\pi^\th(\WW_{2j-1}^{\to v}) \ge c/2n$. Then
\begin{multline}
c \le \pi^\th(\WW_{2j-1})
= \sum_v \pi^\th(\WW_{2j-1}^{\to v}) \\
\le \frac{c}{2n}|\ol{U_{2j-1}}| + \pi_{2j-1}^\th(U_{2j-1})
\le \frac{c}{2} + \s(U_{2j-1}) + O(\lam^{1/3}).
\end{multline}
Since $\lam = o(1)$, we conclude that $\s(U_{2j-1}) \ge c/3$. By Lemma~\ref{lem:sublinear} we then have $|U_{2j-1}| \ge c' n$ for some constant $c' > 0$. Let
$$
U_{2j} = \left\{v : \pi^\th(\WW^{\to v}) \ge \frac{1}{3}\frac{c}{2n}\right\}.
$$
Since $\D_F\le 2$, every $u\in U_{2j-1}$ has at least one neighbour in $U_{2j}$, counting self-loops, and $|U_{2j}| \ge |U_{2j-1}| / 3 \ge c' n / 3$. Since $\pi(\WW^{\to v}) = \pi^\th(\WW^{\to v})$ for all $v$, this finishes the proof with $\r = \min\{c'/3, c/6\}$.
\end{proof}

\subsection{Hitting times for sets}

For a random walk $(w_0,w_1,\dots)$ and $S\subseteq V$, recall that $\t(S)$ is the smallest $i$ for which $w_i\in S$.
\begin{lemma}\label{lem:badwalks}
Suppose $R\in \RM$ and that $G$ is light-tailed. Suppose $\pi$ is a random $(R, G)$-alternating walk with $\pi_0(u) = 1/|A|$ for $u\in A$, where $A\subseteq V$ has size $|A| = \OM(n)$.
\begin{enumerate}[(i)]
\item If $j\ge 0$ is constant and $|S| = o(n)$, then
$$
\pi(\t(S) \le 2j) = o(1).
$$
\item If $|S| = \OM(n)$ then
$$
\pi(\t(S) \le 1) = \OM(1).
$$
\end{enumerate}
\end{lemma}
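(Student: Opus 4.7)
The plan is to work with the modified walk $\pi^\th$ defined in \eqref{eq:pith} and bound its probability of hitting the enlarged target $T = S \cup S_\th$ using Lemma~\ref{lem:semirandom} and Cauchy--Schwarz. Since $G$ is light-tailed we have $|S_\th| \le |\wh N_G(S_\th)| = o(n)$, hence $|T| = o(n)$, and Lemma~\ref{lem:sublinear} gives $\s(T) = o(1)$. The initial distribution is near-uniform since $|A| = \OM(n)$, so $\m_\s(\pi_0) = O(1)$, and Lemma~\ref{lem:semirandom} then yields $\m_\s(\pi^\th_{2i+1}) = o(1)$ and $\m_\s(\pi^\th_{2i}) = O(\th^2)$ for $i \ge 1$. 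Since for any set $T'$,
\[
  \pi^\th_i(T') \;\le\; \sqrt{\s(T')\bigl(1 + \m_\s(\pi^\th_i)^2\bigr)} \;=\; O\!\left(\th^2 \sqrt{\s(T')}\right),
\]
choosing $\th\to\infty$ slowly enough that $\th^4 \s(T) = o(1)$ (possible because $\s(T) = o(1)$) gives $\pi^\th_i(T) = o(1)$ for every $i \le 2j$, and a union bound yields $\pi^\th(\t(T) \le 2j) = o(1)$.

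\noindent\textbf{Transfer to $\pi$.}
I would couple $\pi$ and $\pi^\th$ by sharing the initial vertex and all $M$-steps, and matching $G$-steps whenever the $\pi$-step does not land in $S_\th$. Under this coupling the two trajectories coincide on $[0, \t^*_\pi - 1]$, where $\t^*_\pi := \t_\pi(S_\th)$, and so
\[
  \pi(\t(S) \le 2j) \;\le\; \pi^\th(\t(S) \le 2j) + \pi(\t^*_\pi \le 2j) \;=\; o(1) + \pi(\t^*_\pi \le 2j).
\]
The residual term I would control by splitting on the parity of $\t^*_\pi$. If $\t^*_\pi$ is odd, the first visit to $S_\th$ is made along a shared $M$-step, so $\pi^\th$ enters $S_\th$ at the same time and the event is contained in $\{\t_{\pi^\th}(S_\th) \le 2j\}$, which has probability $o(1)$ by the first paragraph applied with $T = S_\th$. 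If $\t^*_\pi = i$ is even, the walks still agree at step $i-1$, so $w^{\pi^\th}_{i-1} = w^{\pi}_{i-1} \in \wh N_G(S_\th)\setminus S_\th$, giving $\pi(\t^*_\pi = i) \le \pi^\th_{i-1}(\wh N_G(S_\th))$; the Cauchy--Schwarz bound above is again $o(1)$ because $|\wh N_G(S_\th)| = o(n)$. Summing over the constantly many $i \le 2j$ completes part~(i).

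\noindent\textbf{Plan for (ii) and the main obstacle.}
For part~(ii), combining $|S| = \OM(n)$ with Definition~\ref{def:degrees}\,(\ref{item:scale}) gives $\s(S) \ge |S|/(bn) = \OM(1)$, and a single application of Lemma~\ref{lem:EML} to the first $M$-step yields
\[
  \pi(\t(S) \le 1) \;\ge\; \pi_1(S) \;=\; \frac{M(A, S)}{|A|} \;=\; \s(S) + O\!\left(\lam\sqrt{\frac{n\s(S)}{|A|}}\right) \;=\; \OM(1).
\]
The main obstacle is in part~(i): Lemma~\ref{lem:semirandom} only controls the modified walk $\pi^\th$, while an adversarial $G$-step in the original walk $\pi$ can create highly concentrated intermediate distributions, so $\m_\s(\pi_i)$ need not be small. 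The coupling reduces $\pi$ to $\pi^\th$, but a direct bound on $\pi(\t_\pi(S_\th) \le 2j)$ is circular; the parity split sidesteps this because any even first visit to $S_\th$ must originate in the small set $\wh N_G(S_\th)$, which $\pi^\th$ already avoids with probability $1 - o(1)$.
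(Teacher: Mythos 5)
Your overall architecture for (i) — pass to the truncated walk $\pi^\th$, bound hitting probabilities via $\m_\s$ and Cauchy--Schwarz, then transfer back to $\pi$ by a coupling and a parity split — is sound, and part (ii) is correct. But there is a genuine gap in the first paragraph: you require $\th^4\s(T)=o(1)$ with $T=S\cup S_\th$, justified by ``$\s(T)=o(1)$''. This is circular, because $T$ contains $S_\th$, which \emph{grows} as $\th$ is taken to grow more slowly. The light-tail hypothesis only guarantees $\s(S_{\th_n})\to 0$ along every sequence $\th_n\to\infty$; it is consistent with, say, $\s(S_\th)\asymp 1/\ln\th$, in which case $\th^4\s(S_\th)\to\infty$ for \emph{every} choice of $\th\to\infty$, and no admissible $\th$ exists. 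The factor $\th^2$ in your even-step bound $\pi^\th_{2i}(T')=O(\th^2\sqrt{\s(T')})$ comes from $\m_\s(\pi^\th_{2i})=O(\th^2)$ (Lemma~\ref{lem:semirandom}), so this loss is unavoidable at even times, and your choice of $\th$ cannot simultaneously beat it for a target set that itself depends on $\th$.

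The gap is repairable, and the repair is essentially the idea the paper uses to avoid the problem altogether. By the construction \eqref{eq:pith}, $\pi^\th$ assigns probability $0$ to entering $S_\th$ at a positive even step from outside $S_\th$; hence $\pi^\th$ can first hit $S_\th$ only at time $0$ or at an odd time, where $\m_\s(\pi^\th_{2i+1})=o(1)$ and the bound $O(\sqrt{\s(S_\th)})=o(1)$ carries no $\th$ factor. The $\th^2$-losing even-time bound is then only ever needed for the fixed set $S$ (and your parity split already evaluates $\wh N_G(S_\th)$ only at odd times), so $\th$ need only satisfy $\th^4\s(S)=o(1)$, which is a legitimate choice since $S$ does not depend on $\th$. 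For comparison, the paper sidesteps even times entirely by observing that $\t(S)=0$ or $\t(S)\ge \t_\odd(\wh N(S))$ (any entry into $S$, whether by an $M$-step or a $G$-step, forces an odd-time visit to $\wh N(S)$), and it enlarges $S$ to contain $S_\th$ so that $\pi$ and $\pi^\th$ agree on the relevant event without an explicit coupling; your coupling-plus-parity argument is a workable, if more laborious, substitute once the choice of $\th$ is corrected.
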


\begin{proof}
We prove (i). Let $\th = o(n/|S|)$ tend to infinity with $n$. We may assume that $S_\th\subseteq S$, since replacing $S$ by $S\cup S_\th$ only increases the probability in question. Note that since $G$ is light-tailed,
$$
|\wh N(S)| \le |\wh N(S\setminus S_\th)| + |\wh N(S_\th)| \le \th|S| + o(n) = o(n).
$$
Note that either $\t(S) = 0$ or $\t(S) \ge \t_\odd(\wh N(S))$. We then have
\begin{equation}\label{eq:piz}
\pi(\t(S) \le 2j) \le \pi_0(S) + \pi(\t_\odd(\wh N(S)) < 2j \mid \ol S).
\end{equation}
The first term equals $|S| / |A| = o(1)$.

Let $\pi^\th$ be the modification of $\pi$ defined in \eqref{eq:pith}. Note that since $S_\th \subseteq S$, any walk $W\in \{\t_\odd(\wh N(S)) < 2j\}$ has $\pi^\th(W) = \pi(W)$. Since $\s(u) \ge 1/bn$ for all $u$ for some constant $b\ge 1$,
\al{
  \m_\s(\pi_0^\th)^2 & = \left(\sum_{u\in Z} \frac{(1/|Z|)^2}{\s(u)}\right) - 1 \le \frac{bn}{|Z|} - 1 = O(1).\label{eq:pi0lin}
}
By Lemma~\ref{lem:semirandom} and since $\th\le\lam^{-1/4}$, for any odd $i \ge 1$ we have $\m_\s(\pi_{i}^\th)^2 \le \m_\s(\pi_1^\th)^2 + O(\lam)  = O(\lam)$.
As in \eqref{eq:Abound}, we have $\pi_i^\th(\wh N(S)) \le \s(\wh N(S)) + o(1)$. Then
$$
\pi(\t(S)\le 2j) \le o(1) + \pi^\th(\t_\odd(\wh N(S)) < 2j) \le \sum_{\substack{i < 2j \\ i \text{ odd}}} \pi_i^\th(\wh N(S)) = o(1).
$$

Part (ii) follows from $\m_\s(\pi_1)^2 = O(\lam)$. Applying \eqref{eq:Abound} to the complement of $S$, we have
$$
\pi(\t(S) \le 1) \ge \pi_1(S) \ge \s(S) - o(1).
$$

\end{proof}

\section{A low-degree expander}\label{sec:Hprops}

Recall that $G_{n, R}(t)$ is constructed by letting $E(u, v)$ be independent exponential random variables with rate $R(u, v)$ for all $\{u, v\}$, including any edge $\{u, v\}$ with $E(u, v) \le t$ (note that $E(u, v) = E(v, u)$). Let $D\ge k$ be an integer and define
$$
T_D(u) = \inf\{t > 0 : |\{v : E(u, v) \le t\}| \ge D\}
$$
be the random time at which $u$ attains degree $D$. We define a graph $H(t) \subseteq G_{n, R}(t)$ by including an edge $\{u, v\}$ whenever $E(u, v)\le t$ and $E(u, v) \le \max\{T_D(u), T_D(v)\}$, and let $H = H(\t_k)$.

\begin{lemma}\label{lem:Hprops}
Suppose $R \in \RM$ and $k\ge 1$. There exists some $D = O(1)$ such that the following hold.
\begin{enumerate}[(i)]
\item Let $\th$ tend to infinity with $n$. Letting $S_{\th}$ denote the set of $u$ with $d_H(u) \ge \th$ or $d_R(u) \ge \th d$, with high probability $|\wh N_H(S_{\th})| = o(n)$.
\item There exists a constant $\b = \b(R, k) > 0$ such that with high probability, every $|A| < \b n$ has $|N_{H}(A)| \ge k|A|$.
\end{enumerate}
\end{lemma}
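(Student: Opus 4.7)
The plan is to establish (i) and (ii) via first-moment estimates on $H$, exploiting the fact that each $H$-edge is among the $D$ smallest exponentials at at least one of its endpoints.

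For (i) I would write $S_\th = S_\th^R\cup S_\th^H$ with $S_\th^R = \{u : d_R(u)\ge \th d\}$ and $S_\th^H = \{u : d_H(u)\ge \th\}$. By condition~(\ref{item:scale}) and Markov, $|S_\th^R|\le d_R(V)/(\th d) = O(n/\th)$ deterministically. From Lemma~\ref{lem:exprank} one has $\E{d_H(u)}\le D + 2Dd_R(u)/d$, so $\E{|E(H)|} = O(n)$ and Markov gives $|S_\th^H| = o(n)$ whp; hence $|S_\th| = o(n)$ whp, and Lemma~\ref{lem:sublinear} promotes this to $d_R(S_\th) = o(dn)$ whp. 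To bound $|N_H(S_\th)|$ I split the $H$-edges incident to $S_\th$ by which side was responsible for including them: the $S_\th$-side contributes at most $D|S_\th| = o(n)$ distinct neighbours, and for the other side the expected count of $v$ having any $u \in S_\th$ among $v$'s first $D$ neighbours is at most $\sum_v 2DR(v, S_\th)/d_R(v) = O(d_R(S_\th)/d) = o(n)$ by Lemma~\ref{lem:exprank}, with a standard Chernoff step to accommodate the fact that $S_\th^H$ is random (the non-$S_\th^R$ contribution has exponentially small tail once $d_R(u)\ll \th d$).

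For (ii) I would use a first-moment bound over potentially non-expanding sets. Fix $1\le a\le \b n$. Non-expansion at $A$ with $|A|=a$ means some $B$ with $|B|<ka$ satisfies $N_H(A)\subseteq B$, so for every $u\in A$ the $D$ smallest of $(E(u,v))_{v\ne u}$ lie in $T:=A\cup B$, where $|T|\le (k+1)a$. Via the Poisson/thinning interpretation of the exponentials, the probability of this event for a single $u$ is $(1+o(1))(R(u,T)/d_R(u))^D$. Taking the product over $u\in A$ (ignoring dependence, addressed below) and applying AM--GM together with Lemma~\ref{lem:EML},
\al{
  \prod_{u\in A}\left(\frac{R(u,T)}{d_R(u)}\right)^D \le \left(\frac{M(A,T)}{a}\right)^{Da} \le \left(\s(T) + O\bigl(\lam\sqrt{n\s(T)/a}\bigr)\right)^{Da},
}
and condition~(\ref{item:powerlaw}) gives $\s(T)\le b((k+1)a/n)^{1-2\a}$. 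Multiplying by $\binom{n}{a}\binom{n}{ka}\le (en/a)^{(k+1)a}$, the expected number of non-expanding $A$ of size $a$ is at most $\bigl(C_D\,(a/n)^{D(1-2\a)-(k+1)}\bigr)^a$ for a constant $C_D$. With $D>(k+1)/(1-2\a)$ and $\b$ small enough, this is summable in $a$ with total $o(1)$, so Markov gives~(ii).

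The main obstacle is the dependence between the events $\{\text{first $D$ of $u\subseteq T$}\}$ for distinct $u\in A$, which share the within-$A$ exponentials $E(u,v)$ with $u,v\in A$. I would handle this by conditioning on all within-$A$ edge-times: given them, the events depend only on the mutually independent outside-$A$ exponentials, so the conditional probability factorises; memorylessness together with $\|R\|/d = o(1)$ then shows each conditional per-vertex probability equals its unconditional counterpart up to a factor $1+o(1)$, giving a harmless $(1+o(1))^a$ overhead. A related technicality is that the EML error $\lam\sqrt{n\s(T)/a}$ dominates $\s(T)$ for very small $a$; for such $a$ one instead uses the uniform bound $R(u,T)/d_R(u)\le |T|\|R\|/d \le (k+1)an^{-\g}$ from the condition $\|R\|\le dn^{-\g}$, which beats the binomial factor once $D\g>k+1$.
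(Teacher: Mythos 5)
Your argument for part (i) is a genuinely different route from the paper's. The paper couples $H$ to two copies of the $R$-weighted $D$-out graph, proves an exponential tail bound $\Prob{d_H(u)\ge\ell}\le 2\cdot 5^{-\ell/2}$ for $u$ with $d_R(u)<\ell d/20D$, and then controls $|\wh N_H(S_\th)|$ purely through the degree sequence via $\sum_{\ell\ge\th}(\ell+1)|\{v:d_H(v)=\ell\}|$, which neatly sidesteps the randomness of $S_\th^H$. Your first-moment/Markov version should also go through, but the phrase ``with a standard Chernoff step to accommodate the fact that $S_\th^H$ is random'' is doing real work: $u\in S_\th^H$ is positively correlated with ``$u$ is among $v$'s first $D$,'' so you cannot simply multiply probabilities. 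You would need something like the paper's exponential tail for $d_H(u)$ on $u\notin S^R_{c\th}$ (together with splitting off the deterministic $S^R_{c\th}$) so that the union bound over $u$ pays at most a factor $\th$ against an exponentially small probability. This is fixable, but the key estimate you are leaning on is essentially the paper's \eqref{eq:dHbound}.

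Part (ii), however, has a genuine gap. You claim that if $N_H(A)\subseteq B$ then ``for every $u\in A$ the $D$ smallest of $(E(u,v))_{v\ne u}$ lie in $T:=A\cup B$.'' This is false whenever $u\in\SML(\t_k)$, i.e.\ $d_{\t_k}(u)<D$: since $H=H(\t_k)$, the only edges of $u$ forced into $H$ (and hence into $T$) are those with $E(u,v)\le\t_k$, and there are only $d_{\t_k}(u)<D$ of them. So your per-vertex probability $(R(u,T)/d_R(u))^D$ should really be $(R(u,T)/d_R(u))^{d_{\t_k}(u)}$, and $d_{\t_k}(u)$ can be as small as $k$. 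With exponent $k$ the first-moment bound loses to the entropy factor $(en/a)^{(k+1)a}$ for any $D$. Nor can one cheaply absorb this by inserting $\Prob{u\in\SML}$: that probability is of order $n^{-1+o(1)}$, which is far too large to win against $\binom{n}{a}\binom{n}{ka}$ when $A$ consists of a few $\SML$-vertices. This is precisely the difficulty the paper's proof is built around: it splits $A$ into $A_1=A\cap\SML$ and $A_2=A\setminus\SML$, and handles $A_1$ by a completely separate structural argument (Part~1: whp $\SML$ spans no edge of $H$ and no vertex has two $\SML$-neighbours, so $A_1$ expands because $\d(H)\ge k$), handles $A_2$ via expansion of $H(\infty)$ using the $D$-out coupling (Part~2), and controls the cross-terms $e_H(A_2,\wh N(\SML))$ (Part~3). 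Any first-moment approach along your lines would have to reproduce something like Part~1 to deal with the low-degree vertices before it can get off the ground. Separately, even for $A\cap\SML=\emptyset$, I could not verify that your two regimes (EML main term vs.\ the uniform bound $|T|\|R\|/d$) cover all $1\le a\le\b n$; the crossover happens around $a\approx\lam n$, whereas the uniform bound only survives up to roughly $a\le n^{\g}$, and there is no guarantee from $\RM$ that $\lam n\le n^\g$. This second point may be repairable, but the $\SML$ issue is structural.
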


We prove Lemma~\ref{lem:Hprops} over the next few sections. 

\subsection{$H$ and the $D$-out graph}\label{sec:Dout}

For all ordered pairs $(u, v)$, let $X(u, v)$ be independent exponential random variables with rate $R(u, v) / 2$. Define
\al{
  T_D^+(u) & = \inf\{t > 0 : |\{v : X(u, v) \le t\}| \ge D\}, \\
  T_D^-(v) & = \inf\{t > 0 : |\{u : X(u, v) \le t\}| \ge D\}.
}
Define two undirected graphs on $V$ by
\al{
  H_D^+ & = \{\{u, v\} : X(u, v) \le T_D^+(u)\}, \\
  H_D^- & = \{\{u, v\} : X(u, v) \le T_D^-(v)\}.
}
Then $H_D^+$ and $H_D^-$ are equal in distribution, the common distribution being the $R$-weighted $D$-out random graph $\GG_{R, D}$, defined as follows. Each $u$ independently samples $D$ vertices $N(u)$ chosen without replacement with probability proportional to $R(u, \cdot)$. Let $\vec{\GG}_{R, D}$ be the graph with edges $(u, v)$ for $v\in N(u)$. Then $\GG_{R, D}$ is obtained by ignoring orientations and merging parallel edges in $\vec{\GG}_{R, D}$.

We couple $H_D^+$ and $H_D^-$ to $H$ by letting $E(u, v) = \min\{X(u, v), X(v, u)\}$. Then $H\subseteq H_D^+\cup H_D^-$. Indeed, suppose $\{u, v\}\in H$. If $X(u, v) \le X(v, u)$ then
\al{
  X(u, v) & = E(u, v) \le \max\{T_D(u), T_D(v)\} \le \max\{X_D^+(u), X_D^-(v)\},
}
so $\{u, v\}\in H_D^+\cup H_D^-$. The same argument with the signs reversed holds if $X(v, u) \le X(u, v)$.

\subsection{Lemma~\ref{lem:Hprops} (i): degrees in $H$}

We have $S_\th = A_\th \cup B_\th$ where $A_\th = \{v : d_H(v) \ge \th\}$ and $B_\th = \{u : d_R(u) \ge \th d\}$, and
\al{
  |\wh N(S_{\th})| & \le |\wh N(B_\th\setminus A_\th)| + |\wh N(A_{\th})| \le  \th |B_\th| + |\wh N(A_\th)|.
}
Letting $c = 1/20D$, we bound
\begin{multline}
|\wh N_H(A_\th)| \le \sum_{\ell \ge \th} (\ell+1)|\{v : d_H(v) = \ell\}| = \th|A_\th| + \sum_{\ell \ge \th} |A_\ell| \\
\le \th|A_\th \setminus B_{c\th}| + \th|B_{c\th}| + \sum_{\ell \ge \th}(|A_\ell\setminus B_{c\ell}| + |B_{c\ell}|).
\end{multline}
We bound $|A_\ell \setminus B_{c\ell}|$. The discussion in Section~\ref{sec:Dout} shows that $H\subseteq H_D^+\cup H_D^-$ where $H_D^+ \stackrel{d}{=} H_D^- \stackrel{d}{=} \GG_{R, D}$. Letting $d(u)$ denote degrees in $\GG_{R, D}$,
\al{
  \Prob{d_H(u) \ge \ell} \le 2\Prob{d(u) \ge \ell/2}.
}
Let $X_u$ be the number of vertices $v$ with $u\in N(v)$. If $d(u) \ge \ell/2$ then $X_u \ge \ell/2-D \ge \ell/4$. Vertices $v\ne u$ independently have $u\in N(v)$ with probability at most $DM(v, u)$ by Lemma~\ref{lem:exprank}. Since $M(v, u) = d_R(u)M(u, v)/d_R(v)$, and $d_R(u)/d_R(v) \le \ell/20D$ for $u\notin B_{c\ell}$, we have
$$
\E{X_u} = \sum_{v} \Prob{u\in N(v)} \le D\sum_v M(v, u) \le \frac{\ell}{20}.
$$
By the Chernoff bound~\eqref{eq:chernoff4}, we have
\begin{equation}\label{eq:dHbound}
\Prob{d_H(u) \ge \ell} \le 2\Prob{X_u \ge \frac{\ell}{4}} \le 2\bfrac{1}{5}^{\ell/2},\quad u\notin B_{c\ell}.
\end{equation}
It follows that $\mathbb{E}|A_\ell\setminus B_{c\ell}| = ne^{-\OM(\ell)}$. Since $R\in \RM$, there are constants $b, b_1 > 0$ and $\a\in [0, 1/2)$ such that
\begin{equation}\label{eq:detpowlaw}
\frac{t|B_t|}{b_1n} \le \s(B_t) \le b\bfrac{|B_t|}{n}^{1-2\a}.
\end{equation}
If $\a > 0$ then $|B_t| \le b_2 t^{-\frac{1}{1-\a}} n$ for some $b_2 > 0$. Then 
\al{
  \mathbb{E}|\wh N(S_\th)| & \le \th|B_\th| + \th|B_{c\th}| + \th\mathbb{E}|A_\th| + \sum_{\ell \ge \th} |B_{c\ell}| + \mathbb{E}|A_\ell\setminus B_{c\ell}| \\
  & \le \left(\frac{b_2}{\th^{\frac{\a}{1-\a}}} + \frac{b_2}{(c\th)^{\frac{\a}{1-\a}}} + \th e^{-\OM(\th)} + \sum_{\ell\ge\th}\frac{b_2}{(c\ell)^{\frac{1}{1-\a}}} + e^{-\OM(\ell)} \right)n.
}
For $\th$ tending to infinity, Markov's inequalty shows that $|\wh N(S_\th)| = o(n)$ whp. If $\a = 0$ then~\eqref{eq:detpowlaw} gives $|B_t| = 0$ for any $t$ tending to infinity, and we again conclude that $|\wh N(S_\th)| = o(n)$ whp.

\subsection{Lemma~\ref{lem:Hprops} (ii): expansion in $H$}

Note that the distribution of $H$ is unaffected by scaling $R$, and we may assume that $R$ is scaled so that $\g_1(R) = 1$, and in particular $1-\e \le \t_k \le 1+\e$ whp for any $\e \gg \frac{\ln\ln n}{\ln n}$, by Lemma~\ref{lem:threshold}. Let $\SML$ be the set of vertices $u$ with degree less than $D$ in $G_{n, R}(1-\e)$. For $A\subseteq V$, let $A_1 = A\cap \SML$ and $A_2 = A\setminus \SML$, and note that 
\al{
  |N_{H}(A)| & = |N_H(A_1)\setminus A_2| + |N_H(A_2) \setminus \wh N_H(A_1)| \\
  & \ge |N_H(A_1)| + |N_{H}(A_2)| - |A_2| - e_H(A_2, \wh N(\SML)),
}
We proceed in three parts. Firstly, we show that whp no vertex in $H$ has two neighbours in $\SML$, and that $\SML$ contains no edges, which implies that $|N_H(A_1)| \ge k|A_1|$ since $H$ has minimum degree at least $k$. Secondly, we note that $N_H(A_2) = N_{H(\infty)}(A_2)$ since $A_2\cap\SML = \emptyset$, and show that whp $|N_{H(\infty)}(A)| \ge \frac{D}{16}|A|$ for all $|A| \le \b n$, if $D$ is large enough. Lastly, we show that whp $e_H(u, \wh N(\SML)) \le \frac{D}{17}$ for all $u$, if $D$ is large enough. We conclude that if $D$ is large enough then whp, for all $|A| \le \b n$,
$$
|N_H(A)| \ge k|A_1| + \frac{D}{16}|A_2| - |A_2| - \frac{D}{17}|A_2| \ge k|A|.
$$

\subsubsection{Part 1}

Let $t = 1-\e$. For an edge set $F$, let $\SML_F(t)\subseteq \SML(t)$ be the set of vertices $u$ with degree less than $D$, not counting the edges in $F$. Letting $\TT$ denote the event that $t \le \t_k\le 2$, we have
\al{
  \{u, v\in \SML(t)\}\cap \{F\subseteq H\}\cap \TT \subseteq \{u,v\in \SML_F(t)\} \cap \{F\subseteq G_{n, R}(2)\}.
}
The two events in the right-hand side are independent, and we first use Lemma~\ref{lem:degreebound} to bound
\al{
  \Prob{e_H(\SML(t)) > 0} 
  & \le \Prob{\ol \TT} + \sum_{u, v} \Prob{u, v\in \SML_{\{uv\}}(t)}\Prob{E(u, v) \le 2} \\
  & \le o(1) + 2\|R\|\sum_{u, v}p_\e(u)p_\e(v), \label{eq:ss1}
}
for some $p_\e(u) = e^{-(1-\e)d_R(u) + O(\ln d_R(u))}$. Likewise, the probability that some $w$ has two neighbours in $\SML$ is bounded by
\al{
  4\sum_{u,v,w}p_\e(u)p_\e(v) R(u, w)R(v, w) & \le 4\|R\|\sum_{u, v} p_\e(u)p_\e(v)d_R(v) \\
  & \le \|R\|\sum_{u, v}p_\e(u)p_\e(v), \label{eq:ss2}
}
where $4d_R(v)$ is absorbed into the error term of $p_\e(v)$. We bound $\sum_u p_\e(u)$. Recall that $\g_1(R) = \sum_u e^{-d_R(u)} = 1$ by choice of scaling. Let $U$ be the set of $u$ with $d_R(u) \le 2\ln n$. Then, as $p_\e(u) \le e^{-(1-2\e)d_R(u)}$,
\al{
  \sum_u p_\e(u) & \le e^{4\e\ln n}\sum_{u\in U} e^{-d_R(u)} + |\ol U|e^{-(1-2\e)2\ln n} \le 2n^{4\e}.
}
We have $\|R\|n^{8\e} = o(1)$, and conclude that both \eqref{eq:ss1} and \eqref{eq:ss2} are $o(1)$.

\subsubsection{Part 2}

Let $m\ge 1$ be some integer to be chosen. Consider the $2m$-out graph $\GG_{R, 2m}$. Let $N(u)$ be the $2m$ vertices chosen by $u$, independent for all $u$. Fix some set $A\subseteq V$ with $|A| = a \le \b n$, let $\k = ((m+1)a/n)^c$ with $c > 0$ as in Lemma~\ref{lem:EML}~(\ref{item:kappa}). Note that $\k$ can be made smaller than any positive constant by letting $\b$ be small enough, and we choose $\b$ sufficiently small to allow the Chernoff bounds below.

Consider the following procedure. Initially set $B_0 = A$. For $1 \le i \le 3a/4$ do as follows. Let $u_i\in A \setminus\{u_1,\dots,u_{i-1}\}$ be such that $M(u, B_{i-1}) < \k$. Note that this is possible for $i\le 3a/4$ by Lemma~\ref{lem:EML}~(\ref{item:kappa}) since $A\subseteq B_{i-1}$ and $|B_{i-1}| \le (m+1)a$. Reveal vertices of $N(u_i)$ until (a) at least $m$ vertices not in $B_{i-1}$ have been found, in which case we add those $m$ vertices to $B_{i-1}$ to form $B_i$, or (b) all of $N(u_i)$ has been revealed. Let $X_i = 1$ if (a) occurs and 0 otherwise.

When a vertex of $N(u_i)$ is revealed it has probability at most $2\k$ of being in $B_i$ (with the factor 2 accounting for the choices already made). So, conditional on the procedure so far, the probability that $X_i = 0$ is at most
\al{
  \Prob{\Bin{2m}{2\k} \ge m} \le \bfrac{4\k m}{m}^{m/2},
}
by the Chernoff bound \eqref{eq:chernoff4}. With $p = (4\k)^{m/2}$ we then have
\al{
  \Prob{|N(A)| < \frac{m}{2}|A|} \le \Prob{\sum_{i=1}^{3a/4} X_i < \frac{a}{2}} \le \Prob{\Bin{\frac{3a}{4}}{p} \ge \frac{a}{4}}.
}
Again applying \eqref{eq:chernoff4}, we obtain
\al{
  \Prob{\exists |A| \le \b n : |N(A)| < \frac{m}{2}|A|} & \le \sum_{a\le \b n} \binom{n}{a} \bfrac{3ap/4}{a/4}^{a/8} \\
  & \le \sum_{a\le \b n} \left(\frac{ne}{a} (3p)^{1/8}\right)^a.
}
We have $p^{1/8} = f(m)(a/n)^{cm/16}$ for some function $f(m)$. Choosing $m > 16/c$, and $\b$ small enough, we conclude that this sum is $o(1)$. So $\GG_{R, 2m} \in \EE_{m/2}$ whp, where
$$
\EE_{m/2} = \left\{|N(A)| \ge \frac{m}{2}|A| \text{ for all $|A| \le \b n$}\right\}.
$$

Let $D = 4m$. Condition on the whp events $H_{D/2}^+\in\MM_{m/2}$ and $H_{D/2}^-\in \MM_{m/2}$, and let $|A| \le \b n$. Note that for each $u$, $N_{H(\infty)}(u)$ contains at least one of $N^+(u)$ and $N^-(u)$. Let $A^+$ be the set of $u$ with $N^+(u)\subseteq N_{H(\infty)}(u)$, and suppose without loss of generality that $|A^+| \ge |A|/2$. Then
\al{
  |N_{H(\infty)}(A)| & \ge |N^+(A^+)| \ge \frac{m}{2}|A^+| \ge \frac{D}{16}|A|.
}
For $D$ large enough, we conclude that $H(\infty) \in \EE_{D/16}$ whp.

\subsubsection{Part 3}

Let $t = 1+\e$. Fix $u\in V$ and let $\SML_u\supseteq \SML$ be the set of vertices $v$ with degree less than $D$ in $G_{n, R}(t)$, not counting edges incident to $u$. Let $X(u) = |N_t(u)\cap \wh N_t(\SML_u)|$. If $\t_k\le t$ we then have $e_H(u, \wh N_H(\SML)) \le X(u)$ since $H\subseteq G_{n, R}(t)$. Note that $N_t(u)$ and $\wh N_t(\SML_u)$ are independent. Conditional on $E(v, w)$ for all $v,w\ne u$, the expected value of $X(u)$ is
$$
tR(u, \wh N_t(\SML_u)) \le 2\|R\| |N_t(\SML_u)| \le 2D \|R\||\SML_u|.
$$
Let $\f$ be such that $\|R\|^{\f/2} \le n^{-2}$. The Chernoff bound \eqref{eq:chernoff4} then gives
\al{
  \Prob{X(u) \ge \f\ \middle|\ |\SML_u| = n^{o(1)} } & \le \bfrac{\E{X}}{\f}^{\f/2} \\
  & = O\left((\|R\||N_t(\SML_u)|)^{\f/2}\right) = o(n^{-1}).
}
Since $\t_k \le 1+\e$ whp by Lemma~\ref{lem:threshold}, we conclude that $e_H(u, \wh N_H(\SML)) < \f$ for all $u$ whp.

\section{Proofs for Section~\ref{sec:prelims}}\label{sec:prelimproofs}

\subsection{Degrees}

\begin{proof}[Proof of Lemma~\ref{lem:degreebound}]
Suppose $X = X_1 + \dots + X_n$ where the $X_i$ are independent indicator random variables with $\E{X_i} = 1-e^{-\m_i}$ and $\m = \m_1+\dots+\m_n$, where $\m_i /\m \le \e = o(1)$ for all $i$, and $\m$ tends to infinity with $n$. It is not hard to show that
\al{
  \Prob{X \le \ell} & = \frac{e^{-\m}\m^\ell}{\ell!}\left(1 + O\bfrac{\e}{\m}\right),\label{eq:Pxl}
}
Define $\Po(\m, \ell) = e^{-\m}\m^\ell/\ell!$.

Let $R\in \RM(1)$ and let $t = \OM(1)$. For any vertex $u$ and any set $S\subseteq V$ with $|V\setminus S| = O(1)$, $e_t(u, S)$ satisfies the above with
$$
\E{e_t(u, S)} = tR(u, S) = td_R(u) - o(1).
$$
With $d_R(u)\ge d$ tending to infinity we then have
\al{
  \Prob{e_t(u, S) \le \ell} = (1+o(1))\Po(td_R(u), \ell) = e^{-td_R(u) + O(\ln d_R(u))}.
}
\end{proof}

\begin{proof}[Proof of Lemma~\ref{lem:threshold}]
Let $P\in \RM$ be a matrix with $\g_k(P) \to \g_k\in (0, \infty)$. Note that $d = \min d_P(u) = \Theta(\ln n)$ tends to infinity with $n$. Let $U$ be a set of $\ell$ distinct vertices, let $k > 0$, and let $0 < k_u \le k$ for each $u\in U$, with $\sum_{u\in U}(k-k_u) = 2m$ for some $m\ge 0$. Consider the graph $G_{n, P}$. By~\eqref{eq:Pxl},
\al{
  \Prob{e(u, \ol U) < k_u, \text{ all $u\in U$}} & = \prod_{u\in U} \Prob{e(u, \ol U) < k_u} \\
  & = (1+o(1)) \prod_{u\in U} \Po(d_P(u), k_u-1) \label{eq:exactdegree} \\
  & \le \frac{1+o(1)}{d^{2m}} \prod_{u\in U} \Po(d_P(u), k-1). \label{eq:approxdegree}
}
Let $\EE_m$ be the event that $U$ contains exactly $m$ edges. Then $\EE_m$ is independent of $\{e(u, \ol U) : u\in U\}$, and $\Prob{\EE_m} = O(\|P\|^m)$ and $\Prob{\EE_0} = 1-o(1)$. For $k > 0$ we then have, using both \eqref{eq:exactdegree} and \eqref{eq:approxdegree},
\al{
  \Prob{d(u) < k\ \forall u\in U} & = \sum_m\Prob{\EE_m}\sum_{\sum k_u = k\ell-2m} \Prob{e(u, \ol U) < k_u \forall u\in U} \\
  & \le \left(\prod_{u\in U} \Po(d_P(u), k-1)\right)\left(\Prob{\EE_0} + \sum_{m > 0} O\bfrac{\|P\|^m}{d^{2m}}\right) \\
  & = (1+o(1))\prod_{u\in U}\Po(d_P(u), k-1).
}
Letting $X_k$ denote the number of vertices in $G_{n, P}$ with $d(u) < k$,
\al{
  \E{\binom{X_k}{\ell}} & = \sum_{|U| = \ell} \Prob{d(u) < k, \text{ all $u\in U$}} \\
  & = \sum_{|U| = \ell}(1+o(1))\prod_{u\in U} \Po(d_P(u), k-1) = (1+o(1)) \frac{\g_k(P)^\ell}{\ell!}.
}
If $\g_k(P)$ converges to some $\g_k < \infty$, the method of moments (see e.g.~\cite{Durrett10}) implies that $X_k$ converges to a Poisson random variable with expected value $\g_k$, and 
$$
\lim_{n\to \infty} \Prob{\d(G_{n, P}) \ge k} = \lim_{n\to \infty} \Prob{X_k = 0} = e^{-\g_k}. 
$$
If $\g_k(P)$ diverges to infinity, we note that $\Var{X_k} = o(\E{X_k})$, and Chebyshev's inequality implies that $\Prob{X_k > 0} \to 1$.

To obtain a bound for $\t_k$, let $\e \gg \frac{\ln\ln n}{\ln n}$ and suppose $\g_1(R) = 1$. Note that $G_{n, R}(1+\e) \stackrel{d}{=} G_{n, P}$ with $P(u, v) = 1 - e^{-(1+\e)R(u, v)}$. This matrix has $d_P(u) + O(\ln d_P(u)) \ge (1+\e/2)d_R(u)$ for all $u$, where we use the fact that $\|R\| = o(\e)$. We have
\al{
  \g_k(P) = \sum_u e^{-d_P(u)} d_P(u)^{k-1} & \le \sum_u e^{-(1+\e/2)d_R(u)} \\
  & \le e^{-\e d/2} \g_1(R) = o(1).
}
We conclude that $\d(G_{n, R}(1+\e)) \ge k$ whp. By the same token, $\d(G_{n, R}(1-\e)) < k$ whp.

\end{proof}

\subsection{A matrix lemma}

\begin{proof}[Proof of Lemma~\ref{lem:observation}]
If $\ell = 1$, take $S = I$ and $T = J$. We prove the case $\ell > 1$ by induction. By rescaling, we may assume that $\pi_I(I) = 1$ and $\pi_J(J) = 1$.

For each $j\in J$ let $I_\ell(j)$ be the set of $i\in I$ with $\t(i, j) = a_\ell$. Let
$$
J' = \left\{j\in J : \pi_I(I_\ell(j)) < \frac{1}{\ell}\right\}.
$$
If $\pi_J(J') < 1-\ell^{-1}$, let $S = \cap_{j\notin J'}I_\ell(j)$ and $T = J\setminus J'$. Then $\t = a_\ell$ on $S\times T$ and $\pi_I(S) \ge \ell^{-1}$, $\pi_J(T) \ge \ell^{-1}$.

If $\pi_J(J') \ge 1-\ell^{-1}$, let $I' = \cap_{j\in J'} (I\setminus I_\ell(j))$ and consider the matrix
$$
\t'(i, j) = \t(i, j), \quad i \in I', j\in J'.
$$
This takes values $\{a_1,\dots,a_{\ell-1}\}$, and by induction there exist $S\subseteq I', T\subseteq J'$ with $\pi_I(S) \ge (\ell-1)^{-1}\pi_I(I')$ and $\pi_J(T) \ge (\ell-1)^{-1}\pi_J(J')$ such that $\t'$, and therefore $\t$, is constant on $S\times T$. We have
$$
\pi_I(I') \ge \min_{j\in J'} \pi_I(I\setminus I_\ell(j)) \ge 1-\ell^{-1}, \quad \pi_J(J') \ge  1-\ell^{-1},
$$
so
\al{
  \pi_I(S) & \ge (\ell-1)^{-1}\pi_I^j(I') \ge \ell^{-1}, \\
  \pi_J(T) & \ge (\ell-1)^{-1}\pi_J(J') \ge \ell^{-1}.
}

\end{proof}

\subsection{Mixing in simple random walks}

\begin{proof}[Proof of Lemma~\ref{lem:norm}]
This proof is more or less taken from~\cite{LevinPeres17}, with slight modifications. We first note that for $R\in \RM$, the transition matrix $M$ is reversible:
$$
\s(u)M(u, v) = \frac{d_R(u)}{d_R(V)} \frac{R(u, v)}{d_R(u)} = \frac{d_R(v)}{d_R(V)} \frac{R(v, u)}{d_R(v)} = \s(v)M(v, u).
$$

For vectors $f,g : V\to \R$ we define an inner product
\begin{equation}\label{eq:normdef}
\langle f, g\rangle_\s = \sum_v f(v)g(v)\s(v),
\end{equation}
and the associated norm $\|f\|_\s = \langle f, f\rangle_\s^{1/2}$. Then for probability measures $\pi$,
\begin{equation}\label{eq:mudef}
\left\|\frac{\pi(\cdot)}{\s(\cdot)} - \ind\right\|_\s = \sqrt{\left(\sum_v \frac{\pi(v)^2}{\s(v)}\right) - 1} = \m_\s(\pi),
\end{equation}
where $\ind = (1,1,\dots,1)$.

Let $1 = \lam_1 \ge \lam_2\ge\dots\ge\lam_n\ge -1$ be the eigenvalues of $M$ , with a corresponding eigenbasis $\ind = f_1,\dots,f_n$, orthonormal with respect to the $\langle\cdot, \cdot\rangle_\s$ inner product. Then (see e.g.~\cite{LevinPeres17})
\al{
  \frac{\pi M(v)}{\s(v)} - 1 & = \sum_{j = 2}^n \sum_u \pi(u)f_j(u)f_j(v)\lam_j  \\
  & = \sum_{j=2}^n \lam_jf_j(v)\sum_u\left[\left(\frac{\pi(u)}{\s(u)} - 1\right)f_j(u)\s(u) + f_j(u)\s(u)\right] \\
  & = \sum_{j=2}^n \lam_jf_j(v)\left[\left\langle \frac{\pi}{\s} - \ind, f_j\right\rangle_\s + \langle f_j, \ind\rangle_\s\right].
}
For any $j > 1$, orthonormality implies $\langle f_j, \ind\rangle_\s = 0$. Let $F(j) = \langle \pi/\s-\ind, f_j\rangle_\s$, and note that $\m_\s(\pi)^2 = \sum_{j=1}^n F(j)^2$. Then
\al{
  \m_\s(\pi M)^2 & = \sum_v \s(v)\left(\sum_{j=2}^n \lam_j f_j(v)F(j)\right)^2 \\
  & = \sum_{j\ge 2} \lam_j^2F(j)^2 \|f_j\|_\s^2 + 2\sum_{k > j \ge 2} \lam_j\lam_kF(j)F(k) \langle f_j, f_k\rangle_\s
}
Since the $f_j$ are orthonormal in the $\langle\cdot,\cdot\rangle_\s$ inner product, we are left with
\al{
  \m_\s(\pi M)^2 & = \sum_{j\ge 2} \lam_j^2F(j)^2 \le \lam^2\sum_{j=1}^n F(j)^2 = \lam^2\m_\s(\pi)^2.
}
\end{proof}

\subsection{The expander mixing lemma}

\begin{proof}[Proof of Lemma~\ref{lem:EML}]
For $A\subseteq V$ let $\ind_A : V\to \{0,1\}$ be the indicator for $A$. Let $\pi(u) = \ind_{A} / |A|$. One easily checks that
\al{
  \frac{1}{|A|}M(A, B) - \s(B) = \left\langle\frac{\pi M(\cdot)}{\s(\cdot)} - \ind_V, \ind_B\right\rangle_\s.
}
Cauchy-Schwarz' inequality and Lemma~\ref{lem:norm} then give
\al{
  \left|\frac{1}{|A|}M(A, B) - \s(B)\right|^2 \le \left\|\frac{\pi M}{\s} - \ind_V\right\|_\s^2 \|\ind_B\|_\s^2 \le \lam\m_\s(\pi)^2 \s(B).
}
Since $\m_\s(\pi)^2 \le \sum_{u\in A}\pi(u)^2/\s(u) \le bn/|A|$, \eqref{eq:fullEML} follows.

To see how (i) follows, note that $R(u, v) \ge dM(u, v)$ for all $u, v$. Lemma~\ref{lem:sublinear} gives $\s(B) = \OM(1)$ whenever $|B| = \OM(n)$. Since $\lam(R) = o(1)$, for $|A|, |B| = \OM(n)$ we then have
$$
R(A, B) \ge dM(A, B) \ge d(|A|\s(B) - O(\lam\sqrt{n|A|\s(B)}) = \OM(dn).
$$

For (ii), let $c > 0$ be some constant to be chosen, and for each $|A| \le n/2$ define $A'$ as the set of $u\in A$ with $M(u, A) \ge (|A|/n)^c$. If $|A| < (n^c\|M\|)^{-1/(1-c)}$ then any $u\in A$ has
$$
M(u, A) \le \|M\||A| < \bfrac{|A|}{n}^c,
$$
so $A' = \emptyset$. Suppose $(n^c\|M\|)^{-1/(1-c)} \le |A| \le n/2$. Then \eqref{eq:fullEML} and the power law condition \eqref{eq:powerlaw} give, for some constant $0\le \a < 1/2$,
\begin{multline}
  \bfrac{|A|}{n}^c \le \frac{1}{|A'|} M(A', A) \le \s(A) + \lam\sqrt{\frac{bn\s(A)}{|A'|}} \\
  \le \left[\bfrac{|A|}{n}^{1-2\a-2c} + \lam\sqrt{\frac{b|A|}{|A'|}} \bfrac{n}{|A|}^{\a+2c}\right]\bfrac{|A|}{n}^{2c}.\label{eq:EMLstuff}
\end{multline}
For $2c < 1-2\a$, the first term in square brackets is at most 1. Since $\lam \le (n\|M\|)^{-\a-\g}$ for some constant $\g > 0$, we have for $|A| \ge (n^c\|M\|)^{-1/(1-c)}$ that
\al{
  \bfrac{n}{|A|}^{\a+2c} & \le \bfrac{n}{n^{-\frac{c}{1-c}}\|M\|^{-\frac{1}{1-c}}}^{\a+2c} = (n\|M\|)^{\frac{\a+2c}{1-c}}.
}
We have $n\|M\| \ge 1$ since $M$ is a transition matrix. Since $\lam = o(1)$ and $\lam \le (n\|M\|)^{-\a-\g}$ for some constant $\g > 0$, we conclude that for $c > 0$ small enough, $\lam(n/|A|)^{\a+2c} = o(1)$. From \eqref{eq:EMLstuff} we then have, for $|A| \le n/2$,
\al{
  1 < 2^c \le \bfrac{n}{|A|}^c \le 1 + o\left(\sqrt{\frac{b|A|}{|A'|}}\right).
}
We conclude that $|A'| = o(|A|)$.
\end{proof}

\bibliographystyle{plain}
\bibliography{201222_gnpi}

\end{document}